\newtheorem{theorem}{Theorem}
\newtheorem{lemma}{Lemma}
\newtheorem{remark}{Remark}
\journal{Elsevier}
\begin{document}

\begin{frontmatter}




\title{A non-nested unstructured mesh perspective on highly parallel multilevel smoothed Schwarz preconditioner for linear parametric PDEs}

\author[inst1]{Chengdi Ma}
\ead{mcd2020@stu.pku.edu.cn}

\affiliation[inst1]{organization={School of Mathematical Sciences},
            addressline={Peking University}, 
            city={Beijing},
            postcode={100871}, 
            country={P.R. China}}





\begin{abstract}
The multilevel Schwarz preconditioner is one of the most popular parallel preconditioners for enhancing convergence and improving parallel efficiency. However, its parallel implementation on arbitrary unstructured triangular/tetrahedral meshes remains challenging. The challenges mainly arise from the inability to ensure that mesh hierarchies are nested, which complicates parallelization efforts. This paper systematically investigates the non-nested unstructured case of parallel multilevel algorithms and develops a highly parallel non-nested multilevel smoothed Schwarz preconditioner. The proposed multilevel preconditioner incorporates two key techniques. The first is a new parallel coarsening algorithm that preserves the geometric features of the computational domain. The second is a corresponding parallel non-nested interpolation method designed for non-nested mesh hierarchies. This new preconditioner is applied to a broad range of linear parametric problems, benefiting from the reusability of the same coarse mesh hierarchy for problems with different parameters. Several numerical experiments validate the outstanding convergence and parallel efficiency of the proposed preconditioner, demonstrating effective scalability up to 1,000 processors. 
\end{abstract}

\begin{keyword}
non-nested unstructured mesh \sep multilevel Schwarz preconditioner \sep parallel scalability \sep complex geometry \sep linear parametric PDEs 
\MSC[2020] 65F08 \sep 65N55 \sep 65D05 
\end{keyword}

\end{frontmatter}

\section{Introduction}
The multilevel Schwarz methods are widely applied in large-scale scientific and engineering computing \citep{antonietti2014domain, brix2008multilevel, antonietti2007schwarz, liu2024one}, as they are  \textit{provably} scalable in the sense that the number of iterations does not grow much when the number of processor cores is increased. Notably, the condition number of the multilevel Schwarz preconditioned linear system is \textit{theoretically} bounded by $O(H/\delta)$ and $O(H/h)$ \citep{li2024preconditioned, smith1997domain, toselli2004domain}, where $H$, $h$ and $\delta$ represent the granularity of the coarse and fine meshes and the size of the overlap, respectively. To handle complex geometries, multilevel Schwarz methods are often used with unstructured meshes \citep{chan1999boundary, kong2020highly}. However, designing a multilevel Schwarz preconditioner on arbitrary unstructured triangular/tetrahedral meshes remains challenging. The challenges mainly arise from the inability to ensure that mesh hierarchies are nested, which complicates parallelization efforts \citep{antonietti2019v, kong2017scalable, kong2016highly}. 

To systematically investigate the parallelization of non-nested multilevel algorithms, a non-nested unstructured mesh perspective is introduced \citep{lange2016efficient, ibanez2016pumi}. In our discussion, the mesh and degrees of freedom (DoFs) are abstracted from the physical problem and encapsulated as reusable objects \citep{batty2010tetrahedral, alauzet20073d}. The non-nested mesh hierarchies need to be constructed based on the finest  unstructured mesh object. Then, non-nested mesh hierarchies and their associated DoFs are partitioned into subdomains and distributed across multiple processors for parallelization. The non-nested hierarchies typically lead to mismatched subdomain boundaries, which present major challenges in the design of parallel algorithms. Then, the abstraction of meshes and DoFs categorizes these challenges into two aspects. On the one hand, there is a need for robust and fast parallel coarsening algorithms to construct distributed coarse mesh hierarchies \citep{ollivier2003coarsening, de1999parallel, liu2017new}. On the other hand, mismatched subdomain boundaries necessitate the redesign of mesh-to-mesh interpolation for DoFs \citep{farrell2011conservative, farrell2009conservative, sampath2010parallel}.

There are several algorithms for constructing coarse mesh hierarchies from an existing fine unstructured mesh. Due to the significant computational cost, repeatedly invoking a mesh generator to create a multigrid hierarchy is generally impractical \citep{liu2017new}. The most popular approach is the topologically based coarsening paradigm, which includes Delaunay coarsening method \citep{guillard1993node}, edge contraction method \citep{ollivier2003coarsening, de1999parallel}, and small polyhedron reconnection (SPR) method \citep{liu2017new}. The Delaunay coarsening method ensures the quality of the coarse mesh but fails to preserve geometric features. In contrast, the edge contraction method maintains geometric features but struggles with narrow regions in a tetrahedral mesh. 
Compared to the edge contraction method, the SPR method enhances robustness in narrow regions but incurs significant computational overhead, thereby necessitating further exploration of parallel acceleration.
In practice, we seek a robust and fast parallel coarsening algorithm that preserves boundary geometry to reduce the overhead associated with special treatments on mismatched boundaries between fine and coarse meshes \citep{chan1999boundary}.
Then, we need to focus on interpolating both scalar and vector fields between different meshes within the computational domain in a fast and accurate way. 
Several interpolation methods for unstructured meshes are proposed and widely applied. 
One method involves interpolation via supermesh construction and projection \citep{farrell2011conservative, farrell2009conservative, di2021towards}, which effectively preserves conservation properties. Another method uses mesh-free approaches \citep{salvador2020intergrid, forti2015semi}, offering an operator with excellent locality. However, the current interpolation methods are not sufficiently explored in large-scale parallel computing scenarios.



In this paper, we propose a novel multilevel smoothed Schwarz preconditioner from a non-nested unstructured mesh perspective. The proposed preconditioner primarily incorporates two new techniques, each aimed at addressing challenges in parallel coarsening and parallel interpolation within non-nested scenarios. Leveraging the high parallelism and boundary-preserving characteristics of the consistent parallel advancing front technique (CPAFT) \citep{ma2024cpaft}, we design a new parallel geometry-preserving coarsening algorithm. The algorithm integrates the geometry-preserving characteristics of the edge contraction method with the robustness of the SPR method. With the proposed parallel coarsening algorithm, the non-nested coarse mesh hierarchies are constructed in parallel and stored on each processor. To address the mismatched subdomain boundaries, we introduce overlap into the interpolation algorithm and design a consistent and flexible interpolation operator based on the moving least squares (MLS) method \citep{hegen1996element, belytschko1994element, levin1998approximation, nogueira2010new}. This interpolation method mathematically provides a parallel interpolation framework for multiphysics problems on non-nested meshes, featuring excellent local dependency properties. Compared to the existing methods, the newly proposed interpolation method provides a flexible paradigm that circumvents the heavy overhead associated with intersection computations. By integrating the new parallel coarsening algorithm and the new parallel interpolation method into the V-cycle framework \citep{antonietti2019v, kong2017scalable}, we propose the novel non-nested multilevel preconditioner. The non-nested multilevel preconditioner is applied to a broad range of linear parametric partial differential equations (PDEs), which benefits from reusing the same coarse mesh hierarchies for problems with different parameters. Numerical experiments validate the outstanding convergence and parallel efficiency of the non-nested multilevel algorithm, demonstrating effective scalability up to 1,000 processors.

The rest of this paper is organized as follows. In Section~\ref{sec: Preliminaries}, the unstructured mesh perspective and the multilevel smoothed Schwarz framework are introduced. Section~\ref{sec: algorithm} presents the parallel geometry-preserving mesh coarsening algorithm, the multiphysics-oriented parallel non-nested interpolation methods, and the workflow of the proposed parallel non-nested multilevel smoothed Schwarz preconditioner. Results of a series of numerical experiments are presented in Section~\ref{sec: experiments}. Finally, conclusions are drawn in Section~\ref{sec: conclusions}. 

\section{Preliminaries}
\label{sec: Preliminaries}
\subsection{Linear parametric PDEs and discretization from an unstructured mesh perspective}
\label{subsec: discretization}
In this paper, we consider the second-order linear parametric PDEs in a complex computational domain $\Omega \subset \mathbb{R}^{d}\ (d=2,3)$ with boundary $\partial \Omega \subset \mathbb{R}^{d-1}$. The governing equations can be represented using linear operators $\bm \Phi$ and ${\bm \Phi}_{b}$ as follows, 
\begin{equation}
    \left \{
    \begin{aligned}
    {\bm \Phi}(\bm u, \mathcal{D} \bm u, \mathcal{D}^{2} \bm u, \bm x; \bm \mu) &= \bm 0,\ \bm x\ \texttt{in}\ \Omega, \\ 
    {\bm \Phi}_{b} (\bm u, \mathcal{D} \bm u, \bm x; \bm \mu) &= \bm 0,\  \bm x\ \texttt{on}\ \partial \Omega,
    \label{eq: linear-PDE}
    \end{aligned}
    \right.
\end{equation}
where $\bm u \in \mathbb{R}^{F}$ denotes a vector consisting of $F$ physical variables, and $\bm \mu$ represents the parameters of the problem.

We first assume $\bar{\Omega}_{h} = \bigcup_{k \in \mathcal{K}} \bar{k}$ represents a conforming triangulation/tetrahedralization of the computational domain $\Omega$, and $\mathcal{K}$ represents the set of elements. Let $\mathcal{N} = (z_{0},\dots,z_{d})\in \mathbb{Z}_{\geq 0}^{d+1}$ be the distribution of DoFs on each triangular/tetrahedral element, where $z_{0}, \dots, z_{d}$ denote the number of DoFs on geometric nodes associated with vertices, edges, faces, and volumes, respectively, as illustrated in Figure~\ref{multiphysics}. 
The set of elements $\mathcal{K}$ and distribution of DoFs $\mathcal{N}$ determines the discretization of physical problems.
In our subsequent discussion, we focus on the parameter-independent tuple $\mathcal{M} := (\mathcal{K},\ \mathcal{N})$, which serves as an unstructured mesh perspective in this work.
\begin{figure}[H]
    \centering
    \subfigure[$\mathcal{N}=(0,0,1)$]{
    \begin{minipage}[]{0.2\linewidth}
    \centering
    \begin{tikzpicture} [scale=1.0, every node/.style={scale=1}]
        \coordinate (A) at (0,0);
        \coordinate (B) at (2,0);
        \coordinate (C) at (1, 1.7320508075688772);
        \draw (A) -- (B) -- (C) -- cycle;

        \fill [black] (1, 0.5773502691896257) circle (2pt);
    \end{tikzpicture}
    \end{minipage}
    } \subfigure[$\mathcal{N}=(3, 3, 0, 0)$]{
    \begin{minipage}[]{0.2\linewidth}
    \centering
    \begin{tikzpicture} [scale=1.0, every node/.style={scale=1}]
        \coordinate (A) at (0,0);
        \coordinate (B) at (2,0);
        \coordinate (C) at (1, 1.7320508075688772);
        \coordinate (D) at (1, 0.5773502691896257);
        \draw (A) -- (B) -- (C) -- cycle;
        \draw[dashed] (A) -- (D); \draw[dashed] (B) -- (D); \draw[dashed] (C) -- (D);

        \filldraw [fill=gray] (A) circle (2pt); \filldraw [fill=gray] (B) circle (2pt);
        \filldraw [fill=gray] (C) circle (2pt); \filldraw [fill=gray] (D) circle (2pt);
        
        \filldraw [fill=gray] (1, 0) circle (2pt); \filldraw [fill=gray] (0.5, 0.8660254037844386) circle (2pt);
        \filldraw [fill=gray] (1.5, 0.8660254037844386) circle (2pt);
        \filldraw [fill=gray] (0.5, 0.28867513459481287) circle (2pt);
        \filldraw [fill=gray] (1.5, 0.28867513459481287) circle (2pt);
        \filldraw [fill=gray] (1, 1.1547005383792515) circle (2pt);
        
    \end{tikzpicture}
    \end{minipage}
    } \subfigure[$\mathcal{N}=(3,2,0)$]{
    \begin{minipage}[]{0.2\linewidth}
    \centering
    \begin{tikzpicture} [scale=1.0, every node/.style={scale=1}]
        \coordinate (A) at (0,0);
        \coordinate (B) at (2,0);
        \coordinate (C) at (1, 1.7320508075688772);
        \draw (A) -- (B) -- (C) -- cycle;

        \filldraw [fill=gray] (A) circle (2pt); 
        \filldraw [fill=gray] (B) circle (2pt);
        \filldraw [fill=gray] (C) circle (2pt);
        \filldraw [fill=white] (1, 0) circle (2pt);
        \filldraw [fill=white] (0.5, 0.8660254037844386) circle (2pt);
        \filldraw [fill=white] (1.5, 0.8660254037844386) circle (2pt);
    \end{tikzpicture}
    \end{minipage}
    } \subfigure[$\mathcal{N}=(4, 3, 0, 0)$]{
    \begin{minipage}[]{0.2\linewidth}
    \centering
    \begin{tikzpicture} [scale=1.0, every node/.style={scale=1}]
        \coordinate (A) at (0,0);
        \coordinate (B) at (2,0);
        \coordinate (C) at (1, 1.7320508075688772);
        \coordinate (D) at (1, 0.5773502691896257);
        \draw (A) -- (B) -- (C) -- cycle;
        \draw[dashed] (A) -- (D); \draw[dashed] (B) -- (D); \draw[dashed] (C) -- (D);

        \filldraw [fill=blue] (A) circle (2pt); \filldraw [fill=blue] (B) circle (2pt);
        \filldraw [fill=blue] (C) circle (2pt); \filldraw [fill=blue] (D) circle (2pt);

        \filldraw [fill=gray] (1, 0) circle (2pt); 
        \filldraw [fill=gray] (0.5, 0.8660254037844386) circle (2pt);
        \filldraw [fill=gray] (1.5, 0.8660254037844386) circle (2pt);
        \filldraw [fill=gray] (0.5, 0.28867513459481287) circle (2pt);
        \filldraw [fill=gray] (1.5, 0.28867513459481287) circle (2pt);
        \filldraw [fill=gray] (1, 1.1547005383792515) circle (2pt);
    \end{tikzpicture}
    \end{minipage}
    } \subfigure[legend]{
        \begin{minipage}[]{0.1\linewidth}
        \centering
        \begin{tikzpicture} [scale=1.0, every node/.style={scale=1}]
            \filldraw [fill = black] (0, 1.35) circle (2.0pt);
            \filldraw [fill = white] (0, 0.9) circle (2.0pt);
            \filldraw [fill = gray]  (0, 0.45) circle (2.0pt);
            \filldraw [fill = blue]  (0, 0.0) circle (2.0pt);

            \node at (0.15, 1.35) [right] {$1$};
            \node at (0.15, 0.9) [right] {$2$};
            \node at (0.15, 0.45) [right] {$3$};
            \node at (0.15, 0.0) [right] {$4$};
        \end{tikzpicture}
        \end{minipage}
    }
    \caption{4 examples of $\mathcal{N}$: (a) Finite volume method for 2D convection-diffusion equation; (b) Finite element method $[\mathcal{P}_2]^3$ for 3D linear elasticity equations; (c) Mixed finite element method $[\mathcal{P}_2]^2$-$\mathcal{P}_1$ for 2D Stokes flow; (d) Mixed finite element method
    $[\mathcal{P}_2]^3$-$\mathcal{P}_1$ for 3D Stokes flow;}
    \label{multiphysics}
\end{figure}
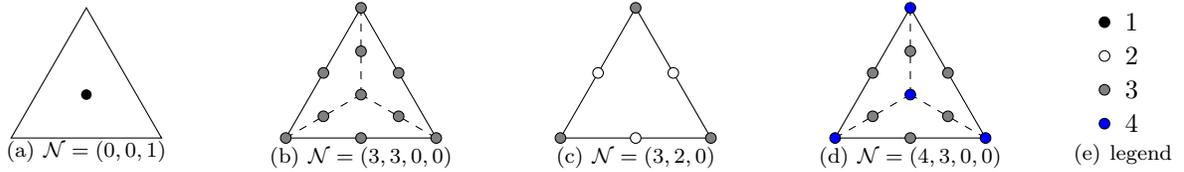

The unstructured mesh perspective $\mathcal{M}$ abstracts the mesh and DoFs from the parametric problem and encapsulates them into a reusable object, facilitating mesh-to-mesh operations. 
For calculation, let us establish an ordering for the geometric nodes in the unstructured mesh $\mathcal{M}$, where the dimension of the $n_{th}$ geometric node is $s(n)$, satisfying $0 \leq s(n) \leq d$. We then consider geometric nodes $X_{\mathcal{M}} := \{\bm x_{n}\in \mathcal{M}\ |\ z_{s(n)} \geq 1\}$ and corresponding unknowns $U_{\mathcal{M}} := \{ u\ \texttt{at}\ (\bm x_{n}, z)\ |\ \bm x_{n} \in X_{\mathcal{M}},\  z = 0, \dots, z_{s(n)}-1\}$. The unknown $u$ at $(\bm x_{n}, z)$ represents the $z_{th}$ DoF located at the geometric node $\bm x_{n}$.
Then, the PDE~\eqref{eq: linear-PDE} will be discretized on the mesh $\mathcal{M}$ using an $\mathcal{N}$-matched numerical scheme into $|U_{\mathcal{M}}|$ equations, described as ${\bm \Psi}_{\mathcal{M}} (\bm u_{\mathcal{M}}) = \bm 0$, in which ${\bm \Psi}_{\mathcal{M}}: \mathbb{R}^{|U_{\mathcal{M}}|} \to \mathbb{R}^{|U_{\mathcal{M}}|}$ and solution $\bm u_{\mathcal{M}} \in \mathbb{R}^{|U_{\mathcal{M}}|}$. Furthermore, the formulation as a system of linear algebraic equations is as follows:
\begin{equation}
    \mathcal{J}_{\mathcal{M}}\ {\bm u}_{\mathcal{M}} = {\bm b}_{\mathcal{M}},
    \label{equ: discretized}
\end{equation}
where right-hand side vector ${\bm b}_{\mathcal{M}}$ and Jacobian matrix $\mathcal{J}_{\mathcal{M}}$ of the discretization system are given by 
\begin{align}
    &\bm b_{\mathcal{M}}(i) = -{\bm \Psi}_{\mathcal{M}}(\bm 0; i),\ \ \ \ \ \ \ \ \ \ \ \ \ \ \ \ \  i = 0, \dots, |U_{\mathcal{M}}|-1, \\
    &\mathcal{J}_{\mathcal{M}}(i, j) = {\bm \Psi}_{\mathcal{M}}(\bm u_{\mathcal{M}}; i)/ \bm u_{\mathcal{M}}(j),\ \ \  j = 0, \dots, |U_{\mathcal{M}}|-1.
\end{align}
Given a preconditioner $B_{\mathcal{M}}^{-1}$, we then consider the right-preconditioned problem arsing from Equation~\eqref{equ: discretized}:
\begin{equation}
    \mathcal{J}_{\mathcal{M}} B_{\mathcal{M}}^{-1} \bm{z}_{\mathcal{M}} = \bm{b}_{\mathcal{M}},\quad \quad B_{\mathcal{M}} {\bm u}_{\mathcal{M}} = \bm{z}_{\mathcal{M}}.
    \label{equ: right-preconditioned}
\end{equation}


\subsection{Additive Schwarz preconditioner}
\label{subsec: RAS}
The set of elements $\mathcal{K}$ can be uniformly partitioned into $np$ subsets as $\mathcal{K} = \bigcup_{p = 1}^{np} \mathcal{K}_{p}$. The overlapping restricted additive Schwarz (RAS) method \citep{cai1999restricted} begins with a partition of the computational domain into $np$ nonoverlapping subdomains $\bar{\Omega}_{\mathcal{M}, p} = \bigcup_{k \in \mathcal{K}_{p}} \bar{k},\ p=1, \dots, np$, and then extend them into overlapping subdomains $\Omega_{\mathcal{M}, p}^{\delta},\ p=1, \dots, np, $ by including $\delta$ layers of elements from the neighboring subdomain. We then employ the restriction approach to extract $\mathcal{J}^{\delta}_{\mathcal{M}, p}$, for $p = 1, \dots, np$ on each subdomain from the global matrix $\mathcal{J}_{\mathcal{M}}$. The one-level RAS preconditioner on unstructured mesh $\mathcal{M}$ is defined as
\begin{align}
        &\mathcal{J}^{\delta}_{\mathcal{M}, p} = {\bm R}^{\delta}_{\mathcal{M}, p} \mathcal{J}_{\mathcal{M}} ({\bm R}^{\delta}_{\mathcal{M}, p})^{\top}, \\
        &B_{\mathcal{M}}^{-1}\  = \sum_{p=1}^{np} ({\bm R}^{0}_{\mathcal{M}, p})^{\top} {(\mathcal{\tilde{J}}^{\delta}_{\mathcal{M}, p})}^{-1} {\bm R}^{\delta}_{\mathcal{M}, p},
\end{align}
where ${(\mathcal{\tilde{J}}^{\delta}_{\mathcal{M}, p})}^{-1}$ is an (approximate) subdomain solver constructed from $\mathcal{J}^{\delta}_{\mathcal{M}, p}$. The operator ${\bm R}_{\mathcal{M}, p}^{\delta}$ denotes a restriction matrix which extracts a local overlapping vector ${\bm u}_{\mathcal{M}, p}^{\delta}$ from the global vector ${\bm u}_{\mathcal{M}}$, that is,
\begin{equation}
    {\bm u}^{\delta}_{\mathcal{M}, p} = {\bm R}^{\delta}_{\mathcal{M}, p} {\bm u}_{\mathcal{M}} = \begin{pmatrix} {\bm I} & {\bm O} \end{pmatrix} \begin{pmatrix} {\bm u}_{\mathcal{M}, p}^{\delta} \\ {\bm u}_{\mathcal{M}} / {\bm u}_{\mathcal{M}, p}^{\delta} \end{pmatrix}.
\end{equation}

\subsection{Multilevel smoothed Schwarz preconditioner}
\label{subsec: v-cycle}
The class of multilevel Schwarz algorithms is widely used for accelerating convergence and enhancing scalability in massively parallel scenarios. As shown in Figure~\ref{fig: v-cycle}, we consider the V-cycle multilevel smoothed Schwarz framework for solving the discretized system~\eqref{equ: discretized}, where the RAS preconditioner serves as the smoother on each level. Initially, a mesh hierarchy from fine to coarse $\mathcal{M}_{i},\ i = 0, \dots, L-1$ is constructed. Then, pre-smoothing, post-smoothing, and error correction are conducted on each level of the mesh hierarchy. Between different mesh levels, interpolation operators $\mathcal{I}_{\mathcal{M}_{i}}^{\mathcal{M}_{i+1}}$ and $\mathcal{I}_{\mathcal{M}_{i+1}}^{\mathcal{M}_{i}}$ are used as the fine-to-coarse transfer and coarse-to-fine transfer, respectively.

\begin{figure}[H]
    \centering
    \begin{tikzpicture}[scale=1.25, every node/.style={scale=1}]
    \draw[thick] (0.5,4) -- (0.5,3) -- (0.5,2) -- (0.5,1);
    \filldraw[fill=white] (0.5,4) circle (2pt);
    \filldraw[fill=white] (0.5,3) circle (2pt);
    \filldraw[fill=white] (0.5,2) circle (2pt);
    \filldraw[fill=white] (0.5,1) circle (2pt);

    \coordinate (M0_1) at (0.5,3.9);
    \coordinate (M1_0) at (0.5,3.0);
    \coordinate (M1_1) at (0.5,2.9);
    \coordinate (M2_0) at (0.5,2.0);
    \coordinate (M2_1) at (0.5,1.9);
    \coordinate (M3_0) at (0.5,1.0);
    \draw[->, >=latex, line width=1.0pt] (M0_1) -- (M1_0);
    \draw[->, >=latex, line width=1.0pt] (M1_1) -- (M2_0);
    \draw[->, >=latex, line width=1.0pt] (M2_1) -- (M3_0);

    \node at (0.8, 4.25) {$\mathcal{M}_{0}$};
    \node at (0.8, 3.25) {$\mathcal{M}_{1}$};
    \node at (0.8, 2.25) {$\mathcal{M}_{2}$};
    \node at (0.8, 1.25) {$\mathcal{M}_{3}$};
    
    \foreach \y [count=\n] in {3, 2, 1, 0} {
        \draw[dashed] (0, \n) -- (12, \n);
        \node[right] at (12.1, \n) {Level \y};
    }

    \draw[thick] (1.5,4)--(2.5,4)--(3.0,3)--(4.0,3)--(4.5,2)--(5.5,2)--(6.0,1)--(7.0,1)--(7.5,2)--(8.5,2)--(9.0,3)--(10.0,3)--(10.5,4)--(11.5,4);
    \filldraw[fill=black] (1.5,4) circle (2pt);
    \filldraw[fill=black] (2.5,4) circle (2pt);
    \filldraw[fill=black] (3.0,3) circle (2pt);
    \filldraw[fill=black] (4.0,3) circle (2pt);
    \filldraw[fill=black] (4.5,2) circle (2pt);
    \filldraw[fill=black] (5.5,2) circle (2pt);
    \filldraw[fill=black] (6.0,1) circle (2pt);
    \filldraw[fill=black] (7.0,1) circle (2pt);
    \filldraw[fill=black] (7.5,2) circle (2pt);
    \filldraw[fill=black] (8.5,2) circle (2pt);
    \filldraw[fill=black] (9.0,3) circle (2pt);
    \filldraw[fill=black] (10.0,3) circle (2pt);
    \filldraw[fill=black] (10.5,4) circle (2pt);
    \filldraw[fill=black] (11.5,4) circle (2pt);

    \node at (2.0, 4.25) {$\mathcal{S}(B^{-1}_{\mathcal{M}_{0}})$};
    \node at (3.5, 3.25) {$\mathcal{S}(B^{-1}_{\mathcal{M}_{1}})$};
    \node at (5.0, 2.25) {$\mathcal{S}(B^{-1}_{\mathcal{M}_{2}})$};
    \node at (6.5, 1.25) {$\mathcal{S}(B^{-1}_{\mathcal{M}_{3}})$};
    \node at (8.0, 2.25) {$\mathcal{S}(B^{-1}_{\mathcal{M}_{2}})$};
    \node at (9.5, 3.25) {$\mathcal{S}(B^{-1}_{\mathcal{M}_{1}})$};
    \node at (11.0, 4.25) {$\mathcal{S}(B^{-1}_{\mathcal{M}_{0}})$};

    \node at (2.4, 3.5) {$\mathcal{I}_{\mathcal{M}_{0}}^{\mathcal{M}_{1}}$}; \node at (10.6, 3.5) {$\mathcal{I}_{\mathcal{M}_{1}}^{\mathcal{M}_{0}}$};
    \node at (3.9, 2.5) {$\mathcal{I}_{\mathcal{M}_{1}}^{\mathcal{M}_{2}}$}; \node at (9.1, 2.5) {$\mathcal{I}_{\mathcal{M}_{2}}^{\mathcal{M}_{1}}$};
    \node at (5.4, 1.5) {$\mathcal{I}_{\mathcal{M}_{2}}^{\mathcal{M}_{3}}$}; \node at (7.6, 1.5) {$\mathcal{I}_{\mathcal{M}_{3}}^{\mathcal{M}_{2}}$};

    \coordinate (A_0) at (3.0,4.0); \coordinate (B_0) at (10.0,4.0);
    \coordinate (A_1) at (4.5,3.0); \coordinate (B_1) at (8.5,3.0);
    \coordinate (A_2) at (6.0,2.0); \coordinate (B_2) at (7.0,2.0);
    \draw[->, >=latex, line width=1.0pt] (A_0) -- (B_0);
    \draw[->, >=latex, line width=1.0pt] (A_1) -- (B_1);
    \draw[->, >=latex, line width=1.0pt] (A_2) -- (B_2);

    \node at (6.5, 4.25) {$\mathcal{C}(\mathcal{M}_{0})$}; 
    \node at (6.5, 3.25) {$\mathcal{C}(\mathcal{M}_{1})$};
    \node at (6.5, 2.25) {$\mathcal{C}(\mathcal{M}_{2})$};
    
    \end{tikzpicture}
    \caption{Schematic description of V-cycle multilevel smoothed Schwarz framework. Here, $\mathcal{S}(B^{-1}_{\mathcal{M}_{i}})$ represents the smoother (both pre-smoothing and post-smoothing) based on RAS preconditioner $B^{-1}_{\mathcal{M}_{i}}$, and $\mathcal{C}(\mathcal{M}_{i})$ represents the error correction, for $i = 0, \dots, L-1$. }
    \label{fig: v-cycle}
\end{figure}
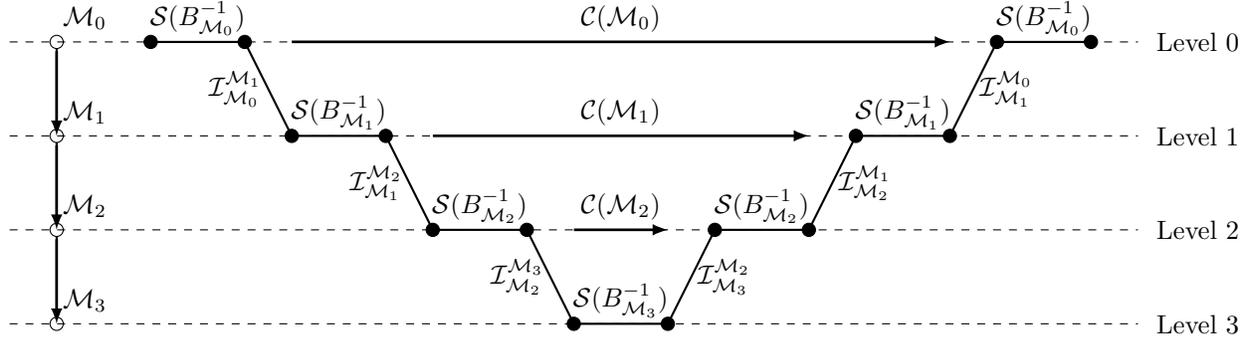

In practical applications, we typically only have access to the finest mesh $\mathcal{M}_0$. Therefore, the first step of the multilevel algorithm always involves constructing the coarse mesh hierarchy $\mathcal{M}_{i}$ for $i = 1, \dots, L-1$ from the finest mesh $\mathcal{M}_0$. To ensure that $\mathcal{M}_{i}$ inherits the boundary discretization from $\mathcal{M}_0$, it is essential to preserve the geometric boundaries during the mesh coarsening process. Since nested mesh hierarchies are less flexible for accommodating complex geometric boundaries,  considering a non-nested coarse hierarchy within the same computational domain $\bar{\Omega}_{h}$ can be beneficial. In parallel scenarios, as $\mathcal{M}_0$ is partitioned into $np$ subdomains, the construction of a non-nested mesh hierarchy presents significant challenges, which is the first major difficulty encountered. Additionally, the non-nested hierarchy typically leads to mismatched subdomain boundaries, complicating the design of mesh-to-mesh interpolation methods. These two main difficulties are summarized in Remark~\ref{challenges}.



\ 

\begin{remark}[\textbf{Main difficulties}]
In non-nested scenarios, the large-scale parallel implementation of the multilevel smoothed Schwarz preconditioner encounters two main difficulties:
\begin{itemize}
\item How can one rapidly construct a sequence of geometry-preserving non-nested coarse meshes in parallel based on the finest mesh? 
\item How can one design a flexible interpolation operator on domain-decomposed non-nested meshes when the subdomain boundaries of different levels are mismatched?
\end{itemize}
\label{challenges}
\end{remark}

\section{The parallel non-nested multilevel smoothed Schwarz preconditioner}
\label{sec: algorithm}
In this section, to address the challenges arising from the non-nested hierarchy, we introduce a new parallel geometry-preserving coarsening algorithm and a parallel interpolation method suitable for multiphysics problems on non-nested meshes. By combining these two techniques, a novel parallel non-nested multilevel smoothed Schwarz preconditioner is proposed.

\subsection{A parallel geometry-preserving mesh coarsening algorithm}
Coarsening from the finest mesh to generate a sequence of coarse mesh $\mathcal{M}_{i},\ i = 1, \dots, L-1$ always serves as the initial operation in multilevel algorithms. Without loss of generality, this subsection focuses on the process of coarsening from $\mathcal{M}_{i}$ to $\mathcal{M}_{i+1}$. To begin, we construct a graph $\hat{\mathcal{M}}_i:=(\mathcal{V}_{i}, \mathcal{E}_{i})$ based on the element set $\mathcal{K}_i$ and the interior vertex set $\mathcal{V}_i$ in mesh $\mathcal{M}_i$. Two vertices $v_1, v_2 \in \mathcal{V}_{i}$ are connected by an edge $e \in \mathcal{E}_i$ if and only if vertices $v_1, v_2$ both belong to the same element $k \in \mathcal{K}_i$. In Figure~\ref{fig: coarsening schematic}(a), as the mesh $\mathcal{M}_i$ is partitioned into $np$ subdomains, the graph $\hat{\mathcal{M}}_i$ is also distributed and stored across $np$ processors. Subsequently, we design an iterative algorithm based on $\hat{\mathcal{M}}_i$ to successively remove the interior vertices in parallel from the set $\mathcal{V}_i$ while preserving those located on the boundaries.

Above all, we choose the maximal independent set (MIS) \citep{luby1985simple} of $\hat{\mathcal{M}}_i$ as the interior vertices to be removed. That is, find a subset $\mathcal{S} \subset \mathcal{V}_{i}$ such that: 
\begin{itemize}
    \item (1) For every pair of vertices $v_1, v_2 \in \mathcal{S}$, there is no edge in $\mathcal{E}_i$ connecting them;
    \item (2) Every vertex $v_1 \in \mathcal{V}_{i} \setminus \mathcal{S}$ is connected to at least one vertex $v_2 \in \mathcal{S}$.
\end{itemize}
In ref. \citep{ma2024cpaft}, a highly parallel MIS algorithm is applicable for solving this problem. Figure~\ref{fig: coarsening schematic}(a) provides a schematic illustration of the selection of $\mathcal{S}$. If $\mathcal{S}$ is empty, it indicates that the interior vertex set $\mathcal{V}_{i}$ is empty, thereby terminating the algorithm. Otherwise, for every vertex $v \in \mathcal{S}$, select the elements in $\mathcal{K}_i$ containing $v$, forming $|\mathcal{S}|$ disjoint subsets of elements $\mathcal D_{k} \subset \mathcal{K}_i,\ k = 0, \dots, |\mathcal{S}|-1$, which represent the elements that need to be deleted. As shown in Figure~\ref{fig: coarsening schematic}(b), for $k = 0, \dots, |\mathcal{S}|-1$, every $\mathcal{D}_{k}$ forms a small polyhedron as ref. \citep{liu2017new}, with its surface denoted as $\mathcal{F}_{k}$. Since the mesh $\mathcal{M}_i$ has been decomposed into $np$ subdomains, the sets of surfaces $\mathcal{F}_{k},\ k = 0, \dots, |\mathcal{S}|-1$ are naturally distributed evenly across $np$ processors. For the surface $\mathcal{F} := \bigcup_{k = 0}^{|\mathcal{S}|-1} \mathcal{F}_{k}$, we introduce the consistent parallel advancing front technique (CPAFT) algorithm \citep{ma2024cpaft} with a coarse scale for generating new coarse elements to fill the small polyhedra formed by $\mathcal{D}_{k}$ as shown in Figure~\ref{fig: coarsening schematic}(c), benefiting from its boundary-preserving properties and excellent parallel scalability. Finally, each processor  independently merges the newly generated elements with the remaining elements to construct the coarse mesh $\mathcal{M}_{i+1}$. These designs guarantee that each step of the algorithm is highly parallel.

\begin{figure}[H]
    \centering
    \subfigure[\textbf{Original mesh} $NE=14, NV=12$]{
    \begin{minipage}[]{0.31\linewidth}
    \centering
    \begin{tikzpicture} [scale=1.6, every node/.style={scale=1}]
        \coordinate (A1) at (2, 1);
        \coordinate (A2) at (1, 2);
        \coordinate (A3) at (0, 1);
        \coordinate (A4) at (1, 0);
        \coordinate (A5) at (1.70711,1.70711);
        \coordinate (A6) at (1.70711,0.292893);
        \coordinate (A7) at (0.292893,0.292893);
        \coordinate (A8) at (0.292893,1.70711);
        \coordinate (A9) at (1.11133,0.731234);
        \coordinate (A10) at (0.849525,1.36328);
        \coordinate (A11) at (1.44768,1.19024);
        \coordinate (A12) at (0.54892,0.81796);

        \foreach \point in {A1, A2, A3, A4, A5, A6, A7, A8, A9, A10, A11, A12} {
            \filldraw[fill=black] (\point) circle (0.5pt);
        }

        \draw[thick] (A1)--(A5); \draw[thick] (A1)--(A6); \draw[thick] (A1)--(A11);
        \draw[thick] (A5)--(A11); \draw[thick] (A6)--(A11); \draw[thick] (A2)--(A5);
        \draw[thick] (A2)--(A11); \draw[thick] (A2)--(A10); \draw[thick] (A2)--(A8);
        \draw[thick] (A8)--(A10); \draw[thick] (A3)--(A8); \draw[thick] (A3)--(A12);
        \draw[thick] (A8)--(A12); \draw[thick] (A10)--(A12); \draw[thick] (A9)--(A10);
        \draw[thick] (A10)--(A11); \draw[thick] (A9)--(A11); \draw[thick] (A9)--(A12);
        \draw[thick] (A3)--(A7); \draw[thick] (A7)--(A12); \draw[thick] (A4)--(A12);
        \draw[thick] (A4)--(A7); \draw[thick] (A4)--(A9); \draw[thick] (A4)--(A6);
        \draw[thick] (A6)--(A9);

        \filldraw[fill=red] (A11) circle (1.5pt);
        \filldraw[fill=blue] (A12) circle (1.5pt);

        \draw[dashed, yellow, line width=1.5pt] (A2)--(A10)--(A9)--(A4);
    \end{tikzpicture}
    \end{minipage}
    }
    \subfigure[Delete elements]{
    \begin{minipage}[]{0.31\linewidth}
    \centering
    \begin{tikzpicture} [scale=1.6, every node/.style={scale=1}]
        \coordinate (A1) at (2, 1);
        \coordinate (A2) at (1, 2);
        \coordinate (A3) at (0, 1);
        \coordinate (A4) at (1, 0);
        \coordinate (A5) at (1.70711,1.70711);
        \coordinate (A6) at (1.70711,0.292893);
        \coordinate (A7) at (0.292893,0.292893);
        \coordinate (A8) at (0.292893,1.70711);
        \coordinate (A9) at (1.11133,0.731234);
        \coordinate (A10) at (0.849525,1.36328);
        \coordinate (A11) at (1.44768,1.19024);
        \coordinate (A12) at (0.54892,0.81796);

        \foreach \point in {A1, A2, A3, A4, A5, A6, A7, A8, A9, A10, A11, A12} {
            \filldraw[fill=black] (\point) circle (0.5pt);
        }

        \draw[thick] (A4)--(A6); \draw[thick] (A8)--(A2);

        \fill[red!50] (A5) -- (A1) -- (A6) -- (A9) -- (A10) -- (A2) -- cycle;
        \draw[thick, red, line width = 1.5pt] (A5) -- (A1) -- (A6) -- (A9) -- (A10) -- (A2) -- (A5);

        \fill[blue!50] (A10) -- (A9) -- (A4) -- (A7) -- (A3) -- (A8) -- cycle;
        \draw[thick, blue, line width = 1.5pt] (A10) -- (A9) -- (A4) -- (A7) -- (A3) -- (A8) -- (A10);

        
        \draw[dashed, yellow, line width=1.5pt] (A2)--(A10)--(A9)--(A4);
    \end{tikzpicture}
    \end{minipage}
    }
    \subfigure[$1_{st}$ Coarse mesh $NE=10, NV=10$]{
    \begin{minipage}[]{0.31\linewidth}
    \centering
    \begin{tikzpicture} [scale=1.6, every node/.style={scale=1}]
        \coordinate (A1) at (1, 0);
        \coordinate (A2) at (0.292893, 0.292893);
        \coordinate (A3) at (1.07457, 0.819961);
        \coordinate (A4) at (0.814915, 1.44683);
        \coordinate (A5) at (1, 2);
        \coordinate (A6) at (1.70711, 1.70711);
        \coordinate (A7) at (0, 1);
        \coordinate (A8) at (2, 1);
        \coordinate (A9) at (0.292893, 1.70711);
        \coordinate (A10) at (1.70711, 0.292893);

        \foreach \point in {A1, A2, A3, A4, A5, A6, A7, A8, A9, A10} {
            \filldraw[fill=black] (\point) circle (0.5pt);
        }

        \fill[blue!50] (A4) -- (A7) -- (A9) -- cycle;
        \fill[blue!50] (A4) -- (A3) -- (A7) -- cycle;
        \fill[blue!50] (A3) -- (A2) -- (A7) -- cycle;
        \fill[blue!50] (A3) -- (A1) -- (A2) -- cycle;

        \fill[red!50] (A5) -- (A4) -- (A6) -- cycle;
        \fill[red!50] (A4) -- (A3) -- (A6) -- cycle;
        \fill[red!50] (A6) -- (A3) -- (A8) -- cycle;
        \fill[red!50] (A8) -- (A3) -- (A10) -- cycle;

        \draw[thick] (A1)--(A2); \draw[thick] (A2)--(A3); \draw[thick] (A3)--(A1);
        \draw[thick] (A2)--(A7); \draw[thick] (A7)--(A3); \draw[thick] (A7)--(A4);
        \draw[thick] (A4)--(A3); \draw[thick] (A7)--(A9); \draw[thick] (A9)--(A4);
        \draw[thick] (A9)--(A5); \draw[thick] (A5)--(A4); \draw[thick] (A5)--(A6);
        \draw[thick] (A6)--(A4); \draw[thick] (A6)--(A3); \draw[thick] (A6)--(A8);
        \draw[thick] (A8)--(A3); \draw[thick] (A8)--(A10); \draw[thick] (A10)--(A3);
        \draw[thick] (A10)--(A1); \draw[thick] (A1)--(A3);

        \draw[dashed, yellow, line width=1.5pt] (A5)--(A4)--(A3)--(A1);
    \end{tikzpicture}
    \end{minipage}
    }
    \caption{Schematic description of the geometry-preserving mesh coarsening using $np = 2$ processors in one iteration. Yellow dashed lines represent the boundaries between two subdomains. Red and blue parts  respectively represent the domains processed by the two processors with CPAFT algorithm. The white parts represent the elements that are not processed in this iteration. }
    \label{fig: coarsening schematic}
\end{figure}
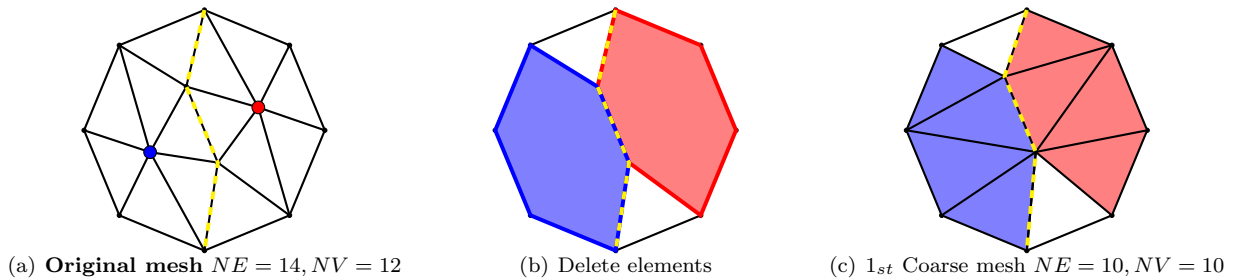

By repeating the above process $M$ times, we propose a parallel geometry-preserving coarsening algorithm in Algorithm~\ref{alg: coarsening}. Here, the maximal number of iterations $M$ is a user-adjustable parameter which can be used to control the coarsening ratio. Typically, we set $M=4$ for 2D problems and $M=8$ for 3D problems.


\begin{algorithm} [H]
\caption{The parallel geometry-preserving coarsening algorithm}
\label{alg: coarsening}
\begin{algorithmic}[1]
\State \textbf{Input:} The fine mesh $\mathcal{M}_{i}$.
\State Partition $\mathcal{M}_{i}$ into $np$ subdomains. Then, construct graph $\hat{\mathcal{M}}_{i}^{(1)}$ from $\mathcal{M}_{i}$ in parallel.
\For{\(its = 1 \) \textbf{to} \( M \)}
\State Find the MIS $\mathcal{S}$ of graph $\hat{\mathcal{M}}_{i}^{(its)}$ using the parallel MIS algorithm. If $|\mathcal{S}|=0$, break.
\State Delete all elements connected with $\mathcal{S}$ to obtain remaining elements $\mathcal{R}$ and internal surface $\mathcal{F}$.
\State Conduct CPAFT iterations using $\mathcal{F}$. The  generated elements are denoted as $\mathcal{G}$.
\State Construct $\hat{\mathcal{M}}_{i}^{(its+1)}$ based on elements $\mathcal{K}_{i}^{(its+1)}=\mathcal{R} \bigcup \mathcal{G}$ in parallel.
\EndFor
\State Construct $\mathcal{M}_{i+1}$ from graph $\hat{\mathcal{M}}_{i}^{(M)}$ in parallel.
\State \textbf{Output:} A coarse mesh $\mathcal{M}_{i+1}$.
\end{algorithmic}
\end{algorithm}

\begin{figure}[H]
    \subfigure[$its = 1: NE=10, NV = 10$]{
    \begin{minipage}[]{0.31\linewidth}
    \centering
    \begin{tikzpicture} [scale=1.6, every node/.style={scale=1}]
        \coordinate (A1) at (1, 0);
        \coordinate (A2) at (0.292893, 0.292893);
        \coordinate (A3) at (1.07457, 0.819961);
        \coordinate (A4) at (0.814915, 1.44683);
        \coordinate (A5) at (1, 2);
        \coordinate (A6) at (1.70711, 1.70711);
        \coordinate (A7) at (0, 1);
        \coordinate (A8) at (2, 1);
        \coordinate (A9) at (0.292893, 1.70711);
        \coordinate (A10) at (1.70711, 0.292893);

        \foreach \point in {A1, A2, A3, A4, A5, A6, A7, A8, A9, A10} {
            \filldraw[fill=black] (\point) circle (0.5pt);
        }

        \draw[thick] (A1)--(A2); \draw[thick] (A2)--(A3); \draw[thick] (A3)--(A1);
        \draw[thick] (A2)--(A7); \draw[thick] (A7)--(A3); \draw[thick] (A7)--(A4);
        \draw[thick] (A4)--(A3); \draw[thick] (A7)--(A9); \draw[thick] (A9)--(A4);
        \draw[thick] (A9)--(A5); \draw[thick] (A5)--(A4); \draw[thick] (A5)--(A6);
        \draw[thick] (A6)--(A4); \draw[thick] (A6)--(A3); \draw[thick] (A6)--(A8);
        \draw[thick] (A8)--(A3); \draw[thick] (A8)--(A10); \draw[thick] (A10)--(A3);
        \draw[thick] (A10)--(A1); \draw[thick] (A1)--(A3);

        \draw[dashed, yellow, line width=1.5pt] (A5)--(A4)--(A3)--(A1);
    \end{tikzpicture}
    \end{minipage}
    }\subfigure[$its = 2: NE=8, NV = 9$]{
    \begin{minipage}[]{0.31\linewidth}
    \centering
    \begin{tikzpicture} [scale=1.6, every node/.style={scale=1}]
        \coordinate (A1) at (1.10102, 0.898985);
        \coordinate (A2) at (0, 1);
        \coordinate (A3) at (1, 2);
        \coordinate (A4) at (0.292893, 1.70711);
        \coordinate (A5) at (1.70711, 1.70711);
        \coordinate (A6) at (2, 1);
        \coordinate (A7) at (1.70711, 0.292893);
        \coordinate (A8) at (1, 0);
        \coordinate (A9) at (0.292893, 0.292893);

        \foreach \point in {A1, A2, A3, A4, A5, A6, A7, A8, A9} {
            \filldraw[fill=black] (\point) circle (0.5pt);
        }

        \draw[thick] (A2)--(A4); \draw[thick] (A4)--(A3); \draw[thick] (A3)--(A2);
        \draw[thick] (A3)--(A1); \draw[thick] (A1)--(A2); \draw[thick] (A3)--(A5);
        \draw[thick] (A5)--(A1); \draw[thick] (A1)--(A9); \draw[thick] (A9)--(A2);
        \draw[thick] (A5)--(A6); \draw[thick] (A6)--(A1); \draw[thick] (A6)--(A7);
        \draw[thick] (A7)--(A1); \draw[thick] (A7)--(A8); \draw[thick] (A8)--(A1);
        \draw[thick] (A8)--(A9); \draw[thick] (A9)--(A1); \draw[thick] (A9)--(A2);

        \draw[dashed, yellow, line width=1.5pt] (A3)--(A1)--(A8);
    \end{tikzpicture}
    \end{minipage}
    } \subfigure[$its = 3: NE=6, NV = 8$ (\textbf{Output})]{
    \begin{minipage}[]{0.31\linewidth}
    \centering
    \begin{tikzpicture} [scale=1.6, every node/.style={scale=1}]
        \coordinate (A1) at (0, 1);
        \coordinate (A2) at (1, 2);
        \coordinate (A3) at (0.292893, 1.70711);
        \coordinate (A4) at (1.70711, 1.70711);
        \coordinate (A5) at (2, 1);
        \coordinate (A6) at (1.70711, 0.292893);
        \coordinate (A7) at (1, 0);
        \coordinate (A8) at (0.292893, 0.292893);

        \foreach \point in {A1, A2, A3, A4, A5, A6, A7, A8} {
            \filldraw[fill=black] (\point) circle (0.5pt);
        }

        \draw[thick] (A1)--(A3)--(A2)--(A4)--(A5)--(A6)--(A7)--(A8)--(A1);
        \draw[thick] (A7)--(A5); \draw[thick] (A8)--(A5); \draw[thick] (A2)--(A5); \draw[thick] (A8)--(A2); \draw[thick] (A1)--(A2);

        \draw[dashed, yellow, line width=1.5pt] (A2)--(A8);
    \end{tikzpicture}
    \end{minipage}
    }
    \caption{The 3 iterations of Algorithm~\ref{alg: coarsening} using $np = 2$ processors. Here, yellow dashed lines represent the boundaries between two subdomains. The algorithm terminates after three iterations because all interior vertices are removed. }
    \label{fig: coarsening iter}
\end{figure}
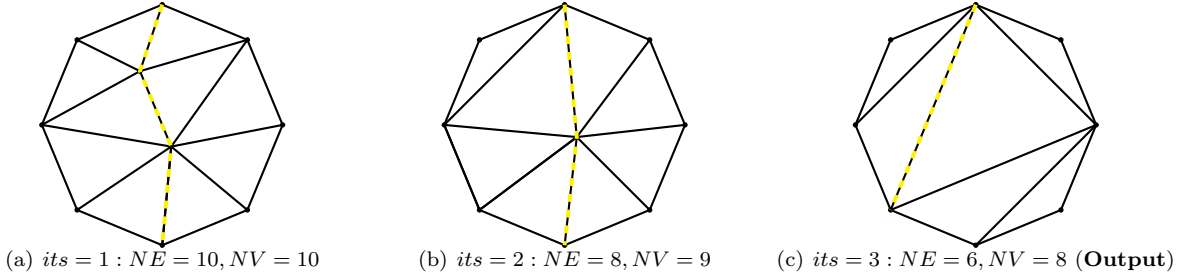

Figure~\ref{fig: coarsening iter} displays the results of three iterations applied to the original mesh $\mathcal{M}_{i}$ shown in Figure~\ref{fig: coarsening schematic}(a). As all interior vertices are removed, Algorithm~\ref{alg: coarsening} terminates after three iterations, with the final output $\mathcal{M}_{i+1}$ presented in Figure~\ref{fig: coarsening iter}(c). As the iteration progresses, interior vertices are progressively removed and the elements are continuously modified, resulting in gradual adjustments to the boundaries of the subdomains. Adjustments to the subdomain boundaries are implemented to promote a roughly balanced load of tasks across each subdomain, thereby facilitating satisfactory parallel efficiency. Finally, all coarse meshes generated by the coarsening algorithm are distributed and stored across $np$ processors, with each mesh level representing a domain decomposition of the computational domain $\Omega_h$,
\begin{equation}
    \label{eq: domain decomposition}
    \bar{\Omega}_h = \bigcup_{p = 1}^{np} \bar{\Omega}_{\mathcal{M}_{i}, p},\ \ \ i = 0, \dots, L-1.
\end{equation}


\subsection{A multiphysics-oriented parallel non-nested interpolation method}
Using the parallel geometry-preserving mesh coarsening algorithm, we obtain a non-nested mesh hierarchy $\mathcal{M}_{i},\ i = 0, \dots, L-1$. However, it can be observed from Figure~\ref{fig: coarsening iter} that the subdomain boundary in the output mesh $\mathcal{M}_{i+1}$ typically does not match that of the input mesh $\mathcal{M}_{i}$. For meshes at different levels, the mismatch of subdomain boundaries necessitates cross-processor processing to obtain nearby intersecting elements from other mesh levels, which presents a major challenge in designing the interpolation operator. Additionally, in multiphysics scenarios, the distribution of DoFs becomes increasingly intricate. In these cases, vertices, edges, and faces on the subdomain boundaries necessitate specialized handling, which further exacerbates the challenge associated with cross-processor processing. As far as we know, existing methods have not systematically addressed this challenge. In this work, we employ overlapping domain decomposition to address the challenge arising from mismatched subdomain boundaries. Subsequently, the field-splitting strategy \citep{calandrini2020field} and the MLS method \citep{levin1998approximation} are introduced for interpolation under varying distributions of DoFs. 


Without loss of generality, we consider interpolation from a mesh level $\tilde{\mathcal{M}}$ to another mesh level $\mathcal{M}$. Subsection~\ref{subsec: discretization} defines the set of unknowns $U_{\mathcal{M}}$ on a mesh level $\mathcal{M}$, implying that a node ${\bm x}_{*}$ may correspond to multiple unknowns. To begin, we decompose the DoFs at each node ${\bm x}_{*}$ using a splitting strategy based on physical variables. This implies that the distribution of DoFs, denoted as $\mathcal{N}$, is split as
\begin{equation}
    \left\{
    \begin{aligned}
        &\mathcal{N}=\sum_{f=1}^{F}\mathcal{N}^{f}, \ \mathcal{N}^{f} = (z^{f}_{0},\dots,z^{f}_{d})\in \mathbb{Z}_{\geq 0}^{d+1}, \\
        &z_{s}^{f}\leq 1,\ \ \ \ \ \ \ \ \ s = 0, \dots, d;\ \ f = 1, \dots, F.
    \end{aligned}
    \right.
\end{equation}
Then, with the splitting of $\mathcal{N}$, the unstructured mesh $\mathcal{M}$ can be treated in the splitting format shown in Figure~\ref{fig: field decomposition},
\begin{equation}
    \mathcal{M} = \sum_{f=1}^{F}\mathcal{M}^{f},\ \ \ \mathcal{M}^{f} = (\mathcal{K},\ \mathcal{N}^{f}).
    \label{equ: field-split}
\end{equation}
The goal of this splitting strategy is to ensure that there is at most one unknown per geometric node on the mesh. For each $f$, we consider geometric nodes $X_{\mathcal{M}^{f}} := \{{\bm x}_{n} \in X_{\mathcal{M}}\ |\ z_{s(n)}^{f} = 1\} \subset X_{\mathcal{M}}$ and corresponding unknowns $U_{\mathcal{M}^{f}} := \{u\ \texttt{at}\ (\bm x_n, 0)\ |\ {\bm x}_n \in X_{\mathcal{M}^{f}}\} \subset U_{\mathcal{M}}$. 
As a consequence, the complexities arising from the intricate distribution of DoFs are simplified to the handling of nodes in $X_{\mathcal{M}^{f}}, f = 1, \dots, F$, which facilitates easier parallel processing. Let us define an operator ${\bm R}_{f}$ that extracts vector $\bm u_{\mathcal{M}^{f}}$ associated with the $f_{th}$ physical variable from $\bm u_{\mathcal{M}}$. The treatment of mesh $\tilde{\mathcal{M}}$ is entirely analogous. Then, the unknowns on $\mathcal{M}$ can be interpolated in a splitting format based on $F$ physical variables,
\begin{equation}
    \label{equ: interpolation split}
    \mathcal{I}_{\tilde{\mathcal{M}}}^{\mathcal{M}} = \sum_{f=1}^{F} {\bm R}_{f}^{\top} \mathcal{I}_{\tilde{\mathcal{M}}^{f}}^{\mathcal{M}^{f}} \tilde{\bm R}_{f}.
\end{equation}

\begin{figure}[H]
    \centering
    \begin{tikzpicture} [xscale=2.4, yscale = 1.8, every node/.style={scale=1}]
        \coordinate (AA1) at (0.4854+0.15, 1.4158+0.75);
        \coordinate (AA2) at (0.3762+0.15, 1.1632+0.75);
        \coordinate (AA3) at (0.0905+0.15, 0.9947+0.75);
        \coordinate (AA4) at (0.9476+0.15, 1.3316+0.75);
        \coordinate (AA5) at (0.7571+0.15, 1.0789+0.75);
        \coordinate (AA6) at (0.4714+0.15, 0.8263+0.75);
        \coordinate (AA7) at (0.0905+0.15, 0.6579+0.75);
        \coordinate (AA8) at (1.3286+0.15, 1.3316+0.75);
        \coordinate (AA9) at (1.1381+0.15, 0.9947+0.75);
        \coordinate (AA10) at (0.8724+0.15, 0.7421+0.75);
        \coordinate (AA11) at (0.5467+0.15, 0.5037+0.75);
        \coordinate (AA12) at (0.1857+0.15, 0.3053+0.75);

        \draw[thick] (AA1) -- (AA2); \coordinate (Mid_1) at ($(AA1)!0.5!(AA2)$); \filldraw[fill=white] (Mid_1) circle (0.8pt);
        \draw[thick] (AA1) -- (AA4); \coordinate (Mid_2) at ($(AA1)!0.5!(AA4)$); \filldraw[fill=white] (Mid_2) circle (0.8pt);
        \draw[thick] (AA1) -- (AA5); \coordinate (Mid_3) at ($(AA1)!0.5!(AA5)$); \filldraw[fill=white] (Mid_3) circle (0.8pt);
        \draw[thick] (AA2) -- (AA3); \coordinate (Mid_4) at ($(AA2)!0.5!(AA3)$); \filldraw[fill=white] (Mid_4) circle (0.8pt);
        \draw[thick] (AA2) -- (AA5); \coordinate (Mid_5) at ($(AA2)!0.5!(AA5)$); \filldraw[fill=white] (Mid_5) circle (0.8pt);
        \draw[thick] (AA2) -- (AA6); \coordinate (Mid_6) at ($(AA2)!0.5!(AA6)$); \filldraw[fill=white] (Mid_6) circle (0.8pt);
        \draw[thick] (AA3) -- (AA6); \coordinate (Mid_7) at ($(AA3)!0.5!(AA6)$); \filldraw[fill=white] (Mid_7) circle (0.8pt);
        \draw[thick] (AA3) -- (AA7); \coordinate (Mid_8) at ($(AA3)!0.5!(AA7)$); \filldraw[fill=white] (Mid_8) circle (0.8pt);
        \draw[thick] (AA4) -- (AA5); \coordinate (Mid_9) at ($(AA4)!0.5!(AA5)$); \filldraw[fill=white] (Mid_9) circle (0.8pt);
        \draw[thick] (AA4) -- (AA8); \coordinate (Mid_10) at ($(AA4)!0.5!(AA8)$); \filldraw[fill=white] (Mid_10) circle (0.8pt);
        \draw[thick] (AA4) -- (AA9); \coordinate (Mid_11) at ($(AA4)!0.5!(AA9)$); \filldraw[fill=white] (Mid_11) circle (0.8pt);
        \draw[thick] (AA5) -- (AA6); \coordinate (Mid_12) at ($(AA5)!0.5!(AA6)$); \filldraw[fill=white] (Mid_12) circle (0.8pt);
        \draw[thick] (AA5) -- (AA9); \coordinate (Mid_13) at ($(AA5)!0.5!(AA9)$); \filldraw[fill=white] (Mid_13) circle (0.8pt);
        \draw[thick] (AA5) -- (AA10);\coordinate (Mid_14) at ($(AA5)!0.5!(AA10)$); \filldraw[fill=white] (Mid_14) circle (0.8pt);
        \draw[thick] (AA6) -- (AA7); \coordinate (Mid_15) at ($(AA6)!0.5!(AA7)$); \filldraw[fill=white] (Mid_15) circle (0.8pt);
        \draw[thick] (AA6) -- (AA10); \coordinate (Mid_16) at ($(AA6)!0.5!(AA10)$); \filldraw[fill=white] (Mid_16) circle (0.8pt);
        \draw[thick] (AA6) -- (AA11); \coordinate (Mid_17) at ($(AA6)!0.5!(AA11)$); \filldraw[fill=white] (Mid_17) circle (0.8pt);
        \draw[thick] (AA7) -- (AA11); \coordinate (Mid_18) at ($(AA7)!0.5!(AA11)$); \filldraw[fill=white] (Mid_18) circle (0.8pt);
        \draw[thick] (AA7) -- (AA12); \coordinate (Mid_19) at ($(AA7)!0.5!(AA12)$); \filldraw[fill=white] (Mid_19) circle (0.8pt);
        \draw[thick] (AA8) -- (AA9); \coordinate (Mid_20) at ($(AA8)!0.5!(AA9)$); \filldraw[fill=white] (Mid_20) circle (0.8pt); 
        \draw[thick] (AA9) -- (AA10); \coordinate (Mid_21) at ($(AA9)!0.5!(AA10)$); \filldraw[fill=white] (Mid_21) circle (0.8pt);
        \draw[thick] (AA10) -- (AA11); \coordinate (Mid_22) at ($(AA10)!0.5!(AA11)$); \filldraw[fill=white] (Mid_22) circle (0.8pt);
        \draw[thick] (AA11) -- (AA12); \coordinate (Mid_23) at ($(AA11)!0.5!(AA12)$); \filldraw[fill=white] (Mid_23) circle (0.8pt);

        \foreach \point in {AA1, AA2, AA3, AA4, AA5, AA6, AA7, AA8, AA9, AA10, AA11, AA12} {
            \filldraw[fill=gray] (\point) circle (0.8pt);
        }

        \draw[thick] (-0.9, 0.1+0.75) -- (1.4, 0.1+0.75) -- (2.4, 1.6+0.75) -- (0.1, 1.6+0.75) -- (-0.9, 0.1+0.75);

        \node at (1.0, 0.3+0.75) {$\mathcal{N} = (3, 2, 0)$};
        \node at (-0.45, 0.3+0.75) {$\mathcal{M}$};

        \filldraw [fill = black] (-0.2, 0.3) circle (0.8pt);
        \filldraw [fill = white] (0.4, 0.3) circle (0.8pt);
        \filldraw [fill = gray]  (1.0, 0.3) circle (0.8pt);

        \node at (-0.4, 0.3) [left] {legend};
        \node at (-0.1, 0.3) [right] {$1$};
        \node at (0.5, 0.3) [right] {$2$};
        \node at (1.1, 0.3) [right] {$3$};

        \coordinate (A1) at (0.4854+3.5, 1.4158);
        \coordinate (A2) at (0.3762+3.5, 1.1632);
        \coordinate (A3) at (0.0905+3.5, 0.9947);
        \coordinate (A4) at (0.9476+3.5, 1.3316);
        \coordinate (A5) at (0.7571+3.5, 1.0789);
        \coordinate (A6) at (0.4714+3.5, 0.8263);
        \coordinate (A7) at (0.0905+3.5, 0.6579);
        \coordinate (A8) at (1.3286+3.5, 1.3316);
        \coordinate (A9) at (1.1381+3.5, 0.9947);
        \coordinate (A10) at (0.8724+3.5, 0.7421);
        \coordinate (A11) at (0.5467+3.5, 0.5037);
        \coordinate (A12) at (0.1857+3.5, 0.3053);

        \fill[white] (-0.9+3.35, 0.1) -- (1.4+3.35, 0.1) -- (2.4+3.35, 1.6) -- (0.1+3.35, 1.6) -- (-0.9+3.35, 0.1)--cycle;

        \draw[thick] (-0.9+3.35, 0.1) -- (1.4+3.35, 0.1) -- (2.4+3.35, 1.6) -- (0.1+3.35, 1.6) -- (-0.9+3.35, 0.1);

        \draw[thick] (A1) -- (A2); \draw[thick] (A1) -- (A4); \draw[thick] (A1) -- (A5);
        \draw[thick] (A2) -- (A3); \draw[thick] (A2) -- (A5); \draw[thick] (A2) -- (A6);
        \draw[thick] (A3) -- (A6); \draw[thick] (A3) -- (A7);
        \draw[thick] (A4) -- (A5); \draw[thick] (A4) -- (A8); \draw[thick] (A4) -- (A9);
        \draw[thick] (A5) -- (A6); \draw[thick] (A5) -- (A9); \draw[thick] (A5) -- (A10);
        \draw[thick] (A6) -- (A7); \draw[thick] (A6) -- (A10); \draw[thick] (A6) -- (A11);
        \draw[thick] (A7) -- (A11); \draw[thick] (A7) -- (A12);
        \draw[thick] (A8) -- (A9); \draw[thick] (A9) -- (A10); \draw[thick] (A10) -- (A11); \draw[thick] (A11) -- (A12);

        \coordinate (MM_1) at ($(A3)!0.5!(A7)$); \filldraw[fill=black] (MM_1) circle (0.8pt);
        \coordinate (MM_2) at ($(A7)!0.5!(A12)$); \filldraw[fill=black] (MM_2) circle (0.8pt);
        \coordinate (MM_3) at ($(A7)!0.5!(A6)$); \filldraw[fill=black] (MM_3) circle (0.8pt);
        \coordinate (MM_4) at ($(A7)!0.5!(A11)$); \filldraw[fill=black] (MM_4) circle (0.8pt);
        \coordinate (MM_5) at ($(A6)!0.5!(A11)$); \filldraw[fill=black] (MM_5) circle (0.8pt);
        \coordinate (MM_6) at ($(A11)!0.5!(A12)$); \filldraw[fill=black] (MM_6) circle (0.8pt);
        \coordinate (MM_7) at ($(A11)!0.5!(A10)$); \filldraw[fill=black] (MM_7) circle (0.8pt);
        \coordinate (MM_8) at ($(A6)!0.5!(A10)$); \filldraw[fill=black] (MM_8) circle (0.8pt);

        \foreach \point in {A1, A2, A3, A4, A5, A6, A7, A8, A9, A10, A11, A12} {
            \filldraw[fill=black] (\point) circle (0.8pt);
        }

        \coordinate (B1) at (0.4854+3.5, 1.4158+0.75);
        \coordinate (B2) at (0.3762+3.5, 1.1632+0.75);
        \coordinate (B3) at (0.0905+3.5, 0.9947+0.75);
        \coordinate (B4) at (0.9476+3.5, 1.3316+0.75);
        \coordinate (B5) at (0.7571+3.5, 1.0789+0.75);
        \coordinate (B6) at (0.4714+3.5, 0.8263+0.75);
        \coordinate (B7) at (0.0905+3.5, 0.6579+0.75);
        \coordinate (B8) at (1.3286+3.5, 1.3316+0.75);
        \coordinate (B9) at (1.1381+3.5, 0.9947+0.75);
        \coordinate (B10) at (0.8724+3.5, 0.7421+0.75);
        \coordinate (B11) at (0.5467+3.5, 0.5037+0.75);
        \coordinate (B12) at (0.1857+3.5, 0.3053+0.75);

        \fill[white] (-0.9+3.35, 0.1+0.75) -- (1.4+3.35, 0.1+0.75) -- (2.4+3.35, 1.6+0.75) -- (0.1+3.35, 1.6+0.75) -- (-0.9+3.35, 0.1+0.75)--cycle;

        \draw[thick] (-0.9+3.35, 0.1+0.75) -- (1.4+3.35, 0.1+0.75) -- (2.4+3.35, 1.6+0.75) -- (0.1+3.35, 1.6+0.75) -- (-0.9+3.35, 0.1+0.75);

        \draw[thick] (B1) -- (B2); \draw[thick] (B1) -- (B4); \draw[thick] (B1) -- (B5);
        \draw[thick] (B2) -- (B3); \draw[thick] (B2) -- (B5); \draw[thick] (B2) -- (B6);
        \draw[thick] (B3) -- (B6); \draw[thick] (B3) -- (B7);
        \draw[thick] (B4) -- (B5); \draw[thick] (B4) -- (B8); \draw[thick] (B4) -- (B9);
        \draw[thick] (B5) -- (B6); \draw[thick] (B5) -- (B9); \draw[thick] (B5) -- (B10);
        \draw[thick] (B6) -- (B7); \draw[thick] (B6) -- (B10); \draw[thick] (B6) -- (B11);
        \draw[thick] (B7) -- (B11); \draw[thick] (B7) -- (B12);
        \draw[thick] (B8) -- (B9); \draw[thick] (B9) -- (B10); \draw[thick] (B10) -- (B11); \draw[thick] (B11) -- (B12);

        \coordinate (M_1) at ($(B3)!0.5!(B7)$); \filldraw[fill=black] (M_1) circle (0.8pt);
        \coordinate (M_2) at ($(B7)!0.5!(B12)$); \filldraw[fill=black] (M_2) circle (0.8pt);
        \coordinate (M_3) at ($(B7)!0.5!(B6)$); \filldraw[fill=black] (M_3) circle (0.8pt);
        \coordinate (M_4) at ($(B7)!0.5!(B11)$); \filldraw[fill=black] (M_4) circle (0.8pt);
        \coordinate (M_5) at ($(B6)!0.5!(B11)$); \filldraw[fill=black] (M_5) circle (0.8pt);
        \coordinate (M_6) at ($(B11)!0.5!(B12)$); \filldraw[fill=black] (M_6) circle (0.8pt);
        \coordinate (M_7) at ($(B11)!0.5!(B10)$); \filldraw[fill=black] (M_7) circle (0.8pt);
        \coordinate (M_8) at ($(B6)!0.5!(B10)$); \filldraw[fill=black] (M_8) circle (0.8pt);

        \foreach \point in {B1, B2, B3, B4, B5, B6, B7, B8, B9, B10, B11, B12} {
            \filldraw[fill=black] (\point) circle (0.8pt);
        }

        \coordinate (C1) at (0.4854+3.5, 1.4158+1.5);
        \coordinate (C2) at (0.3762+3.5, 1.1632+1.5);
        \coordinate (C3) at (0.0905+3.5, 0.9947+1.5);
        \coordinate (C4) at (0.9476+3.5, 1.3316+1.5);
        \coordinate (C5) at (0.7571+3.5, 1.0789+1.5);
        \coordinate (C6) at (0.4714+3.5, 0.8263+1.5);
        \coordinate (C7) at (0.0905+3.5, 0.6579+1.5);
        \coordinate (C8) at (1.3286+3.5, 1.3316+1.5);
        \coordinate (C9) at (1.1381+3.5, 0.9947+1.5);
        \coordinate (C10) at (0.8724+3.5, 0.7421+1.5);
        \coordinate (C11) at (0.5467+3.5, 0.5037+1.5);
        \coordinate (C12) at (0.1857+3.5, 0.3053+1.5);

        \fill[white] (-0.9+3.35, 0.1+1.5) -- (1.4+3.35, 0.1+1.5) -- (2.4+3.35, 1.6+1.5) -- (0.1+3.35, 1.6+1.5) -- (-0.9+3.35, 0.1+1.5) -- cycle;

        \draw[thick] (-0.9+3.35, 0.1+1.5) -- (1.4+3.35, 0.1+1.5) -- (2.4+3.35, 1.6+1.5) -- (0.1+3.35, 1.6+1.5) -- (-0.9+3.35, 0.1+1.5);

        \draw[thick] (C1) -- (C2); \draw[thick] (C1) -- (C4); \draw[thick] (C1) -- (C5);
        \draw[thick] (C2) -- (C3); \draw[thick] (C2) -- (C5); \draw[thick] (C2) -- (C6);
        \draw[thick] (C3) -- (C6); \draw[thick] (C3) -- (C7);
        \draw[thick] (C4) -- (C5); \draw[thick] (C4) -- (C8); \draw[thick] (C4) -- (C9);
        \draw[thick] (C5) -- (C6); \draw[thick] (C5) -- (C9); \draw[thick] (C5) -- (C10);
        \draw[thick] (C6) -- (C7); \draw[thick] (C6) -- (C10); \draw[thick] (C6) -- (C11);
        \draw[thick] (C7) -- (C11); \draw[thick] (C7) -- (C12);
        \draw[thick] (C8) -- (C9); \draw[thick] (C9) -- (C10); \draw[thick] (C10) -- (C11); \draw[thick] (C11) -- (C12);

        \foreach \point in {C1, C2, C3, C4, C5, C6, C7, C8, C9, C10, C11, C12} {
            \filldraw[fill=black] (\point) circle (0.8pt);
        }

        \node at (1.0+3.35, 0.3) {$\mathcal{N}^{1} = (1, 1, 0)$};
        \node at (-0.45+3.35, 0.3) {$\mathcal{M}^{1}$};
        \node at (1.0+3.35, 0.3+0.75) {$\mathcal{N}^{2} = (1, 1, 0)$};
        \node at (-0.45+3.35, 0.3+0.75) {$\mathcal{M}^{2}$};
        \node at (1.0+3.35, 0.3+1.5) {$\mathcal{N}^{3} = (1, 0, 0)$};
        \node at (-0.45+3.35, 0.3+1.5) {$\mathcal{M}^{3}$};

        \node at (-0.7+3.35, 0.65) {$+$}; \node at (-0.7+3.35, 0.65+0.75) {$+$};
        \node at (2.2, 1.5) {$=$};
    \end{tikzpicture}
    \caption{Illustration of the splitting of $\mathcal{M}$. Here, we use Figure~\ref{multiphysics}(c) as an example. $\mathcal{N} = (3,2,0) = (1, 1, 0) + (1, 1, 0) + (1, 0, 0)$ represents the split into two velocity components and one pressure component. Then, the unknowns on $\mathcal{M}^{1}$, $\mathcal{M}^{2}$, and $\mathcal{M}^{3}$ correspond to the three physical variables are interpolated separately.}
    \label{fig: field decomposition}
\end{figure}
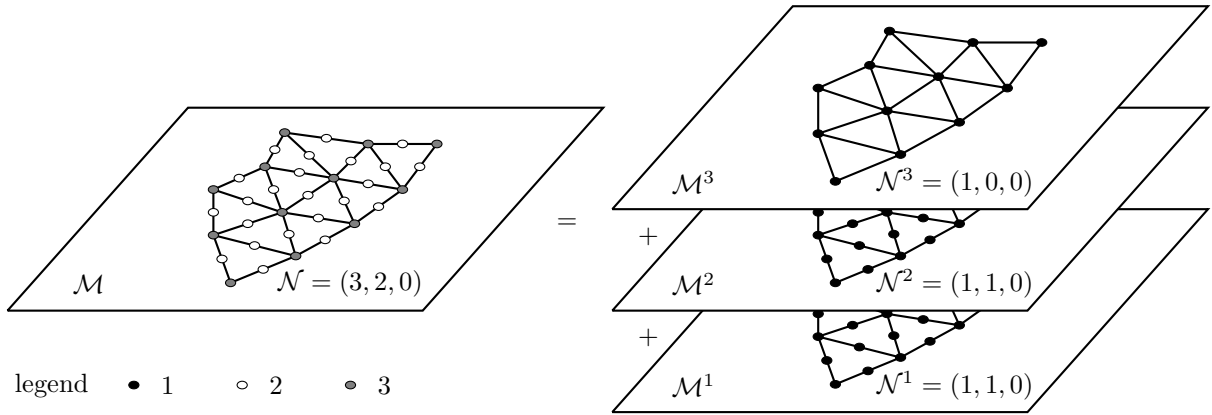

Our problem reduces to designing the interpolation operator $\mathcal{I}_{{\tilde{\mathcal{M}}}^{f}}^{{\mathcal{M}}^{f}}$ in \eqref{equ: interpolation split} associated with the $f_{th}$ physical variable for $f = 1, \dots, F$. 
In accordance with equation~\eqref{eq: domain decomposition}, as the domain $\Omega_{h}$ has been decomposed into $np$ subdomains on each mesh level, the unknowns and geometric nodes have been correspondingly allocated to $np$ processors. 
Specifically, the unknowns in each subdomain are denoted as $U_{{\mathcal{M}}^{f}, p}$ and $U_{{\tilde{\mathcal{M}}^{f}, p}}$, while the sets of geometric nodes are denoted by $X_{\mathcal{M}^{f}, p}$ and $X_{\tilde{\mathcal{M}}^{f}, p}$, where $p = 1, \dots, np$.
In our multilevel smoothed Schwarz algorithm, we employ the RAS preconditioners as smoothers on each mesh level. Consequently, the subdomains $\Omega_{\tilde{\mathcal{M}}, p}$ are extended to $\Omega^{\delta}_{\tilde{\mathcal{M}}, p}$ as described in Subsection~\ref{subsec: RAS} for $p = 1, \dots, np$, where the overlap is typically set as $\delta = 1$. 
Then, the local overlapping vectors $\bm u^{\delta}_{\mathcal{M}^{f},p}$, $\bm u^{\delta}_{\tilde{\mathcal{M}}^{f},p}$ and restriction operators ${\bm R}_{\mathcal{M}^{f}, p}^{\delta}$, ${\bm R}_{\tilde{\mathcal{M}}^{f}, p}^{\delta}$ are provided similarly to Subsection~\ref{subsec: RAS}.
As shown in Figure~\ref{fig: interpolation}, to interpolate an unknown $u_{*} \in U_{{\mathcal{M}}^{f}, p}$ located at a geometric node $\bm{x}_{*} \in X_{\mathcal{M}^{f}, p}$ in the $p_{th}$ subdomain, we introduce an overlapping MLS method based on the overlapping domain decomposition. 
We define $\tilde{N}(\bm x_{*}) := \{\tilde{{\bm x}}_{n} \in X_{\tilde{\mathcal{M}}^{f}}|\ \tilde{{\bm x}}_{n}\ \texttt{in}\ \Omega_{\tilde{\mathcal{M}}, p}^{\delta},  \|\tilde{{\bm x}}_{n}-{\bm x_{*}}\| < \rho_{\tilde{\mathcal{M}}}({\bm x}_{*})\} \subset X_{\tilde{\mathcal{M}}^{f}}$ as the neighborhood of $\bm x_{*}$ using an appropriate scale $\rho_{\tilde{\mathcal{M}}}({\bm x}_{*})$. With this definition, for every $\tilde{\bm x}_{n} \in \tilde{N}(\bm x_{*})$, there exists an unknown $\tilde{u}_{n} \in U_{\tilde{\mathcal{M}}^{f}}$ located at $\tilde{\bm x}_{n}$. Given a polynomial space $\mathcal{P}$ which bases can be presented as a column vector $\bm p(\bm x) = [p_1(\bm x), \dots, p_{|\mathcal{P}|}(\bm x)]^{\top}$, the MLS approximation from $\{ \tilde{u}_{n} \}$ to $u_{*} \in U_{{\mathcal{M}}^{f}, p}$ is achieved by finding $\bm a = (a_1, \dots, a_{|\mathcal{P}|})^{\top} \in \mathbb{R}^{|\mathcal{P}|}$ through the following minimization problem,
\begin{equation}
    \label{equ: min}
    \underset{\bm a \in \mathbb{R}^{|\mathcal{P}|}}{\texttt{arg min}} \ \mathcal{L}_{\tilde{\mathcal{M}}^{f}}(\bm a),
\end{equation}
where the loss $\mathcal{L}_{\tilde{\mathcal{M}}^{f}}(\bm a )$ with a positive weight function $w_{\tilde{\mathcal{M}}}$ \citep{gu2019generalized} is defined as
\begin{align}
    \label{equ: loss}
    &\mathcal{L}_{\tilde{\mathcal{M}}^{f}}(\bm a )\ \ \ = \sum_{\tilde{\bm x}_{n} \in \tilde{N}(\bm x_{*})} w_{\tilde{\mathcal{M}}}(\bm x_{*} - \tilde{\bm x}_{n}) [\bm p(\tilde{\bm x}_{n})^{\top}{\bm a} - \tilde{u}_{n}]^{2}, \\
    &w_{\tilde{\mathcal{M}}}(\bm x_{*} - \tilde{\bm x}_{n}) = 1 - 6(\|\bm x_{*} - \tilde{\bm x}_{n}\|/\rho_{\tilde{\mathcal{M}}}({\bm x}_{*}))^2 + 8 (\| \bm x_{*} - \tilde{\bm x}_{n}\|/\rho_{\tilde{\mathcal{M}}}({\bm x}_{*}))^3 - 3 (\|\bm x_{*} - \tilde{\bm x}_{n}\|/\rho_{\tilde{\mathcal{M}}}({\bm x}_{*}))^4.
\end{align}

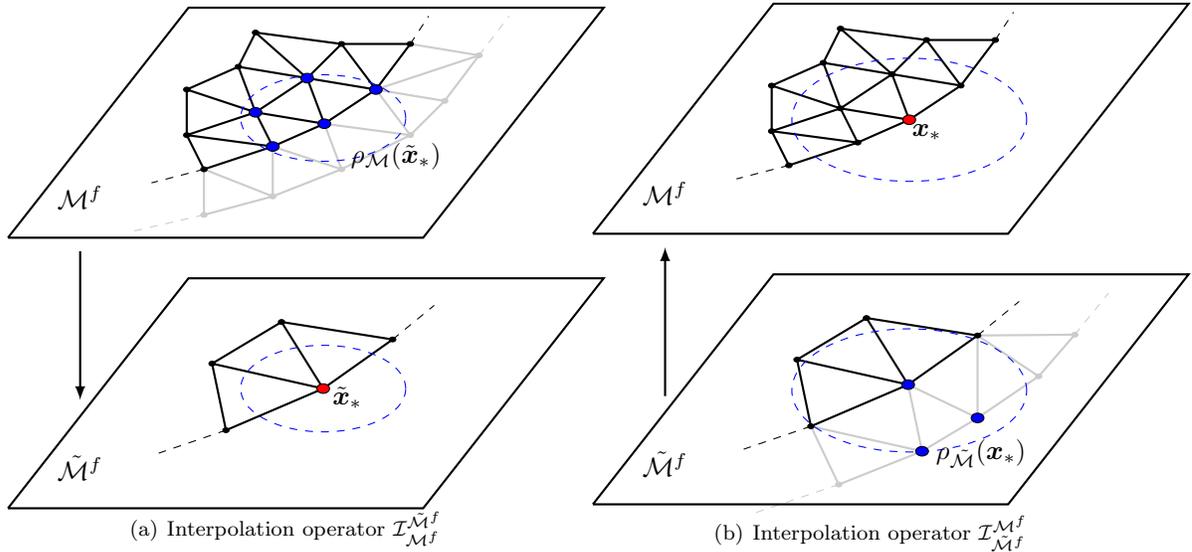
\begin{figure}[H]
    \centering
    \subfigure[Interpolation operator $\mathcal{I}_{\mathcal{M}^{f}}^{\tilde{\mathcal{M}}^{f}}$]{
    \begin{minipage}[]{0.45\linewidth}
    \centering
    \begin{tikzpicture} [xscale=2.4, yscale = 1.8, every node/.style={scale=1}]
        \coordinate (A1) at (0.4714, 1.4158+2.0);
        \coordinate (A2) at (0.3762, 1.1632+2.0);
        \coordinate (A3) at (0.0905, 0.9947+2.0);
        \coordinate (A4) at (0.9476, 1.3316+2.0);
        \coordinate (A5) at (0.7571, 1.0789+2.0);
        \coordinate (A6) at (0.4714, 0.8263+2.0);
        \coordinate (A7) at (0.0905, 0.6579+2.0);
        \coordinate (A8) at (1.3286, 1.3316+2.0);
        \coordinate (A9) at (1.1381, 0.9947+2.0);
        \coordinate (A10) at (0.8524, 0.7421+2.0);
        \coordinate (A11) at (0.5667, 0.5737+2.0);
        \coordinate (A12) at (0.1857, 0.4053+2.0);
        \coordinate (A13) at (1.7095, 1.2474+2.0);
        \coordinate (A14) at (1.5190, 0.9105+2.0);
        \coordinate (A15) at (1.3286, 0.6579+2.0);
        \coordinate (A16) at (0.9476, 0.4053+2.0);
        \coordinate (A17) at (0.5667, 0.2052+2.0);
        \coordinate (A18) at (0.1857, 0.0684+2.0);

        \draw[thick] (A1) -- (A2); \draw[thick] (A1) -- (A4); \draw[thick] (A1) -- (A5);
        \draw[thick] (A2) -- (A3); \draw[thick] (A2) -- (A5); \draw[thick] (A2) -- (A6);
        \draw[thick] (A3) -- (A6); \draw[thick] (A3) -- (A7);
        \draw[thick] (A4) -- (A5); \draw[thick] (A4) -- (A8); \draw[thick] (A4) -- (A9);
        \draw[thick] (A5) -- (A6); \draw[thick] (A5) -- (A9); \draw[thick] (A5) -- (A10);
        \draw[thick] (A6) -- (A7); \draw[thick] (A6) -- (A10); \draw[thick] (A6) -- (A11);
        \draw[thick] (A7) -- (A11); \draw[thick] (A7) -- (A12);
        \draw[thick] (A8) -- (A9); \draw[thick] (A9) -- (A10); \draw[thick] (A10) -- (A11); \draw[thick] (A11) -- (A12);

        \draw[gray!40, thick] (A8) -- (A13); \draw[gray!40, thick] (A9) -- (A13);
        \draw[gray!40, thick] (A9) -- (A14); \draw[gray!40, thick] (A13) -- (A14);
        \draw[gray!40, thick] (A9) -- (A15); \draw[gray!40, thick] (A14) -- (A15);
        \draw[gray!40, thick] (A10) -- (A15); \draw[gray!40, thick] (A10) -- (A16); \draw[gray!40, thick] (A15) -- (A16);
        \draw[gray!40, thick] (A11) -- (A16); \draw[gray!40, thick] (A11) -- (A17); \draw[gray!40, thick] (A16) -- (A17);
        \draw[gray!40, thick] (A12) -- (A17); \draw[gray!40, thick] (A12) -- (A18); \draw[gray!40, thick] (A17) -- (A18);

        \foreach \point in {A1, A2, A3, A4, A5, A6, A7, A8, A9, A10, A11, A12} {
            \filldraw[fill=black] (\point) circle (0.5pt);
        }
        \foreach \point in {A13, A14, A15, A16, A17, A18} {
            \filldraw[gray!40] (\point) circle (0.5pt);
        }

        \draw[dashed] (A12) -- (-0.1048, 0.3053+2.0); \draw[dashed] (A8) -- (1.4299, 1.5376+2.0);
        \draw[gray!40, dashed] (A18) -- (-0.2, -0.05+2.0); \draw[gray!40, dashed] (A13) -- (1.8764, 1.5389+2.0);

        \draw[thick] (-0.9, 1.9) -- (1.4, 1.9) -- (2.4, 3.6) -- (0.1, 3.6) -- (-0.9, 1.9);

        \coordinate (B1) at (0.6154, 1.2769);
        \coordinate (B2) at (0.2308, 0.9692);
        \coordinate (B3) at (1.2308, 1.1477);
        \coordinate (B4) at (0.8462, 0.7846);
        \coordinate (B5) at (0.3077, 0.4769);
        \coordinate (B6) at (1.7693, 1.1539);
        \coordinate (B7) at (1.5692, 0.8462);
        \coordinate (B8) at (1.2308, 0.5385);
        \coordinate (B9) at (0.9231, 0.2923);
        \coordinate (B10) at (0.4616, 0.0462);

        \draw[thick] (B1) -- (B2); \draw[thick] (B1) -- (B3); \draw[thick] (B1) -- (B4);
        \draw[thick] (B2) -- (B4); \draw[thick] (B2) -- (B5);
        \draw[thick] (B3) -- (B4); \draw[thick] (B4) -- (B5);

        
        \foreach \point in {B1, B2, B3, B4, B5} {
            \filldraw[fill=black] (\point) circle (0.5pt);
        }

        \filldraw[fill=red] (B4) circle (1.0pt);
        \foreach \point in {A5, A6, A9, A10, A11} {
            \filldraw[fill=blue] (\point) circle (1.0pt);
        }
        \draw[blue, dashed] (B4) ellipse (13pt and 9.1pt);
        \draw[blue, dashed] (0.8462, 0.7846+2.0) ellipse (13pt and 9.1pt);

        \draw[dashed] (B5) -- (-0.0648, 0.3075); \draw[dashed] (B3) -- (1.4615, 1.4);

        \draw[thick] (-0.9, -0.1) -- (1.4, -0.1) -- (2.4, 1.6) -- (0.1, 1.6) -- (-0.9, -0.1);

        \draw[->, >=latex, line width=1.0pt, thick, black] (-0.5, 1.8) -- (-0.5, 0.7);

        \node at (-0.5, 0.2) {$\tilde{\mathcal{M}}^{f}$};
        \node at (-0.5, 2.2) {$\mathcal{M}^{f}$};

        \node at (1.25, 2.5) {$\rho_{\mathcal{M}}(\tilde{\bm x}_{*})$};
        \node at (0.98, 0.72) {$\tilde{\bm x}_{*}$};
    \end{tikzpicture}
    \end{minipage}
    }
    \subfigure[Interpolation operator $\mathcal{I}_{\tilde{\mathcal{M}}^{f}}^{\mathcal{M}^{f}}$]{
    \begin{minipage}[]{0.45\linewidth}
    \centering
    \begin{tikzpicture} [xscale=2.4, yscale = 1.8, every node/.style={scale=1}]
        \coordinate (A1) at (0.4714, 1.4158+2.0);
        \coordinate (A2) at (0.3762, 1.1632+2.0);
        \coordinate (A3) at (0.0905, 0.9947+2.0);
        \coordinate (A4) at (0.9476, 1.3316+2.0);
        \coordinate (A5) at (0.7571, 1.0789+2.0);
        \coordinate (A6) at (0.4714, 0.8263+2.0);
        \coordinate (A7) at (0.0905, 0.6579+2.0);
        \coordinate (A8) at (1.3286, 1.3316+2.0);
        \coordinate (A9) at (1.1381, 0.9947+2.0);
        \coordinate (A10) at (0.8524, 0.7421+2.0);
        \coordinate (A11) at (0.5667, 0.5737+2.0);
        \coordinate (A12) at (0.1857, 0.4053+2.0);
        \coordinate (A13) at (1.7095, 1.2474+2.0);
        \coordinate (A14) at (1.5190, 0.9105+2.0);
        \coordinate (A15) at (1.3286, 0.6579+2.0);
        \coordinate (A16) at (0.9476, 0.4053+2.0);
        \coordinate (A17) at (0.5667, 0.2052+2.0);
        \coordinate (A18) at (0.1857, 0.0684+2.0);

        \draw[thick] (A1) -- (A2); \draw[thick] (A1) -- (A4); \draw[thick] (A1) -- (A5);
        \draw[thick] (A2) -- (A3); \draw[thick] (A2) -- (A5); \draw[thick] (A2) -- (A6);
        \draw[thick] (A3) -- (A6); \draw[thick] (A3) -- (A7);
        \draw[thick] (A4) -- (A5); \draw[thick] (A4) -- (A8); \draw[thick] (A4) -- (A9);
        \draw[thick] (A5) -- (A6); \draw[thick] (A5) -- (A9); \draw[thick] (A5) -- (A10);
        \draw[thick] (A6) -- (A7); \draw[thick] (A6) -- (A10); \draw[thick] (A6) -- (A11);
        \draw[thick] (A7) -- (A11); \draw[thick] (A7) -- (A12);
        \draw[thick] (A8) -- (A9); \draw[thick] (A9) -- (A10); \draw[thick] (A10) -- (A11); \draw[thick] (A11) -- (A12);


        \foreach \point in {A1, A2, A3, A4, A5, A6, A7, A8, A9, A10, A11, A12} {
            \filldraw[fill=black] (\point) circle (0.5pt);
        }

        \draw[dashed] (A12) -- (-0.1048, 0.3053+2.0); \draw[dashed] (A8) -- (1.4299, 1.5376+2.0);

        \draw[thick] (-0.9, 1.9) -- (1.4, 1.9) -- (2.4, 3.6) -- (0.1, 3.6) -- (-0.9, 1.9);

        \coordinate (B1) at (0.6154, 1.2769);
        \coordinate (B2) at (0.2308, 0.9692);
        \coordinate (B3) at (1.2308, 1.1477);
        \coordinate (B4) at (0.8462, 0.7846);
        \coordinate (B5) at (0.3077, 0.4769);
        \coordinate (B6) at (1.7693, 1.1539);
        \coordinate (B7) at (1.5692, 0.8462);
        \coordinate (B8) at (1.2308, 0.5385);
        \coordinate (B9) at (0.9231, 0.2923);
        \coordinate (B10) at (0.4616, 0.0462);

        \draw[thick] (B1) -- (B2); \draw[thick] (B1) -- (B3); \draw[thick] (B1) -- (B4);
        \draw[thick] (B2) -- (B4); \draw[thick] (B2) -- (B5);
        \draw[thick] (B3) -- (B4); \draw[thick] (B4) -- (B5);

        \draw[gray!40, thick] (B3) -- (B6); \draw[gray!40, thick] (B3) -- (B7); \draw[gray!40, thick] (B6) -- (B7);
        \draw[gray!40, thick] (B3) -- (B8); \draw[gray!40, thick] (B7) -- (B8);
        \draw[gray!40, thick] (B4) -- (B8); \draw[gray!40, thick] (B4) -- (B9); \draw[gray!40, thick] (B8) -- (B9);
        \draw[gray!40, thick] (B5) -- (B9); \draw[gray!40, thick] (B5) -- (B10); \draw[gray!40, thick] (B9) -- (B10);
        
        \foreach \point in {B1, B2, B3, B4, B5} {
            \filldraw[fill=black] (\point) circle (0.5pt);
        }
        \foreach \point in {B6, B7, B8, B9, B10} {
            \filldraw[gray!40] (\point) circle (0.5pt);
        }

        \filldraw[fill=red] (A10) circle (1.0pt);
        \draw[blue, dashed] (A10) ellipse (18.5pt and 12.95pt);
        \draw[blue, dashed] (0.8524, 0.7421) ellipse (18.5pt and 12.95pt);
        \foreach \point in {B4, B8, B9} {
            \filldraw[fill=blue] (\point) circle (1.0pt);
        }

        \draw[dashed] (B5) -- (-0.0648, 0.3075); \draw[dashed] (B3) -- (1.4615, 1.4);
        \draw[gray!40, dashed] (B10) -- (0, -0.1623); \draw[gray!40, dashed] (B6) -- (1.9556, 1.46);

        \draw[thick] (-0.9, -0.1) -- (1.4, -0.1) -- (2.4, 1.6) -- (0.1, 1.6) -- (-0.9, -0.1);

        \draw[->, >=latex, line width=1.0pt, thick, black] (-0.5, 0.7) -- (-0.5, 1.8);

        \node at (-0.5, 0.2) {$\tilde{\mathcal{M}}^{f}$};
        \node at (-0.5, 2.2) {$\mathcal{M}^{f}$};
        \node at (1.25, 0.28) {$\rho_{\tilde{\mathcal{M}}}(\bm x_{*})$};
        \node at (0.95, 2.65) {$\bm x_{*}$};
    \end{tikzpicture}
    \end{minipage}
    }
    \caption{Schematic description of non-nested interpolation operators (a) $I_{\mathcal{M}^{f}}^{\tilde{\mathcal{M}}^{f}}$ and (b) $I^{\mathcal{M}^{f}}_{\tilde{\mathcal{M}}^{f}}$. Here, we take $\mathcal{M}^{3}$ in Figure~\ref{fig: field decomposition} as an example. The black parts represent subdomains, while the gray parts denote $\delta = 1$ overlapping layers. The red nodes represent the target nodes ${\bm x}_{*}$, while the blue nodes denote $\tilde{N}({\bm x}_{*})$.}
    \label{fig: interpolation}
\end{figure}

Let us now present the unique solvability of Problem~\eqref{equ: min}, which demonstrates the feasibility of designing interpolation operator using the MLS approximation $u_{*} = {\bm p}(\bm x_{*})^{\top}\bm a$.
\begin{lemma}
    \label{lem: convex}
    \(\mathcal{L}_{\tilde{\mathcal{M}}^{f}}(\bm a)\) is convex with respect to \(\bm a \in \mathbb{R}^{|\mathcal{P}|}\).
\end{lemma}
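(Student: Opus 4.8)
The plan is to exhibit $\mathcal{L}_{\tilde{\mathcal{M}}^{f}}$ as a finite nonnegative combination of convex functions. First I would fix an arbitrary node $\tilde{\bm x}_{n} \in \tilde{N}(\bm x_{*})$ and note that the map $\bm a \mapsto \bm p(\tilde{\bm x}_{n})^{\top}\bm a - \tilde{u}_{n}$ is affine in $\bm a$, since $\bm p(\tilde{\bm x}_{n})$ and $\tilde{u}_{n}$ are independent of $\bm a$. Squaring an affine function produces a convex function on $\mathbb{R}^{|\mathcal{P}|}$ (it is the composition of the convex map $t \mapsto t^{2}$ with an affine map), so each summand $w_{\tilde{\mathcal{M}}}(\bm x_{*} - \tilde{\bm x}_{n})\,[\bm p(\tilde{\bm x}_{n})^{\top}\bm a - \tilde{u}_{n}]^{2}$ is a nonnegative scalar multiple of a convex function, the weight being positive by assumption. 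A finite sum of convex functions is convex, which yields the claim; here I would also record explicitly that $\tilde{N}(\bm x_{*})$ is a finite set, so the sum is well defined.

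Equivalently, and perhaps more transparently, I would argue via the Hessian. Since $\mathcal{L}_{\tilde{\mathcal{M}}^{f}}$ is a quadratic function of $\bm a$, a direct differentiation gives
\[
\nabla^{2}_{\bm a}\,\mathcal{L}_{\tilde{\mathcal{M}}^{f}}(\bm a) \;=\; 2\sum_{\tilde{\bm x}_{n} \in \tilde{N}(\bm x_{*})} w_{\tilde{\mathcal{M}}}(\bm x_{*} - \tilde{\bm x}_{n})\, \bm p(\tilde{\bm x}_{n})\,\bm p(\tilde{\bm x}_{n})^{\top},
\]
a sum of the rank-one positive semidefinite matrices $\bm p(\tilde{\bm x}_{n})\bm p(\tilde{\bm x}_{n})^{\top}$ scaled by the nonnegative weights $w_{\tilde{\mathcal{M}}}(\bm x_{*}-\tilde{\bm x}_{n})$; hence it is positive semidefinite on all of $\mathbb{R}^{|\mathcal{P}|}$, which is equivalent to convexity of $\mathcal{L}_{\tilde{\mathcal{M}}^{f}}$.

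The only point warranting a word of care is the sign of the weights. I would either invoke the standing hypothesis that $w_{\tilde{\mathcal{M}}}$ is a positive weight function, or verify it from the explicit quartic spline: writing $t := \|\bm x_{*}-\tilde{\bm x}_{n}\|/\rho_{\tilde{\mathcal{M}}}(\bm x_{*}) \in [0,1)$ for $\tilde{\bm x}_{n}\in\tilde N(\bm x_*)$, one has $w = 1 - 6t^{2} + 8t^{3} - 3t^{4}$ with $w(0)=1$, $w(1)=0$, and $w'(t) = -12t(1-t)^{2} \le 0$ on $[0,1]$, so $w \ge 0$ there. There is essentially no obstacle in this lemma: it is the structural observation that the MLS objective is a weighted least-squares quadratic form, so the proof is a short verification rather than a substantive argument, and the main thing to get right is simply flagging finiteness of $\tilde{N}(\bm x_{*})$ and nonnegativity of the weights entering the sum.
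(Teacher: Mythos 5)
Your proposal is correct and follows essentially the same route as the paper: both arguments rest on the observation that each summand is a positive weight times the square of an affine function of \(\bm a\), the only difference being that the paper verifies convexity of each term by the explicit identity \(t(1-t)[\bm p(\tilde{\bm x}_{n})^{\top}(\bm a_1-\bm a_2)]^2 \geq 0\) rather than citing the composition rule or the positive-semidefinite Hessian \(2\sum_n w_n\, \bm p(\tilde{\bm x}_{n})\bm p(\tilde{\bm x}_{n})^{\top}\). Your explicit check that the quartic weight satisfies \(w'(t)=-12t(1-t)^2\le 0\), hence \(w>0\) on \([0,1)\), is a point the paper merely asserts, and is a worthwhile addition.
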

\begin{proof}
    Let us consider \({\bm a}_1, \bm a_2 \in \mathbb{R}^{|\mathcal{P}|}\), and \(0 \leq t \leq 1\). Then, for all \(\tilde{\bm x}_{n} \in \tilde{N}({\bm x}_{*})\), we have
    \begin{equation}
        \label{equ: convex}
        \begin{aligned}
        &t[\bm p(\tilde{\bm x}_{n})^{\top}{\bm a}_{1}-\tilde{u}_{n}]^2 + (1-t)[\bm p(\tilde{\bm x}_{n})^{\top} {\bm a}_{2}-\tilde{u}_{n}]^2 \\
        &\quad - [\bm p(\tilde{\bm x}_{n})^{\top} (t{\bm a}_{1}+(1-t) {\bm a}_2) - \tilde{u}_{n}]^2\\
        &= t(1-t)[\bm p(\tilde{\bm x}_{n})^{\top}({\bm a}_{1}-{\bm a}_{2})]^2 \geq 0.
        \end{aligned}
    \end{equation}
    Then, we multiply \eqref{equ: convex} by the weight \(w_{\tilde{\mathcal{M}}}({\bm x}_{*} - \tilde{\bm x}_{n})\) and sum the results over \(\tilde{\bm x}_{n} \in \tilde{N}({\bm x}_{*})\). Since \(w_{\tilde{\mathcal{M}}}({\bm x}_{*} - \tilde{\bm x}_{n}) > 0\) for \(\|{\bm x}_{*} - \tilde{\bm x}_{n}\| < \rho_{\tilde{\mathcal{M}}}(\bm x_{*})\), it follows that
    \begin{equation}
        \mathcal{L}_{\tilde{\mathcal{M}}^{f}}(t\bm a_1 + (1-t)\bm a_2) \leq t \mathcal{L}_{\tilde{\mathcal{M}}^{f}}(\bm a_1) + (1-t) \mathcal{L}_{\tilde{\mathcal{M}}^{f}}(\bm a_2),
    \end{equation} 
    which completes the proof.
\end{proof}
Given a matrix ${\bm P} := [{\bm p}({\tilde{\bm x}}_{1}),\dots,{\bm p}({\tilde{\bm x}}_{|\tilde{N}(\bm x_{*})|})]\in \mathbb{R}^{|\mathcal{P}| \times |\tilde{N}(\bm x_{*})|}$, the following Theorem~\ref{thm: solvable} demonstrates that Problem~\eqref{equ: min} admits a unique solution under certain conditions.
\begin{theorem}
    \label{thm: solvable}
    If $\mathrm{rank} (\bm P) = |\mathcal{P}|$, Problem~\eqref{equ: min} admits a unique solution.
\end{theorem}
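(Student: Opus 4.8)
The plan is to recast Problem~\eqref{equ: min} as an unconstrained quadratic minimization and reduce unique solvability to the invertibility of the coefficient matrix of the associated normal equations. First I would assemble the data in matrix form: set $N := |\tilde N(\bm x_{*})|$, let $\bm W := \mathrm{diag}(w_1,\dots,w_N)$ with $w_n := w_{\tilde{\mathcal{M}}}(\bm x_{*} - \tilde{\bm x}_{n})$, and let $\tilde{\bm u} := (\tilde u_1,\dots,\tilde u_N)^{\top}$. With the matrix $\bm P \in \mathbb{R}^{|\mathcal{P}| \times N}$ defined just before the theorem, the $n_{th}$ entry of $\bm P^{\top}\bm a$ equals $\bm p(\tilde{\bm x}_{n})^{\top}\bm a$, so the loss \eqref{equ: loss} becomes $\mathcal{L}_{\tilde{\mathcal{M}}^{f}}(\bm a) = (\bm P^{\top}\bm a - \tilde{\bm u})^{\top}\bm W (\bm P^{\top}\bm a - \tilde{\bm u})$. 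This is a quadratic function of $\bm a$, with gradient $\nabla \mathcal{L}_{\tilde{\mathcal{M}}^{f}}(\bm a) = 2\,\bm P \bm W (\bm P^{\top}\bm a - \tilde{\bm u})$ and constant Hessian $2\,\bm P \bm W \bm P^{\top} \in \mathbb{R}^{|\mathcal{P}|\times|\mathcal{P}|}$.

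The key step is to show $\bm P \bm W \bm P^{\top}$ is symmetric positive definite. For any $\bm v \in \mathbb{R}^{|\mathcal{P}|}\setminus\{\bm 0\}$ we have $\bm v^{\top}\bm P \bm W \bm P^{\top}\bm v = (\bm P^{\top}\bm v)^{\top}\bm W(\bm P^{\top}\bm v) = \sum_{n=1}^{N} w_n\,(\bm P^{\top}\bm v)_n^{2}$. Since $\mathrm{rank}(\bm P) = |\mathcal{P}|$, the linear map $\bm P^{\top}$ is injective, hence $\bm P^{\top}\bm v \neq \bm 0$; and each $w_n > 0$ because the weight is strictly positive on $\|\bm x_{*} - \tilde{\bm x}_{n}\| < \rho_{\tilde{\mathcal{M}}}(\bm x_{*})$, exactly as already used in the proof of Lemma~\ref{lem: convex}. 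Therefore the sum is strictly positive, so $\bm P \bm W \bm P^{\top}$ is positive definite and in particular invertible.

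It then follows that the stationarity condition $\nabla \mathcal{L}_{\tilde{\mathcal{M}}^{f}}(\bm a) = \bm 0$ is equivalent to the normal equation $\bm P \bm W \bm P^{\top}\bm a = \bm P \bm W \tilde{\bm u}$, which has the single solution $\hat{\bm a} := (\bm P \bm W \bm P^{\top})^{-1}\bm P \bm W \tilde{\bm u}$. Because $\mathcal{L}_{\tilde{\mathcal{M}}^{f}}$ is a polynomial in $\bm a$, it is differentiable on all of $\mathbb{R}^{|\mathcal{P}|}$, so every minimizer is a critical point; and by the convexity established in Lemma~\ref{lem: convex} — in fact strict convexity, since the Hessian is positive definite — every critical point is the global minimizer. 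Hence the minimizer set coincides with $\{\hat{\bm a}\}$, proving that Problem~\eqref{equ: min} admits a unique solution.

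The only delicate point — the main obstacle, such as it is — is that one must establish \emph{existence}, not merely uniqueness: strict convexity alone guarantees at most one minimizer, so the infimum must also be shown to be attained. This is handled above by producing $\hat{\bm a}$ explicitly (equivalently, one can note that positive definiteness of the Hessian makes $\mathcal{L}_{\tilde{\mathcal{M}}^{f}}$ coercive, so a minimizer exists and strict convexity makes it unique). A minor bookkeeping remark is that $\mathrm{rank}(\bm P) = |\mathcal{P}|$ forces $N \geq |\mathcal{P}|$, i.e. the neighborhood $\tilde N(\bm x_{*})$ must contain at least $|\mathcal{P}|$ nodes in sufficiently general position, which is implicit in the hypothesis and is addressed in practice by choosing the scale $\rho_{\tilde{\mathcal{M}}}(\bm x_{*})$ appropriately.
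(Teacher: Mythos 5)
Your proposal is correct and follows essentially the same route as the paper: both reduce the problem to the weighted normal equations and use $\mathrm{rank}(\bm P) = |\mathcal{P}|$ together with the strict positivity of the weights to show that the Gram matrix ($\bm P \bm W \bm P^{\top}$, which is the paper's $\bm P_{1}(\bm x_{*}) = \bm P \bm D \bm P^{\top}$) is invertible, yielding the unique solution. Your explicit remarks on positive definiteness, coercivity, and existence of the minimizer are a slightly more careful packaging of the same argument.
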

\begin{proof}
    By Lemma~\ref{lem: convex},  it suffices to prove  $\mathcal{L}_{\tilde{\mathcal{M}}^{f}}(\bm a)$ has a unique stationary point. Our task now is to calculate $\nabla_{\bm a} \mathcal{L}_{\tilde{\mathcal{M}}^{f}}(\bm a) = 0$, which is equivalent to the following linear system
    \begin{equation}
        \label{equ: stationary point}
        {\bm P}_{1}({\bm x}_{*}) {\bm a} = {\bm P}_{2}({\bm x}_{*}) \tilde{\bm u}(\bm x_{*}).
    \end{equation}
    Here, ${\bm P}_{1}({\bm x}_{*}) \in \mathbb{R}^{|\mathcal{P}|\times|\mathcal{P}|}$, ${\bm P}_{2}({\bm x}_{*}) \in \mathbb{R}^{|\mathcal{P}|\times|\tilde{N}(\bm x_{*})|}$, and $\tilde{\bm u}({\bm x}_{*}) \in \mathbb{R}^{|\tilde{N}(\bm x_{*})|}$ are given by
    \begin{align}
        &({\bm P}_{1}({\bm x}_{*}))_{j, k}\ =\ \sum_{n=1}^{|\tilde{N}(\bm x_{*})|} w_{\tilde{\mathcal{M}}}(\bm x_{*} - \tilde{\bm x}_{n}) p_j(\tilde{\bm x}_n) p_k(\tilde{\bm x}_n),\ \ \ \ \ \ \ j, k = 1, \dots, |\mathcal{P}|,\\
        &({\bm P}_{2}({\bm x}_{*}))_{k, n}\ =\ w_{\tilde{\mathcal{M}}}(\bm x_{*} - \tilde{\bm x}_{n})p_k(\tilde{\bm x}_{n}),\ \ k = 1, \dots, |\mathcal{P}|, n = 1, \dots, |\tilde{N}(\bm x_{*})|,\\
        &\ \ (\tilde{\bm u}(\bm x_{*}))_{n} \ \ \ =\ \tilde{u}_n,\ \ \ \ \ \ \ \ \ \ \ \ \ \ \ \ \ \ \ \ \ \ \ \ \ \ \ \ \ \ \ \ \ \ \ \ \ \ \ \ \ \ \ \ \ n = 1, \dots, |\tilde{N}(\bm x_{*})|.
    \end{align}
    Defining a positive-definite matrix $\bm D : = \texttt{diag} (w_{\tilde{\mathcal{M}}}(\bm x_{*} - \tilde{\bm x}_{1}), \dots, w_{\tilde{\mathcal{M}}}(\bm x_{*} - \tilde{\bm x}_{|\tilde{N}(\bm x_{*})|})) \in \mathbb{R}^{|\tilde{N}(\bm x_{*})| \times |\tilde{N}(\bm x_{*})|}$, we have
    \begin{equation}
        \mathrm{rank}(\bm P_{1} (\bm x_{*})) = \mathrm{rank}({\bm P}{\bm D}{\bm P}^{\top})
        = \mathrm{rank}(\sqrt{\bm D} {\bm P}^{\top}) = \mathrm{rank}({\bm P}) = |\mathcal{P}|, 
    \end{equation}
    which indicates that Equation~\eqref{equ: stationary point} has a unique solution $\bm a = {\bm P}_{1}({\bm x}_{*})^{-1}{\bm P}_{2}({\bm x}_{*}) \tilde{\bm u}({\bm x}_{*})$. The proof is completed.
\end{proof}
Theorem~\ref{thm: solvable} establishes the feasibility of designing $\mathcal{I}_{{\tilde{\mathcal{M}}}^{f}}^{{\mathcal{M}}^{f}}$ using the MLS method. It suffices to gradually increase $\rho_{\mathcal{M}}(\bm x_{*})$ at $\bm x_{*}$ such that $\mathrm{rank} (\bm P) = |\mathcal{P}|$, which is readily achievable. Then, let us define an operator ${\bm R}_{\tilde{N}(\bm x_{*})}$ that extracts $\tilde{\bm u}(\bm x_{*})$ from the local overlapping vector ${\bm u}_{\tilde{\mathcal{M}}^{f}, p}^{\delta}$, we have the MLS approximation
\begin{equation}
    \begin{aligned}
    &u_{*} = {\bm p}(\bm x_{*})^{\top} \bm a = {\bm p}(\bm x_{*})^{\top} {\bm P}_{1}({\bm x}_{*})^{-1}{\bm P}_{2}({\bm x}_{*}) \tilde{\bm u}({\bm x}_{*}) \\
    &\quad = {\bm p}(\bm x_{*})^{\top} {\bm P}_{1}({\bm x}_{*})^{-1}{\bm P}_{2}({\bm x}_{*}) {\bm R}_{\tilde{N}(\bm x_{*})} {\bm u}_{\tilde{\mathcal{M}}^{f}, p}^{\delta}.
    \end{aligned}
\end{equation}
Then, let $(*)$ range over $1, 2, \dots, N: = |U_{\mathcal{M}^{f}, p}|$, the operator $\mathcal{I}_{\tilde{\mathcal{M}}^{f}}^{\mathcal{M}^{f}}$ is given by
\begin{equation}
    \label{equ: interpolation f}
    \mathcal{I}_{\tilde{\mathcal{M}}^{f}}^{\mathcal{M}^{f}} = \sum_{p=1}^{np} ({\bm R}_{\mathcal{M}^{f}, p}^{0})^{\top}
    \begin{bmatrix}
        {\bm p}(\bm x_{1})^{\top} {\bm P}_{1}({\bm x}_{1})^{-1}{\bm P}_{2}({\bm x}_{1})  {\bm R}_{\tilde{N}(\bm x_{1})} \\
        {\bm p}(\bm x_{2})^{\top} {\bm P}_{1}({\bm x}_{2})^{-1}{\bm P}_{2}({\bm x}_{2})  {\bm R}_{\tilde{N}(\bm x_{2})} \\
        \vdots \\
        {\bm p}(\bm x_{N})^{\top} {\bm P}_{1}({\bm x}_{N})^{-1}{\bm P}_{2}({\bm x}_{N})  {\bm R}_{\tilde{N}(\bm x_{N})}
    \end{bmatrix}
    {\bm R}_{\tilde{\mathcal{M}}^{f}, p}^{\delta}.
\end{equation}
From Equation~\eqref{equ: interpolation f}, the operator $\mathcal{I}_{\tilde{\mathcal{M}}}^{\mathcal{M}}$ is immediately obtained by using Equation~\eqref{equ: interpolation split}.

\subsection{Algorithm workflow}
We incorporate the above coarsening Algorithm~\ref{alg: coarsening} and the interpolation operator $\mathcal{I}_{\tilde{\mathcal{M}}}^{\mathcal{M}}$ into the V-cycle multilevel smoothed Schwarz framework, then propose the following Algorithm~\ref{alg: workflow}.

\begin{algorithm} [H]
\caption{The parallel non-nested multilevel smoothed Schwarz preconditioner}
\label{alg: workflow}
\begin{algorithmic}[1]
\State \textbf{Input:} Finest mesh $\mathcal{M}_{0}$ and a vector $ {\bm b}_{\mathcal{M}_{0}}$ on $\mathcal{M}_{0}$.
\State Construct non-nested mesh hierarchy $\mathcal{M}_{i},\ i=0, \dots, L-1$ via Algorithm~\ref{alg: coarsening}, and provide the corresponding Jacobian matrix $\mathcal{J}_{\mathcal{M}_{i}}$.
\For{\(i = 0\) \textbf{to} \( L-1 \)}
    \State \( {\bm z}_{\mathcal{M}_{i}}^{(0)} = 0 \)
    \For{\(n = 0\) \textbf{to} \textit{its}}
        \State \( \bm{z}_{\mathcal{M}_{i}}^{(n+1)} = \bm{z}_{\mathcal{M}_{i}}^{(n)} + B_{\mathcal{M}_{i}}^{-1} (\bm{r}_{\mathcal{M}_{i}} - \mathcal{J}_{\mathcal{M}_{i}} \bm{z}_{\mathcal{M}_{i}}^{(n)}) \)
    \EndFor
    \State \( \bm{z}_{\mathcal{M}_{i}} = \bm{z}_{\mathcal{M}_{i}}^{(\textit{its})} \)
    \If{\(i < L-1 \)}
        \State \( \bm{r}_{\mathcal{M}_{i}+1} = \mathcal{I}_{\mathcal{M}_{i}}^{\mathcal{M}_{i+1}}(\bm {r}_{\mathcal{M}_{i}} - \mathcal{J}_{\mathcal{M}_{i}} \bm{z}_{\mathcal{M}_{i}}) \)
    \EndIf
\EndFor
\For{\(i = L-2 \) \textbf{to} \( 0 \)}
    \State \( \bm{z}_{\mathcal{M}_{i}}^{(0)} = \bm{z}_{\mathcal{M}_{i}} + \mathcal{I}_{\mathcal{M}_{i+1}}^{\mathcal{M}_{i}} \bm{z}_{\mathcal{M}_{i+1}} \)
    \For{\(n = 0\) \textbf{to} \textit{its}}
        \State \( \bm{z}_{\mathcal{M}_{i}}^{(n+1)} = \bm {z}_{\mathcal{M}_{i}}^{(n)} + B_{\mathcal{M}_{i}}^{-1} (\bm {r}_{\mathcal{M}_{i}} - \mathcal{J}_{\mathcal{M}_{i}} \bm{z}_{\mathcal{M}_{i}}^{(n)}) \)
    \EndFor
    \State \( \bm {z}_{\mathcal{M}_{i}} = \bm {z}_{\mathcal{M}_{i}}^{(\textit{its})} \)
\EndFor
\State \textbf{Output:} A vector \( {\bm z}_{\mathcal{M}_{0}} \) to the outer solver.
\end{algorithmic}
\end{algorithm}

\ 

\section{Experiment results}
\label{sec: experiments}
To evaluate the performance of the newly proposed parallel preconditioner, we conduct numerical experiments on a series of 2D and 3D linear parametric PDEs. In this work, we fix both the number of pre-smoothing and post-smoothing steps $its$ to $3$ in Algorithm~\ref{alg: workflow}. All simulations are performed on a supercomputer with multiple nodes, each equipped with two AMD EPYC 7452 32-Core CPUs and 256 GB of local memory. The nodes are interconnected via an InfiniBand high-performance network. In all experiments, we employ the generalized minimal residual algorithm (GMRES) \citep{saad1986gmres} for solving the linear systems iteratively. In this section, we primarily focus on: (1) the number of GMRES iterations to convergence, (2) the computing time, and (3) the parallel efficiency, using the one-level case (i.e., RAS with overlap $\delta=1$ in PETSc \citep{petsc-user-ref}, which is one of the state-of-the-art parallel preconditioners) as a baseline for comparison.


\ 

\subsection{Comparison under multiple parameters}
We first investigate the use of the same coarse mesh to accelerate the solution of problems under multiple parameters, to evaluate the overall the overall benefits of the two-level preconditioner compared to the one-level preconditioner. The computing time for a two-level experiment includes both the time required for the coarsening process and the time to solve all parametric problems. In this subsection, the restart value of GMRES is fixed at 50, and the absolute and relative tolerances for the GMRES iterative solver are set to $10^{-12}$. Here, the RAS preconditioner with $\delta = 1$ is used as the smoother on each mesh level, and LU factorization in SuperLU \citep{li2005overview} is used as the subdomain solver in the RAS method. Experiments are conducted using $np = 3, 6, 9$, and $12$ processors.

\subsubsection{2D Case}
We start by solving the convection-diffusion equations with a zero Dirichlet boundary condition in a 2D Lake-Superior-like domain, as described in refs. \citep{shewchuk2002delaunay, persson2005mesh},
\begin{equation}
    \left \{
    \begin{aligned}
    \nabla \cdot (\beta {\bm v}_{0} u + \nabla u) &= f, \ \texttt{in}\ \Omega, \\
    u &= 0, \ \texttt{on}\ \partial\Omega, 
    \end{aligned}
    \right.
\end{equation}
where we set $f = 1$ and velocity field ${\bm v}_{0} = (1, 0)$ as ref. \citep{pan2022agglomeration}. The computational domain $\Omega$ is shown in Figure~\ref{fig: superior}(a). Figures~\ref{fig: superior}(b) and~\ref{fig: superior}(c) show the original fine mesh $\mathcal{M}_{0}$ and the geometry-preserving coarse mesh $\mathcal{M}_{1}$ obtained after one coarsening process using Algorithm~\ref{alg: coarsening}, respectively. Graphically, there is no observable difference between the boundaries in $\mathcal{M}_{0}$ and $\mathcal{M}_{1}$. In this case, the first-order finite volume method is utilized to discretize this problem, and DoFs are distributed as shown in Figure~\ref{multiphysics}(a). We then investigate the performance results of the newly proposed two-level preconditioner by simultaneously simulating the cases with parameters $\beta = 1, 10, 100$.

\begin{figure}[H]
    \centering
    \subfigure[domain $\Omega$]{
		\begin{minipage}[]{0.31\linewidth}
		  \centering
            \begin{tikzpicture}
                \node[anchor=south west,inner sep=0] (image) at (0,0.5) {\includegraphics[width=1.0\textwidth]{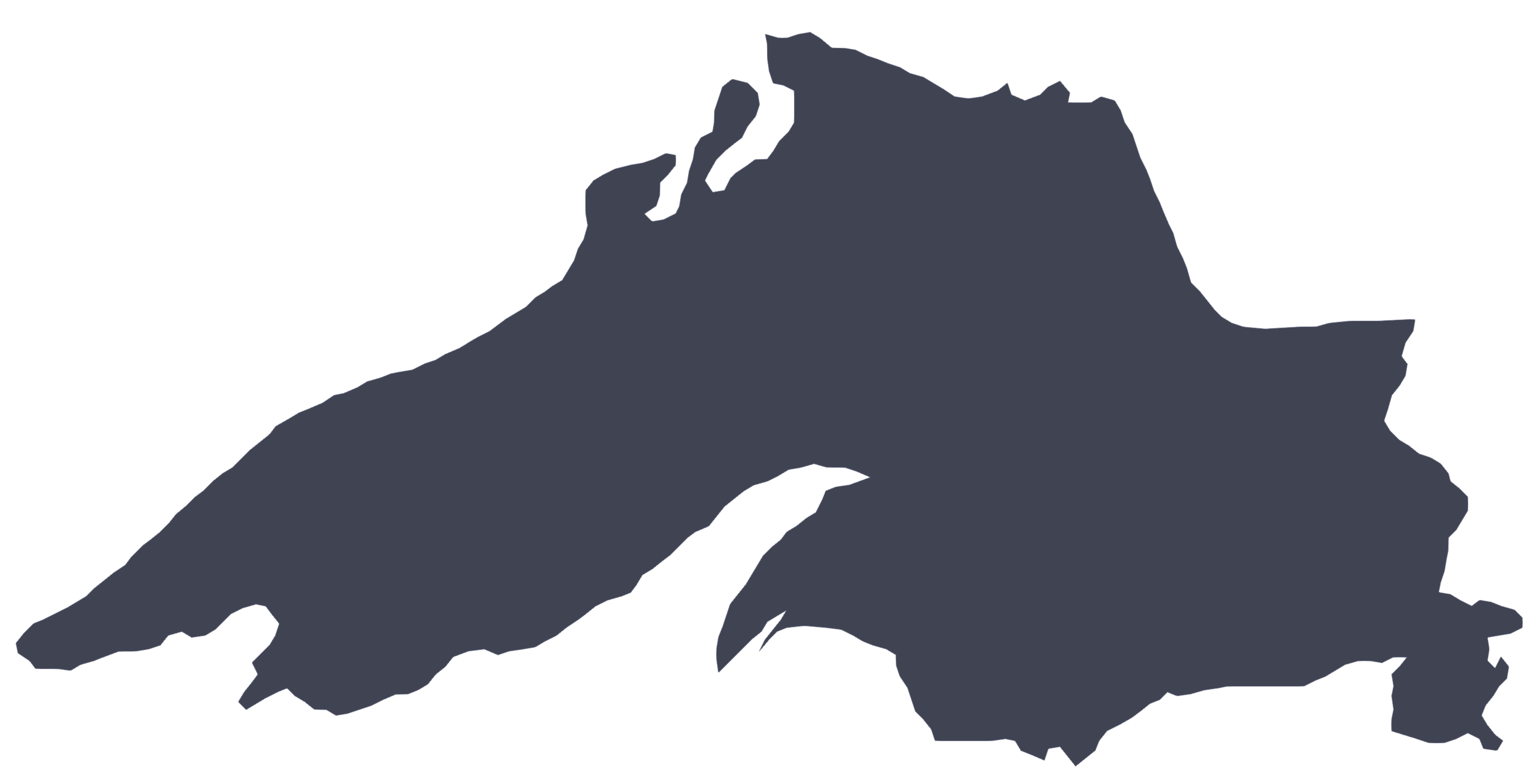}};  
                \draw[dashed, blue] (1.8, 2.7+0.5) -- (4.0, 2.7+0.5);
                \draw[dashed, blue] (1.8, 1.5+0.5) -- (4.0, 1.5+0.5);
                \draw[dashed, blue] (1.8, 2.7+0.5) -- (1.8, 1.5+0.5);
                \draw[dashed, blue] (4.0, 2.7+0.5) -- (4.0, 1.5+0.5);
                \draw[->, >=latex, blue, line width=0.5pt] (4.2, 2.1+0.5) -- (5.0, 1.8+0.5);

            \end{tikzpicture}
		\end{minipage}
    } \subfigure[$\mathcal{M}_{0}$ zoom-in]{
		\begin{minipage}[]{0.31\linewidth}
		  \centering
            \begin{tikzpicture}
                \node[anchor=south west,inner sep=0] (image) at (0,0.5) {\includegraphics[width=1.0\textwidth]{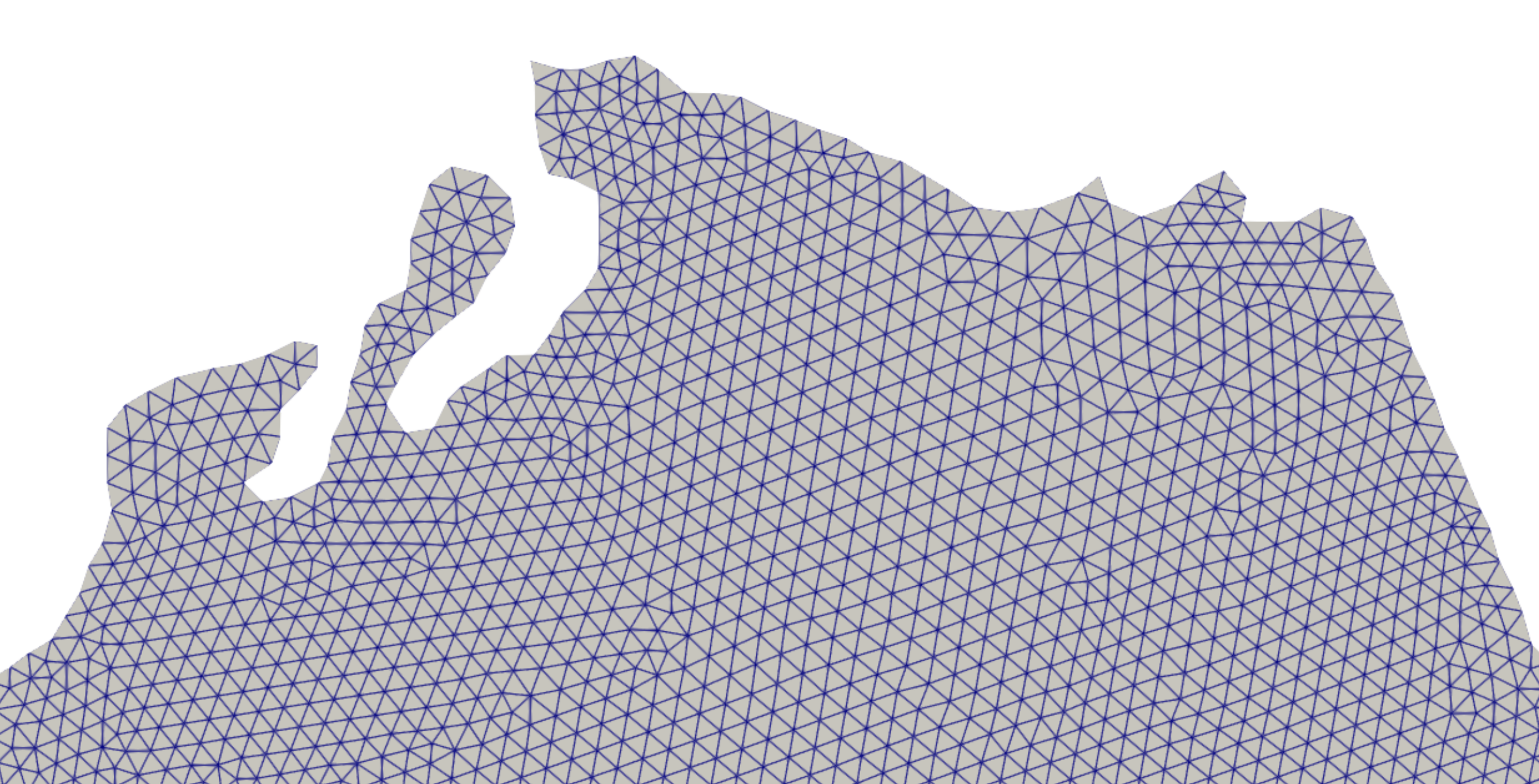}};  
            \end{tikzpicture}
		\end{minipage}
    } \subfigure[$\mathcal{M}_{1}$ zoom-in]{
		\begin{minipage}[]{0.31\linewidth}
		  \centering
            \begin{tikzpicture}
                \node[anchor=south west,inner sep=0] (image) at (0,0.5) {\includegraphics[width=1.0\textwidth]{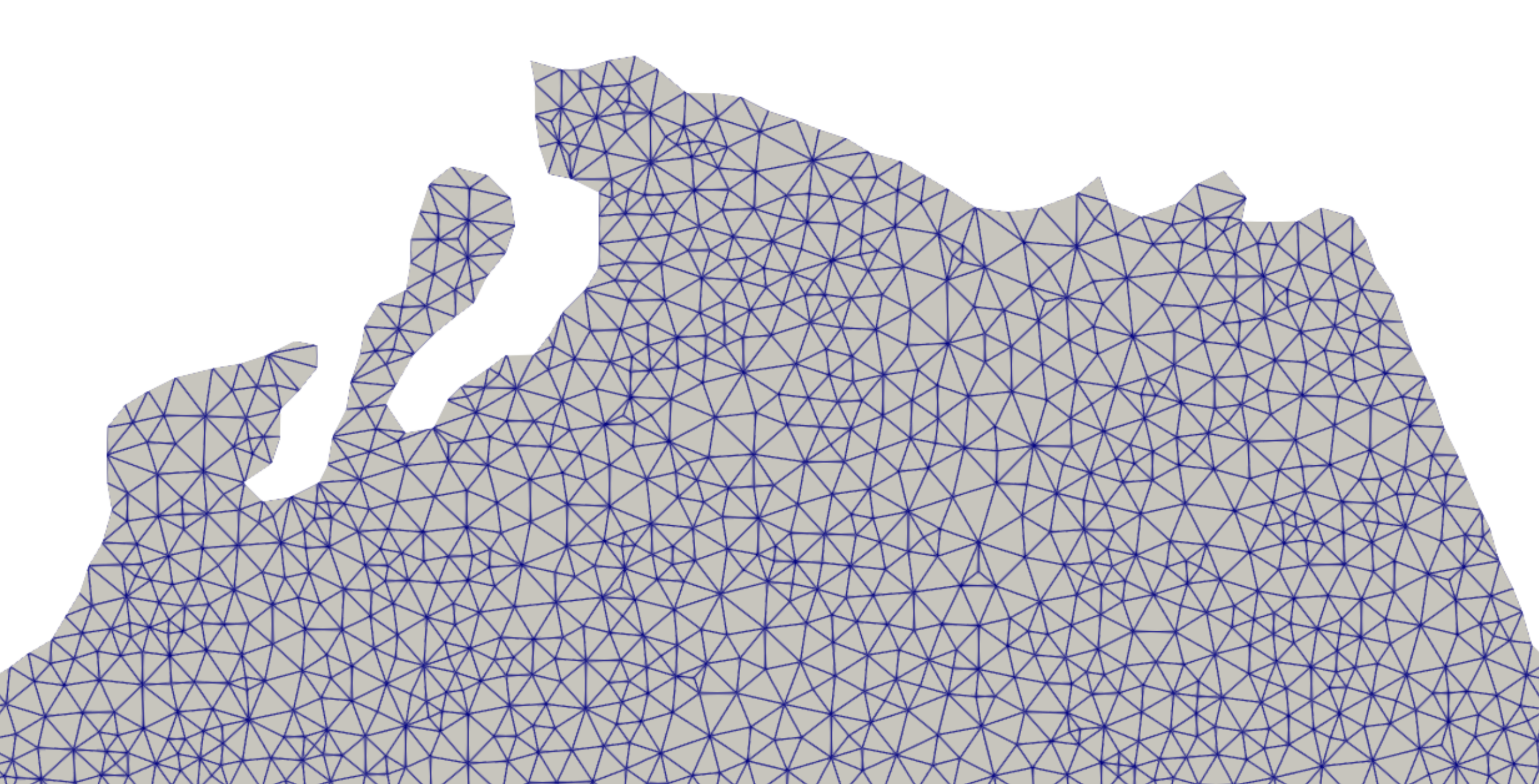}};  
            \end{tikzpicture}
		\end{minipage}
    }
    \caption{(a) The computational domain $\Omega$; (b) $\mathcal{M}_{0}$ with 10,336 elements and 5,393 vertices; (c) $\mathcal{M}_{1}$ with 7,766 elements and 4,108 vertices;}
    \label{fig: superior}
\end{figure}

Table~\ref{tab: convection-diffusion-multiple} presents the total GMRES iterations, computing time, and parallel efficiency for both one-level and two-level cases. The two-level preconditioner achieves a parallel efficiency of $75.53\%$, while the one-level baseline demonstrates $48.13\%$ parallel efficiency with $12$ processors, illustrating the superior parallel efficiency of the new two-level preconditioner. Furthermore, the two-level preconditioner exhibits a significant advantage in terms of GMRES iterations and total computing time at 12 processors.

\begin{table}[H]
    \centering
    \caption{For $\beta = 1, 10, 100$, comparison of total GMRES iterations (denoted as ``${\hbox{N.It}}_{total}$''), computing time (``Time(s)''), and parallel efficiency (``Eff $\%$'') for the proposed multilevel Schwarz preconditioners, specifically for one-level and two-level cases. Here, ``Eff $\%$ = $(m T_m)/(n T_n)$" denotes the parallel efficiency when scaling from $m$ processors to $n$ processors ($m \leq n$). }
    \setlength{\tabcolsep}{8.0mm}{
    \center
    \begin{tabular} {|c|l|c|c|c|c|}
        \hline
            &$np$ &3 &6 &9 &12 \\\cline{1-6}
        \multirow{3}{*}{One-level} &${\hbox{N.It}}_{total}$ &38 &47 &52 &56 \\\cline{2-6}
        &Time &1.211  &0.796 &0.687 &0.629 \\\cline{2-6}
        &Eff($\%$) &100\% &76.07\% &58.76\% &48.13\%
        \\\cline{1-6}
        \multirow{3}{*}{Two-level} &${\hbox{N.It}}_{total}$ &21 &28 &29 &33 \\\cline{2-6}
		&Time &1.571  &0.952 &0.673 &0.520 \\\cline{2-6}
        &Eff($\%$) &100\% &82.51\% &77.81\% &75.53\%
        \\\hline
    \end{tabular}
    }
    \label{tab: convection-diffusion-multiple}
\end{table}

\ 

\subsubsection{3D Case}
We then consider the linear elasticity equations \citep{kong2016highly, howell2009applied, atallah2021shifted} to calculate the displacement $\bm u = (u, v, w)$ of a 3D drive wheel with a geometry as described in ref. \citep{wheelmodel}. In this problem, we define the fixed bottom side surface of the model as $\Gamma_W$, while the remaining surface is defined as $\Gamma_S = \partial \Omega / \Gamma_W$. The governing equations are:
\begin{equation}
    \left \{
    \begin{aligned}
    -\nabla \cdot  \bm \sigma &=  \bm 0,\ \ \ \ \ \texttt{in}\ \Omega,\\
    \bm \sigma \cdot \bm n &= -p \bm n,\ \texttt{on}\ \Gamma_S,\\
    \bm u &= \bm 0,\ \ \ \ \ \texttt{on}\ \Gamma_W.
    \end{aligned}
    \right.
\end{equation}
Here $p=$ 1 MPa is the constant surface pressure on $\Gamma_S$. The Cauchy stress tensor is formulated as follows: 
\begin{equation}
    \bm \epsilon = 1/2(\nabla \bm u + (\nabla \bm u)^{\top}),\ \ \ \bm \sigma = \lambda \texttt{trace}(\bm \epsilon) \bm I + 2 \mu \bm \epsilon, 
\end{equation}
where $\mu$ and $\lambda$ are coefficients defined by Young’s modulus $E$ and Poisson’s ratio $\nu$,
\begin{equation}
    \mu = \frac{E}{2(1+\nu)},\ \ \ \ \ \lambda = \frac{E\nu}{(1+\nu)(1-2\nu)}.
\end{equation}
In this experiment, Young’s modulus $E$ is $2.1\times10^3$ MPa and Poisson’s ratio $\nu = 0.43, 0.45$, and $0.47$. The finite element $[P_2(\mathcal{K})]^{3}$ is utilized to discretize this problem, and DoFs are distributed as shown in Figure~\ref{multiphysics}(b). Then, Figure~\ref{fig: drive-wheel} presents the computational results for the displacement $\bm u$, verifying that the numerical algorithm yields reasonable results. In the context of a two-level experiment, Figures~\ref{fig: wheel-coarsening}(a-b) and~\ref{fig: wheel-coarsening}(c-d) display the original fine mesh $\mathcal{M}_{0}$ and the geometry-preserving coarse mesh $\mathcal{M}_{1}$ obtained after one coarsening process using Algorithm~\ref{alg: coarsening}, respectively. Here, cross-sectional views of $\mathcal{M}_{0}$ and $\mathcal{M}_{1}$ clearly demonstrate that while the mesh becomes coarser, the domain boundaries are preserved.

\ 

\begin{figure}[H]
    \centering
    \subfigure[displacement $u$]{
		\begin{minipage}[]{0.23\linewidth}
		  \centering
		  \includegraphics[width=1.0\linewidth]{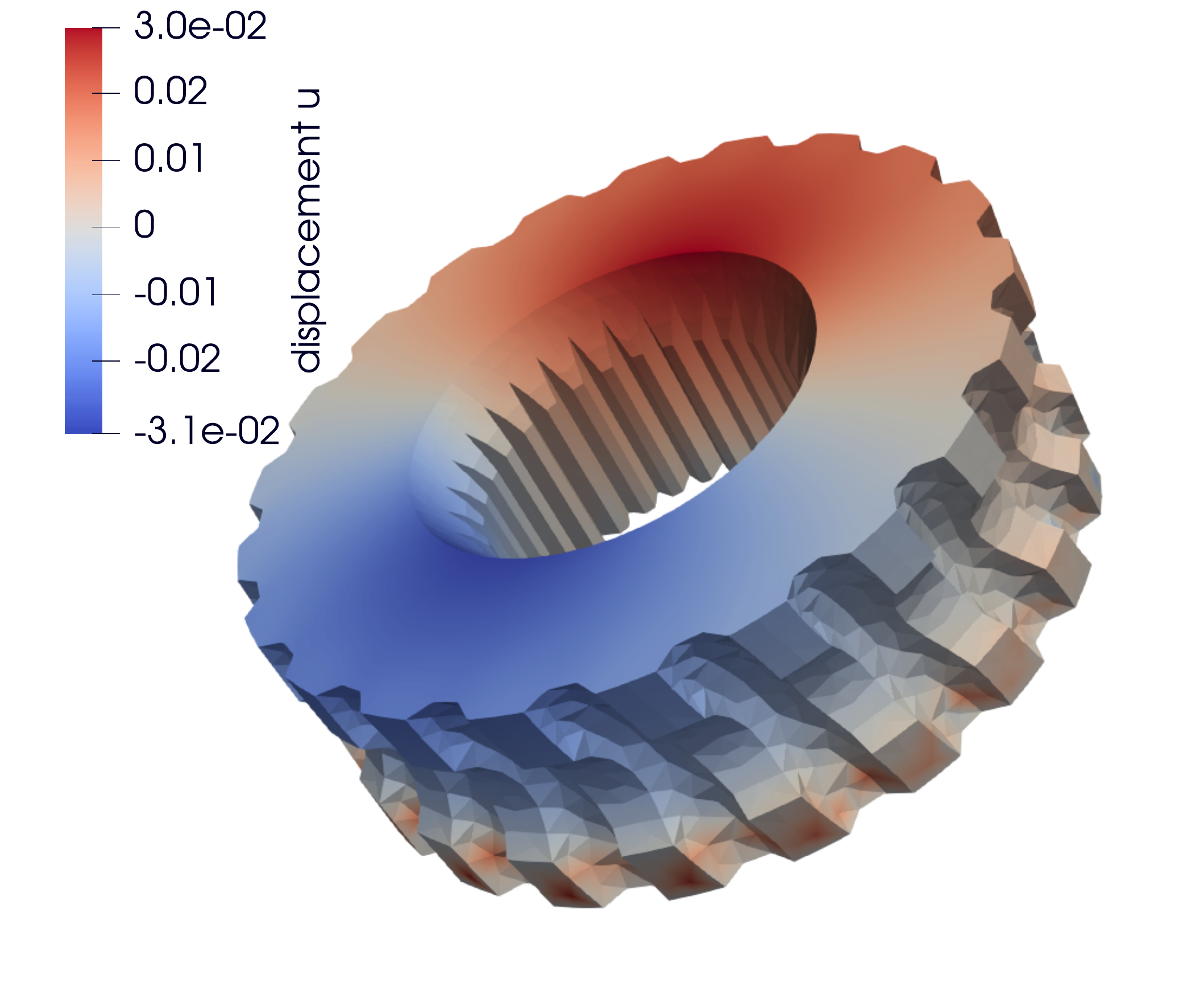}
		\end{minipage}
    } \subfigure[displacement $v$]{
		\begin{minipage}[]{0.23\linewidth}
		  \centering
		  \includegraphics[width=1.0\linewidth]{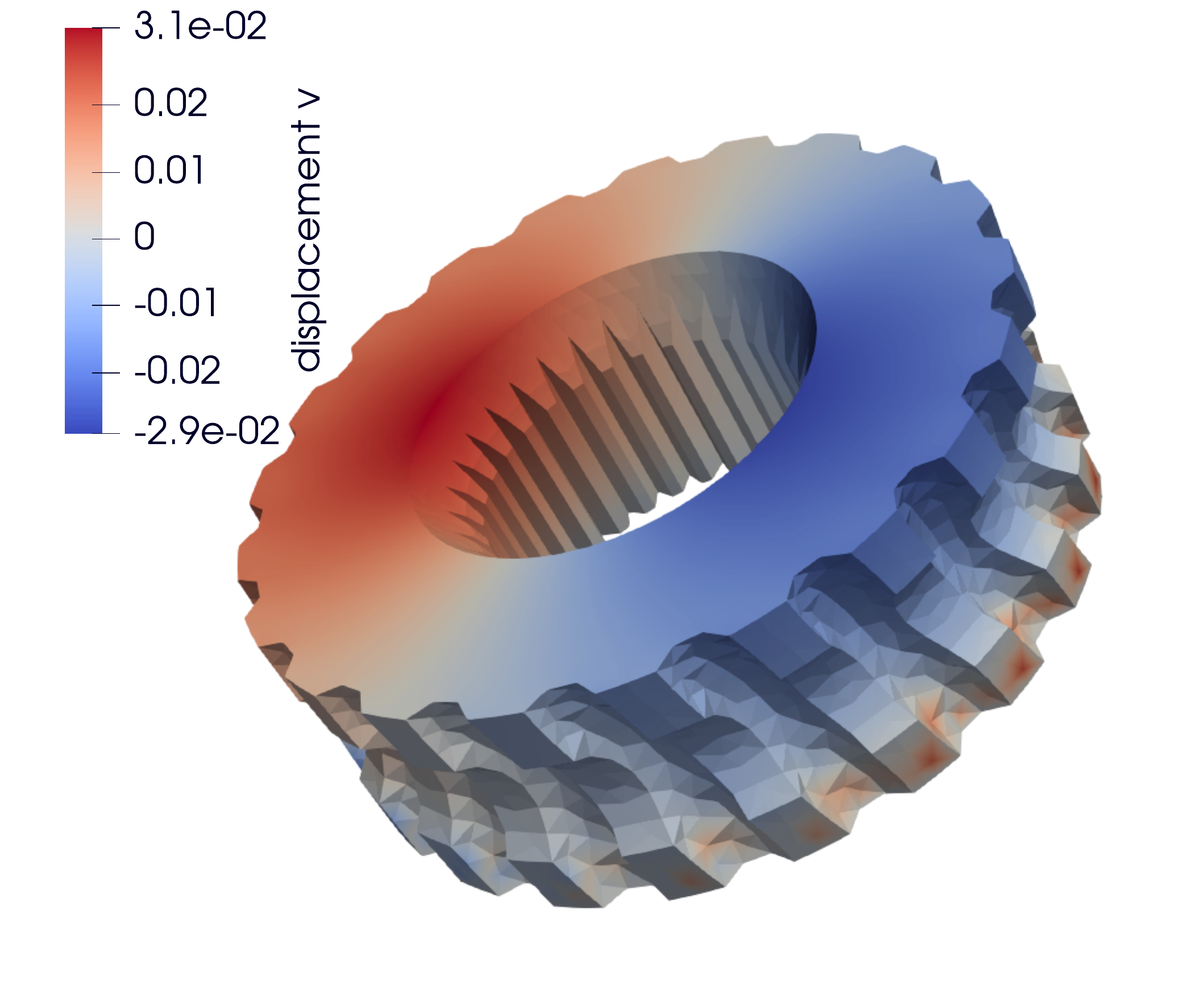}
		\end{minipage}
    } \subfigure[displacement $w$]{
		\begin{minipage}[]{0.23\linewidth}
		  \centering
		  \includegraphics[width=1.0\linewidth]{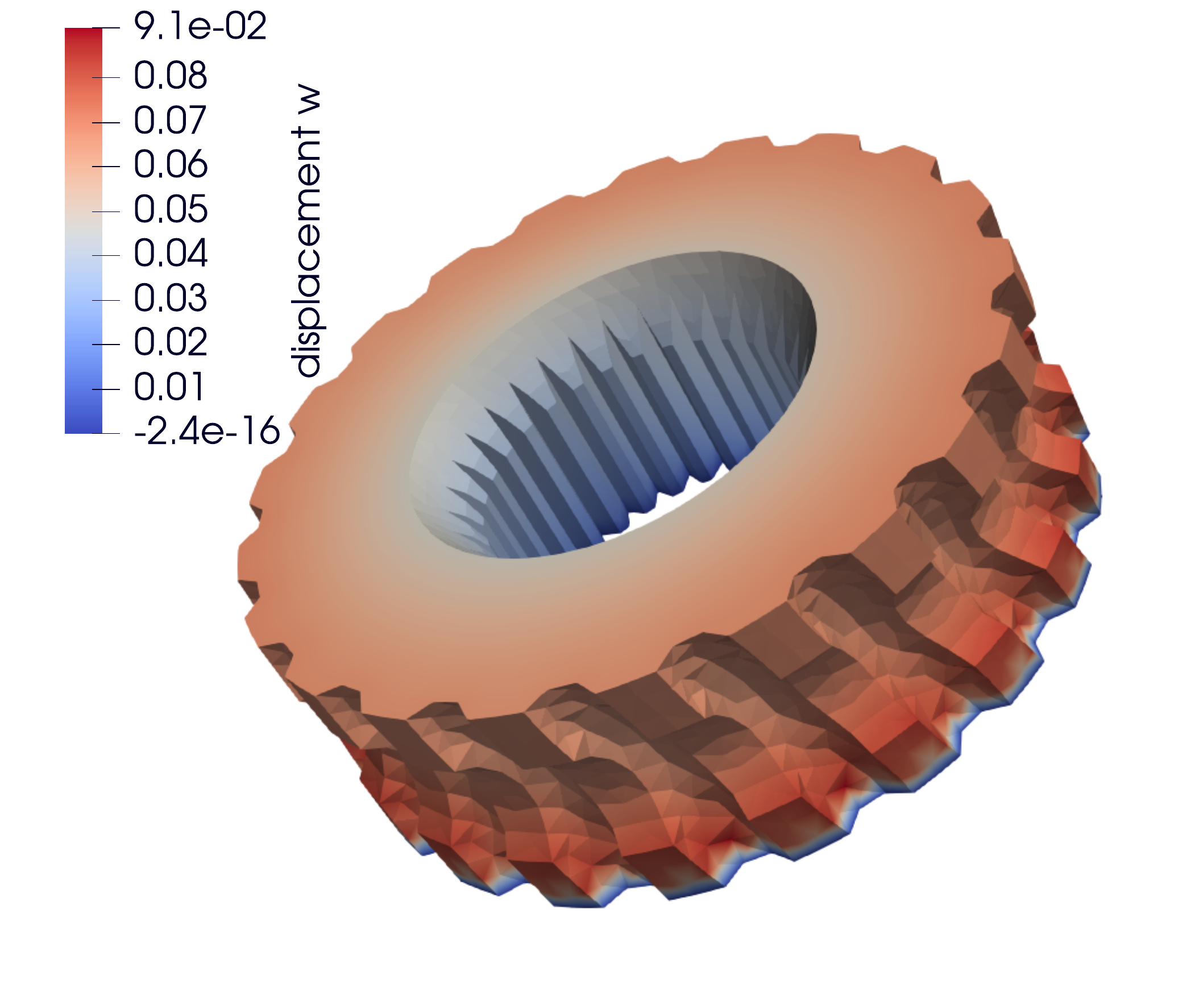}
		\end{minipage}
    } \subfigure[displacement magnitude]{
		\begin{minipage}[]{0.23\linewidth}
		  \centering
		  \includegraphics[width=1.0\linewidth]{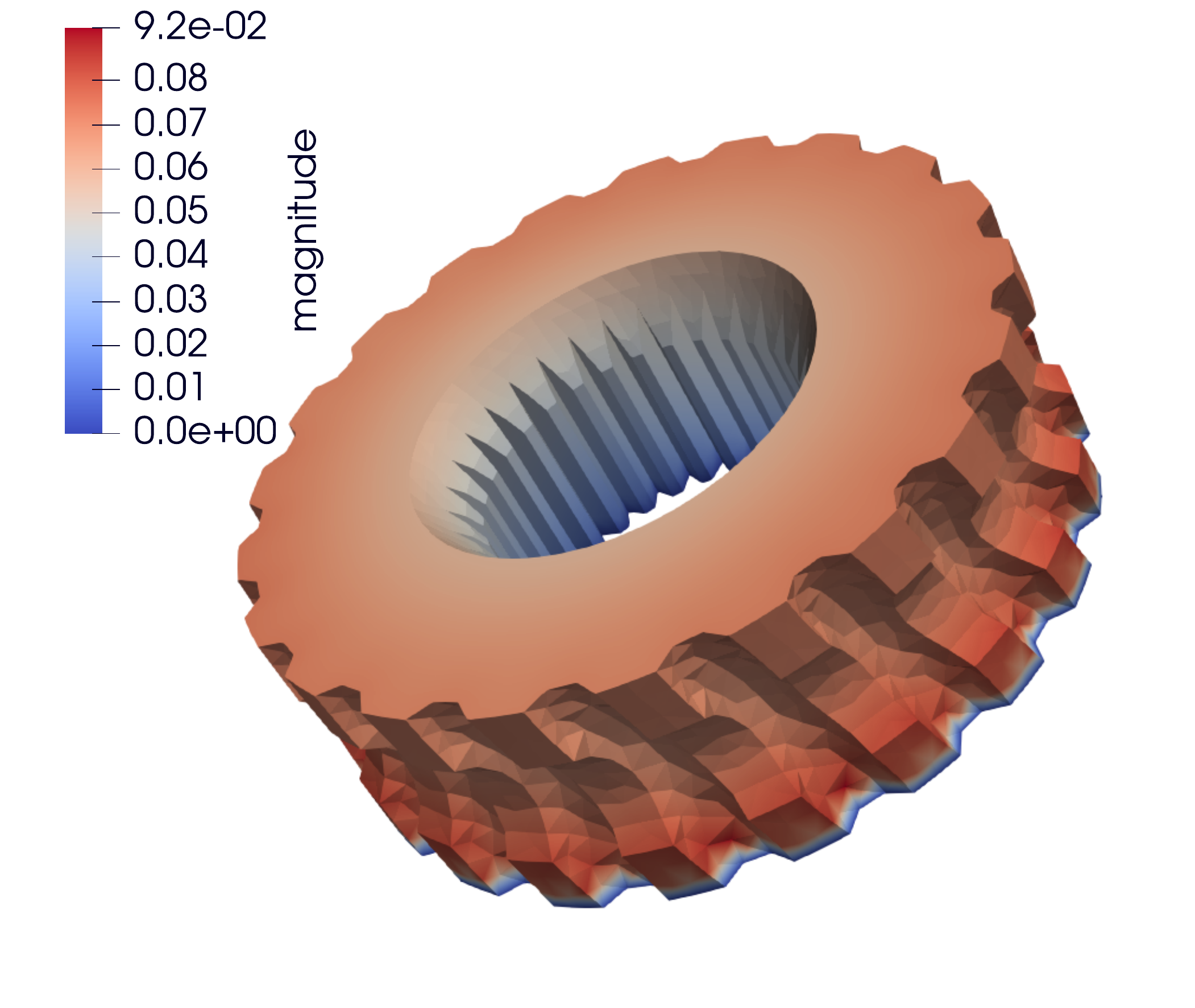}
		\end{minipage}
    } 
    \caption{The deformation of 3D drive wheel for $\nu = 0.45$ case; (a) The displacement $u$ in the x-direction; (b) The displacement $v$ in the y-direction; (c) The displacement $w$ in the z-direction; (d) The magnitude $|\bm u|$.}
    \label{fig: drive-wheel}
\end{figure}
\begin{figure}[H]
    \centering
    \subfigure[$\mathcal{M}_{0} (x < 0)$]{
		\begin{minipage}[]{0.23\linewidth}
		  \centering
		  \includegraphics[width=1.0\linewidth]{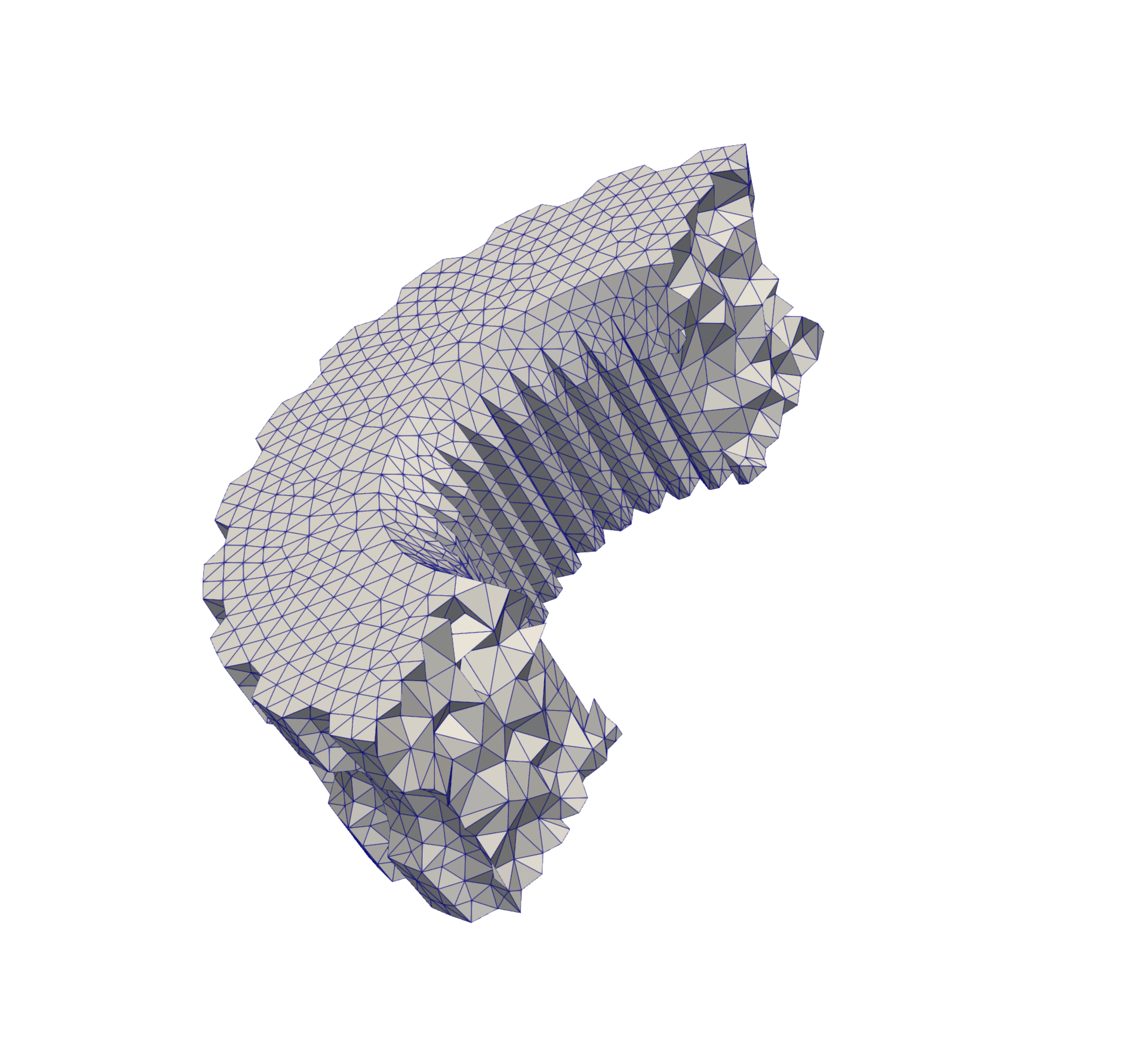}
		\end{minipage}
    } \subfigure[$\mathcal{M}_{0} (y < 0)$]{
		\begin{minipage}[]{0.23\linewidth}
		  \centering
		  \includegraphics[width=1.0\linewidth]{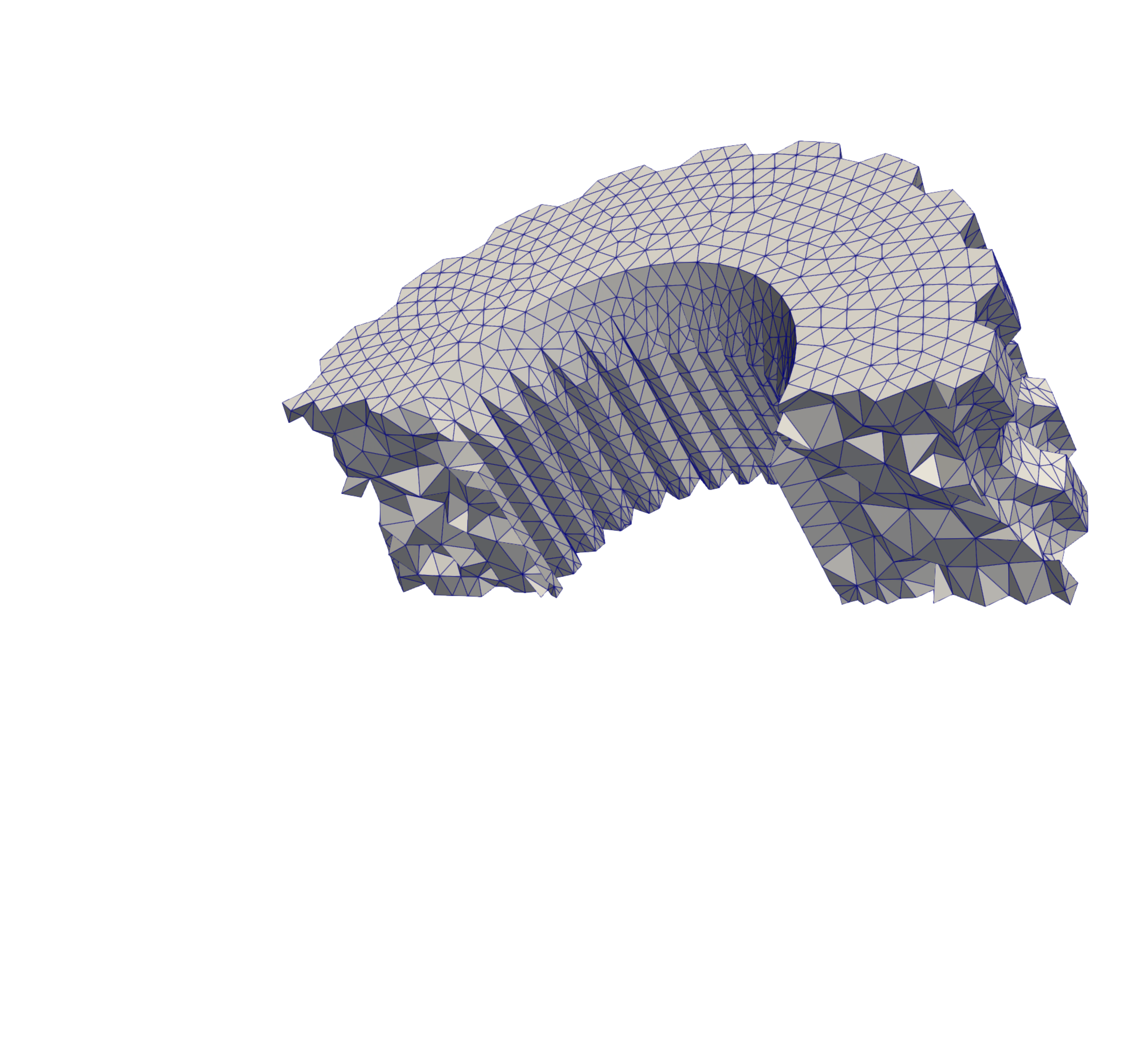}
		\end{minipage}
    } \subfigure[$\mathcal{M}_{1} (x < 0)$]{
		\begin{minipage}[]{0.23\linewidth}
		  \centering
		  \includegraphics[width=1.0\linewidth]{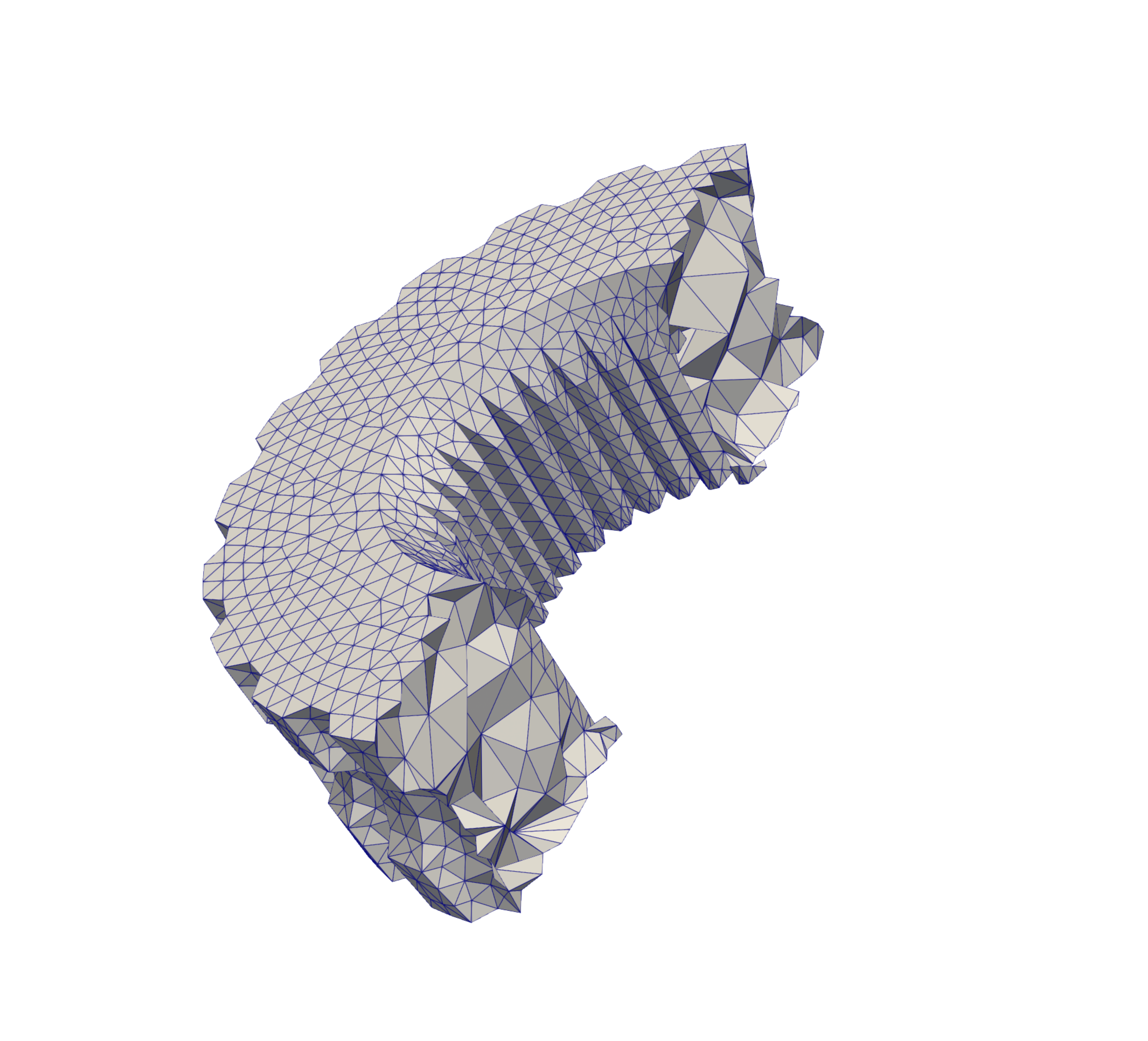}
		\end{minipage}
    } \subfigure[$\mathcal{M}_{1} (y < 0)$]{
		\begin{minipage}[]{0.23\linewidth}
		  \centering
		  \includegraphics[width=1.0\linewidth]{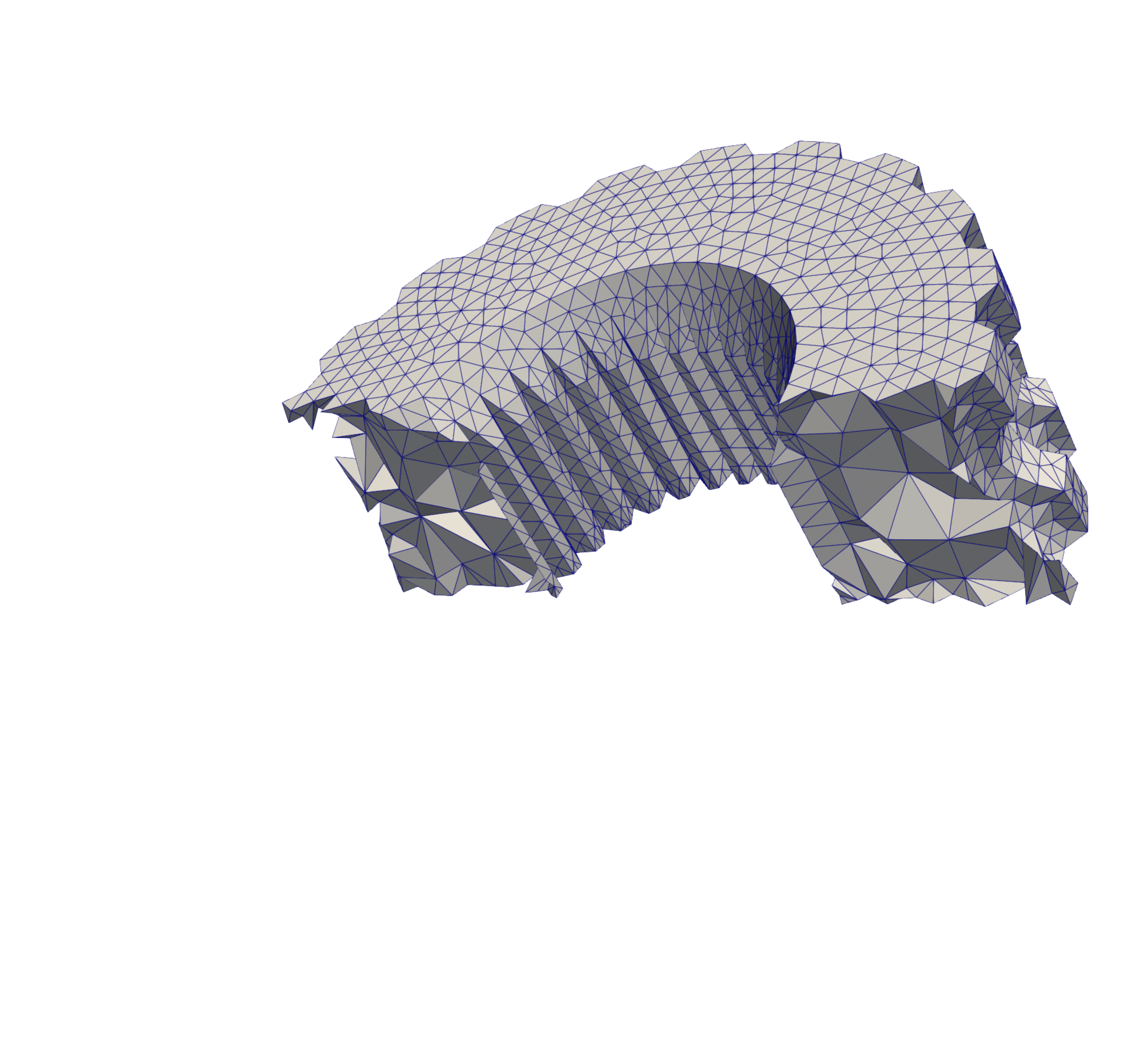}
		\end{minipage}
    } 
    \caption{Cross-sectional views of the meshes $\mathcal{M}_{0}$ and $\mathcal{M}_{1}$ in the x- and y-directions. (a-b) $\mathcal{M}_{0}$: 27,286 elements and 6,912 vertices; (c-d) $\mathcal{M}_{1}$: 19,999 elements and 5,675 vertices.}
    \label{fig: wheel-coarsening}
\end{figure}

For this 3D case, Table~\ref{tab: elasticity-multiple} presents the total iterations, computing time, and parallel efficiency for both one-level and two-level cases. Similar to the 2D case, the two-level preconditioner demonstrates a significant advantage in terms of GMRES iterations and total computing time when using 12 processors. Notably, the two-level preconditioner achieves superlinear parallel efficiency of $146.0\%$ with 12 processors, primarily due to the significant decrease in the number of GMRES iterations as the number of processors increases from 3 to 12.

\begin{table}[H]
    \centering
    \caption{For $\nu = 0.43, 0.45, 0.47$, comparison of total GMRES iterations (denoted as ${\hbox{N.It}}_{total}$), computing time (``Time (s)''), and parallel efficiency (Eff $\%$) for the proposed multilevel Schwarz preconditioners, specifically for one-level and two-level cases. Here, ``Eff $\%$ = $(m T_m)/(n T_n)$" denotes the parallel efficiency when scaling from $m$ processors to $n$ processors ($m \leq n$).}
    \setlength{\tabcolsep}{7.9mm}{
    \center
    \begin{tabular} {|c|l|c|c|c|c|}
        \hline
        &$np$ &3 &6 &9 &12 \\\cline{1-6}
        \multirow{3}{*}{One-level} &${\hbox{N.It}}_{total}$ &683 &596 &671 &453 \\\cline{2-6}
        &Time &555.35 &289.82 &249.56 &176.69 \\\cline{2-6}
        &Eff($\%$) &100\% &95.81\% &74.18\% &78.58\%
        \\\cline{1-6}
        \multirow{3}{*}{Two-level} &${\hbox{N.It}}_{total}$ &473 &279 &249 &147 \\\cline{2-6}
		&Time &873.81 &347.91 &233.07 &149.60 \\\cline{2-6}
        &Eff($\%$) &100\% &125.6\% &125.0\% &146.0\%
        \\\hline
    \end{tabular}
    }
    \label{tab: elasticity-multiple}
\end{table}

\ 

\subsection{Comparison of preconditioners with different levels}
To investigate the performance of the proposed preconditioner with different levels, we then consider the Stokes flow in a 2D/3D pipe governed by Stokes equations \citep{logg2012automated, atallah2022high},  
\begin{equation}
    \left \{
    \begin{aligned}
    -\nabla p + \nu \Delta  \bm u &= \bm 0,\  \texttt{in}\  \Omega, \\
    \nabla \cdot  \bm u &= 0,\  \texttt{in}\  \Omega.
    \end{aligned}
    \right.
    \label{equ: pipe-flow}
\end{equation}
In Equation~\eqref{equ: pipe-flow}, $ \bm u $ ($\bm u = (u, v)$ for 2D case and $\bm u = (u, v, w)$ for 3D case) denotes the velocity, and $p$ represents the pressure. We define the Cauchy stress tensor $ \bm \sigma = -p \bm I + \nu (\nabla  \bm u + (\nabla  \bm u)^{\top})$ with the viscosity coefficient $\nu \in \{ 1/500, 1/1000, 1/2000\}$. The boundary conditions on the inlets $\Gamma_{I}$, outlets $\Gamma_{O}$, and the wall $\Gamma_{W}$ are specified separately for the 2D and 3D cases. 

In this experiment, we mainly focus on the cases where level $L = 1, 2, 3$. For each parameter $\nu$, the coarsening time and simulation time using one-level, two-level, and three-level preconditioners are evaluated separately. In this subsection, the restart value of GMRES is fixed at 50 for the 2D case and 200 for the 3D case. The absolute tolerance and relative tolerance for the GMRES iterative solver are set to $10^{-12}$. Here, the RAS preconditioner with $\delta = 1$ is used as the smoother on each mesh level. For higher efficiency, incomplete LU factorization (ILU(p)) in PETSc \citep{petsc-user-ref} is used as the approximate subdomain solver. All experiments are conducted using $np$ = 3, 6, 9, and 12 processors.

\subsubsection{2D Case}
We first consider the 2D problem, in which $\Omega$ is a 2D pipe as shown in Figure~\ref{fig: Stokes-Flow}(a). In this case, we focus on the following boundary conditions, 
\begin{equation}
    \left \{
    \begin{aligned}
    &\bm u = (4(1-y)y,\ 0),\  \texttt{on}\ \ \Gamma_{I},\\
    &\bm u =  \bm 0,\ \ \ \ \ \ \ \ \ \ \ \ \ \ \ \ \   \texttt{on}\ \ \Gamma_{W},\\
    &\bm \sigma \cdot \bm n = \bm 0,\ \ \ \ \ \ \ \ \ \ \ \ \ \texttt{on}\ \ \Gamma_{O}.
    \end{aligned}
    \right.
\end{equation}
The 2D Taylor-Hood element $P_{2}-P_{1}$ \citep{taylor1973numerical} is utilized to discretize this problem, and the DoFs are distributed as shown in Figure~\ref{multiphysics}(c). Figures~\ref{fig: Stokes-Flow}(b-c) display the computational results for the case $\nu = 1/1000$. For the multilevel experiments, we begin with the finest mesh $\mathcal{M}_{0}$ shown in Figure~\ref{fig: pipe-coarsening}(a) and apply Algorithm~\ref{alg: coarsening} to construct the coarse meshes $\mathcal{M}_{1}$ and $\mathcal{M}_{2}$ as depicted in Figures~\ref{fig: pipe-coarsening}(b-c). As shown, meshes $\mathcal{M}_{0}$, $\mathcal{M}_{1}$, and $\mathcal{M}_{2}$ share the same geometric boundary.
\begin{figure}[H]
    \centering
    \subfigure[domain $\Omega$]{
		\begin{minipage}[]{0.31\linewidth}
		  \centering
            \begin{tikzpicture}
                \node[anchor=south west,inner sep=0] (image) at (0,0) {\includegraphics[width=1.0\textwidth]{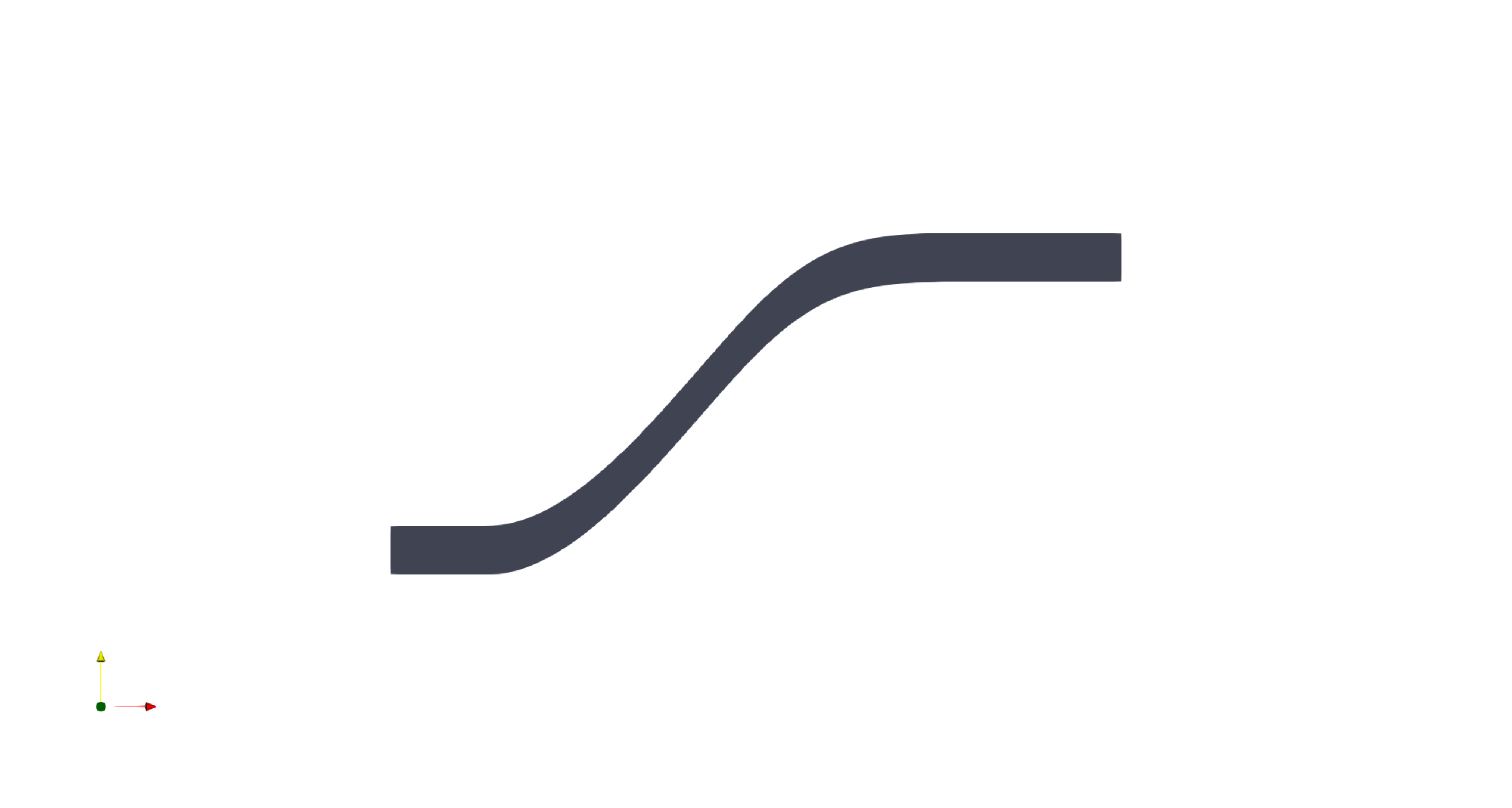}};
                \node at (0.4,0.8) {$\Gamma_{I}$};
                \node at (1.65,1.6) {$\Gamma_{W}$};
                \node at (2.8,1.4) {$\Gamma_{W}$};
                \node at (4.8,1.9) {$\Gamma_{O}$};

                \draw[dashed, blue] (0.1, 1.25) -- (2.15, 1.25);
                \draw[dashed, blue] (0.1, 0.2) -- (2.15, 0.2);
                \draw[dashed, blue] (0.1, 0.2) -- (0.1, 1.25);
                \draw[dashed, blue] (2.15, 0.2) -- (2.15, 1.25);
            \end{tikzpicture}
		\end{minipage}
    } \subfigure[pressure]{
		\begin{minipage}[]{0.31\linewidth}
		  \centering
		  \includegraphics[width=1.0\linewidth]{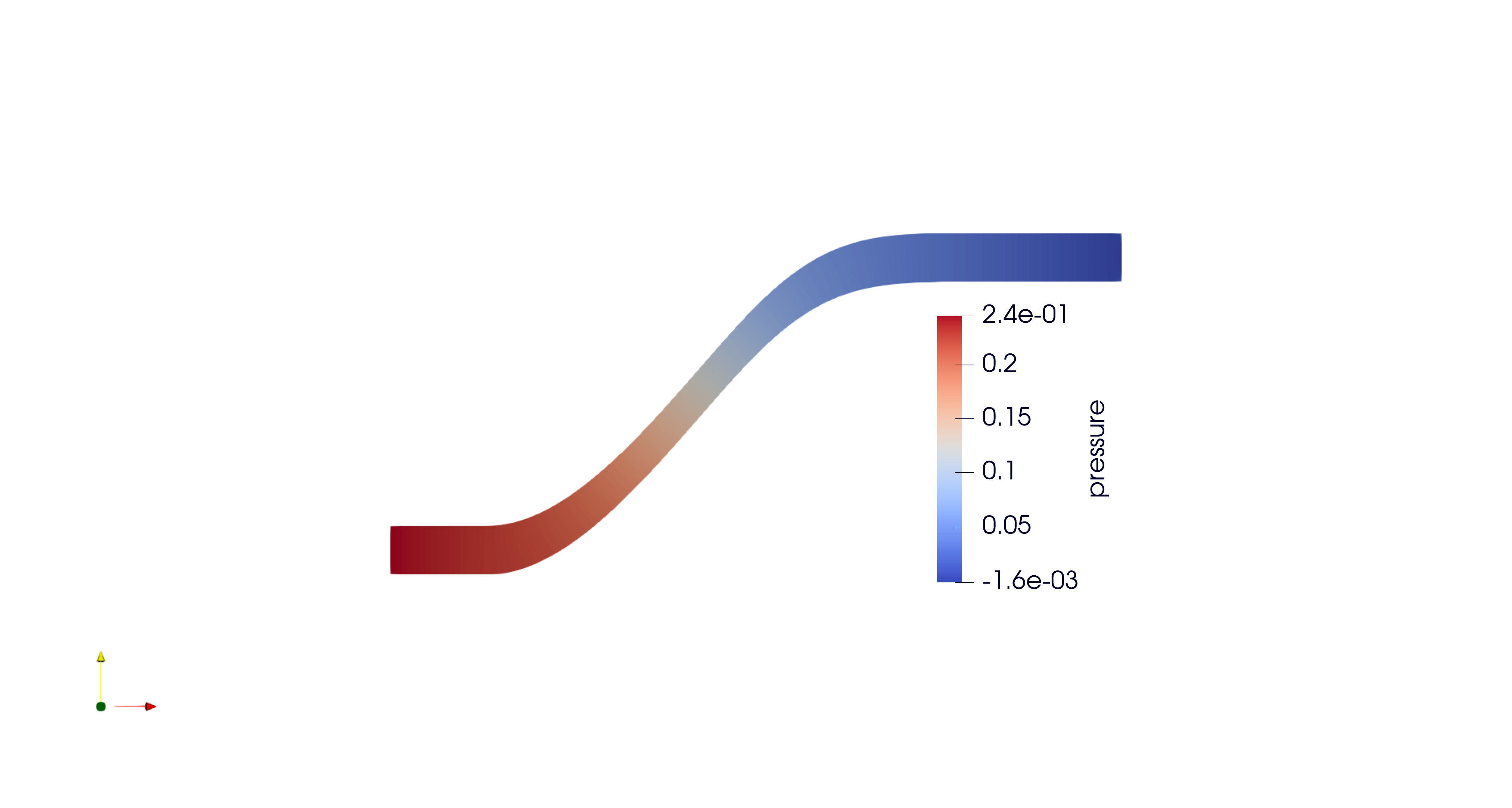}
		\end{minipage}
    } \subfigure[velocity magnitude $|{\bm u}|$]{
		\begin{minipage}[]{0.31\linewidth}
		  \centering
		  \includegraphics[width=1.0\linewidth]{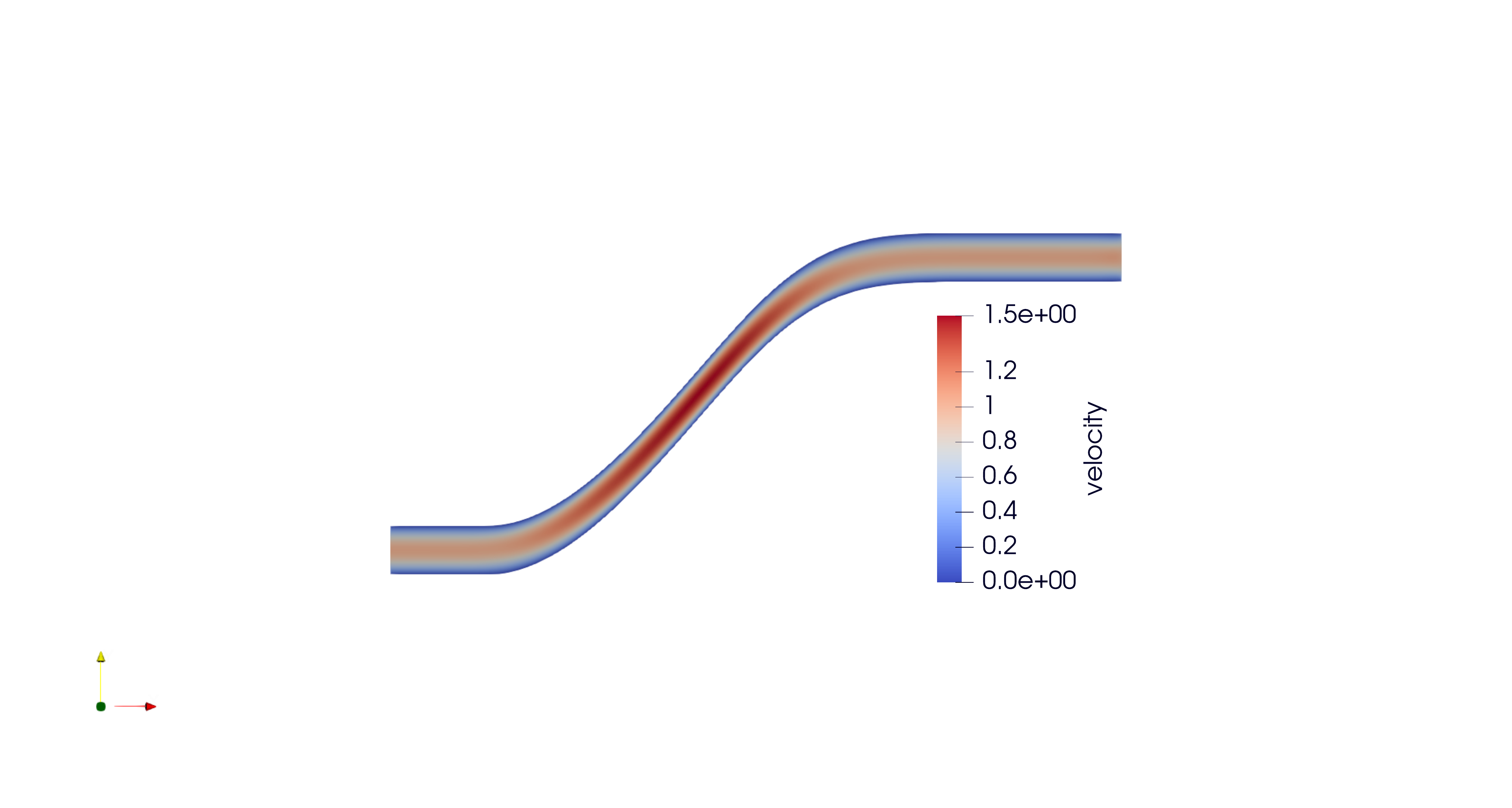}
		\end{minipage}
    }
    \caption{(a) The 2D pipe for the computation of Stokes flow; (b) The pressure in the computed results for $\nu = 1/1000$ case; (c) The magnitude of velocity for $\nu = 1/1000$ case. }
    \label{fig: Stokes-Flow}
\end{figure}

\begin{figure}[H]
    \centering
    \subfigure[$\mathcal{M}_{0}$ zoom-in]{
		\begin{minipage}[]{0.31\linewidth}
		  \centering
            \begin{tikzpicture}
                \node[anchor=south west,inner sep=0] (image) at (0,0) {\includegraphics[width=1.0\textwidth]{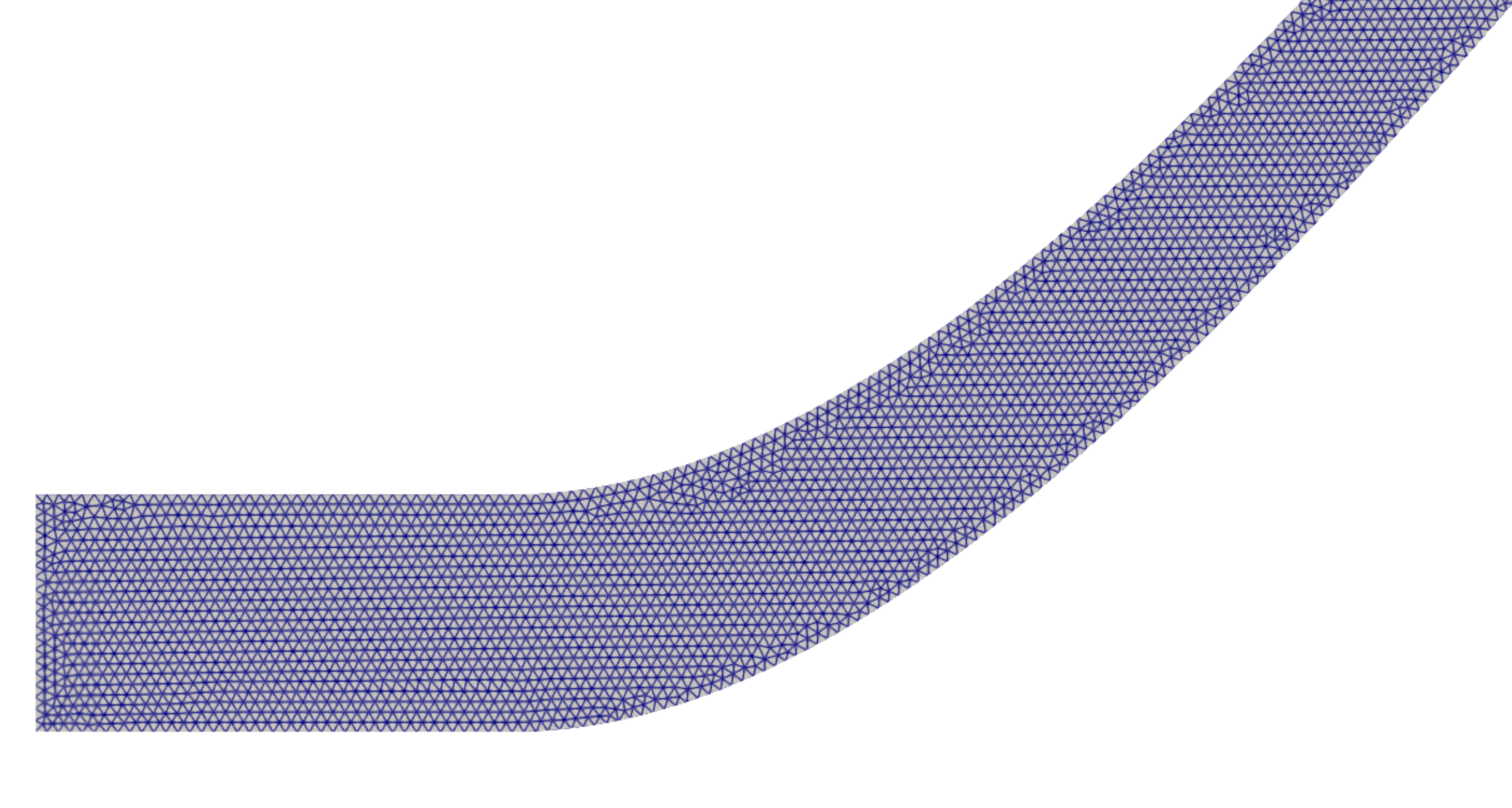}};
            \end{tikzpicture}
		\end{minipage}
    } \subfigure[$\mathcal{M}_{1}$ zoom-in]{
		\begin{minipage}[]{0.31\linewidth}
		  \centering
            \begin{tikzpicture}
                \node[anchor=south west,inner sep=0] (image) at (0,0) {\includegraphics[width=1.0\textwidth]{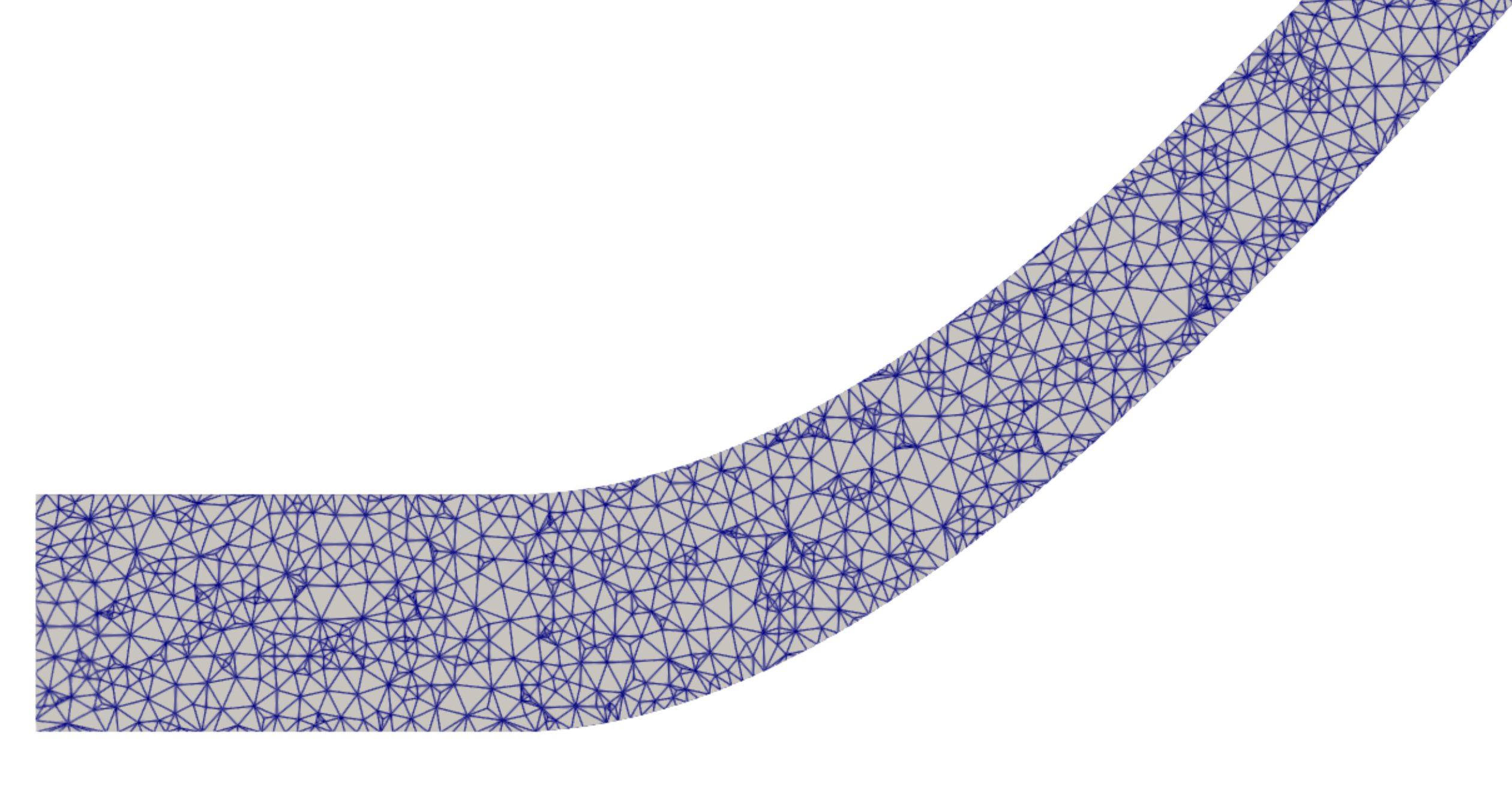}};
            \end{tikzpicture}
		\end{minipage}
    } \subfigure[$\mathcal{M}_{2}$ zoom-in]{
		\begin{minipage}[]{0.31\linewidth}
		  \centering
            \begin{tikzpicture}
                \node[anchor=south west,inner sep=0] (image) at (0,0) {\includegraphics[width=1.0\textwidth]{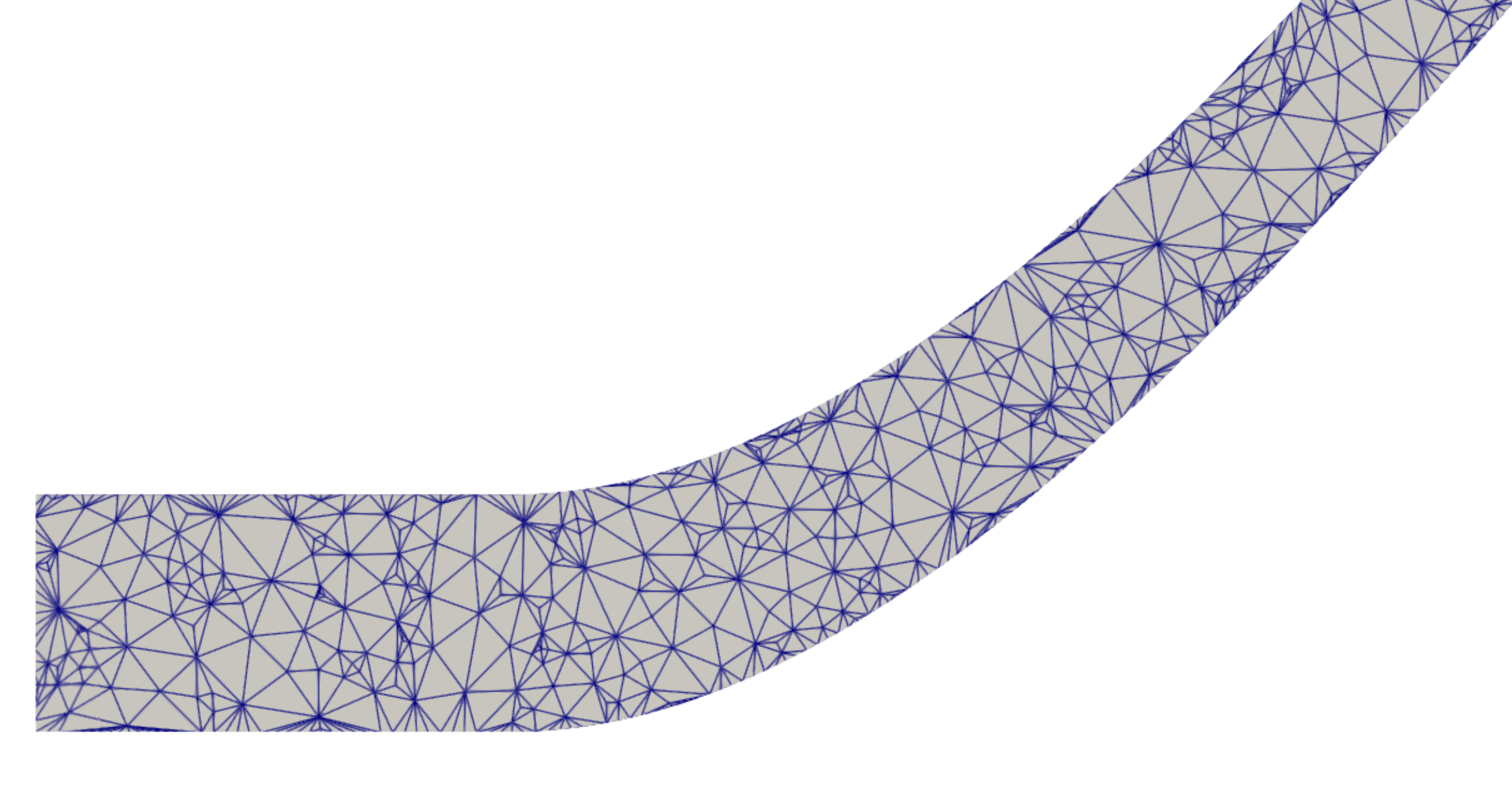}};
            \end{tikzpicture}
		\end{minipage}
    }
    \caption{The zoom-in views of the meshes $\mathcal{M}_{0}$, $\mathcal{M}_{1}$, and $\mathcal{M}_{2}$; (a) $\mathcal{M}_{0}$: 14,198 elements and 7,465 vertices; (b)  $\mathcal{M}_{1}$: 5,956 elements and 3,344 vertices; (c) $\mathcal{M}_{2}$: 2,544 elements and 1,638 vertices. }
    \label{fig: pipe-coarsening}
\end{figure}

We then compare the convergence history of the GMRES iterative solver with and without the proposed multilevel preconditioner. We test the cases of $\nu = 1/500$, $\nu = 1/1000$, and $\nu = 1/2000$ separately, utilizing 12 processors. The first 500 iterations are depicted in Figure~\ref{fig: Stokes-Flow-Iteration}. This figure demonstrates that the proposed multilevel preconditioner significantly enhances the convergence rate of the GMRES iterative solver. The three-level preconditioner outperforms the two-level preconditioner, both of which are substantially more effective than the one-level case. This is seen more clearly in Table~\ref{tab: convergence}, which shows the number of GMRES iterations, computing time, and parallel efficiency. 

As shown in Table~\ref{tab: convergence}, the coarsening time is negligible compared to the simulation time. The newly proposed multilevel preconditioner significantly decreases the computing time and enhances parallel efficiency compared to the one-level baseline. Specifically, the three-level preconditioner decreases the number of iterations to approximately $10\%$ of that required by the one-level case, while reducing the computing time to about $30\%$. From the one-level case to the three-level case, the parallel efficiency increased from approximately $70\%$ to over $90\%$ when using $12$ processors. The primary reason for this improvement is that the three-level preconditioner decreases the number of iterations and prevents the number of iterations from escalating as the number of processes increases. 

\begin{figure}[H]
    \centering
    \subfigure[$\nu = 1/500$]{
		\begin{minipage}[]{0.313\linewidth}
		  \centering
            \includegraphics[width=1.0\linewidth]{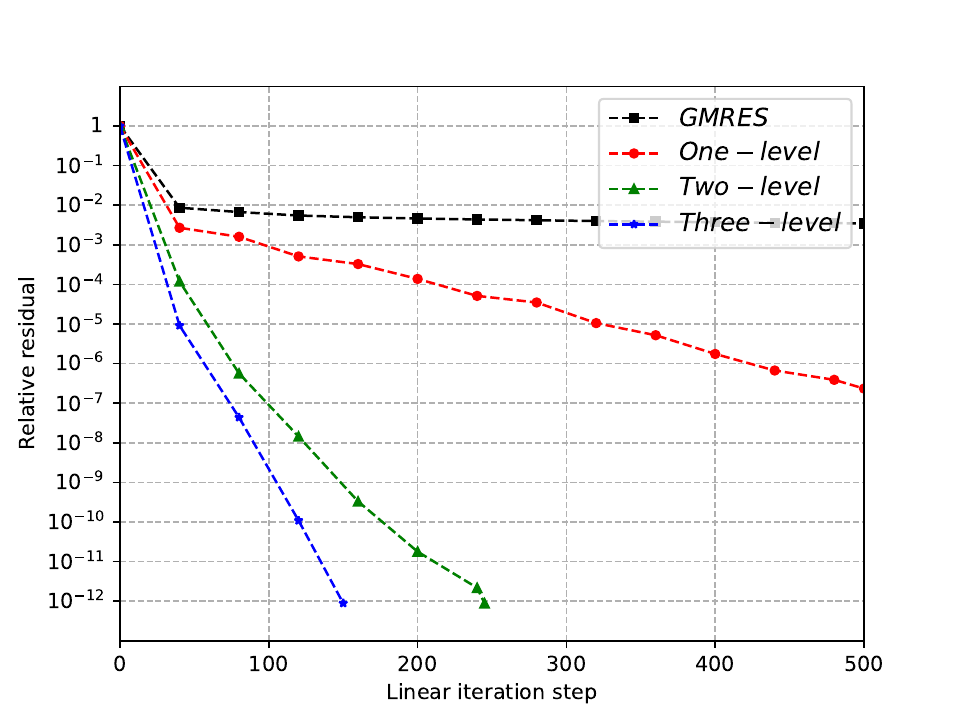}
		\end{minipage}
    } \subfigure[$\nu = 1/1000$]{
		\begin{minipage}[]{0.313\linewidth}
		  \centering
            \includegraphics[width=1.0\linewidth]{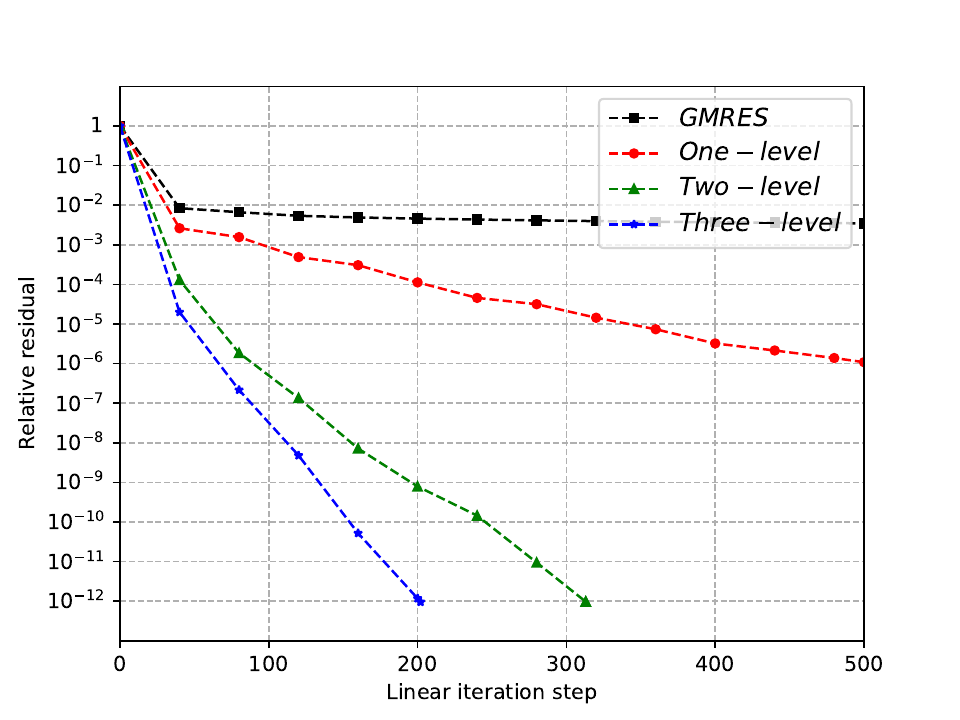}
		\end{minipage}
    } \subfigure[$\nu = 1/2000$]{
		\begin{minipage}[]{0.313\linewidth}
		  \centering
            \includegraphics[width=1.0\linewidth]{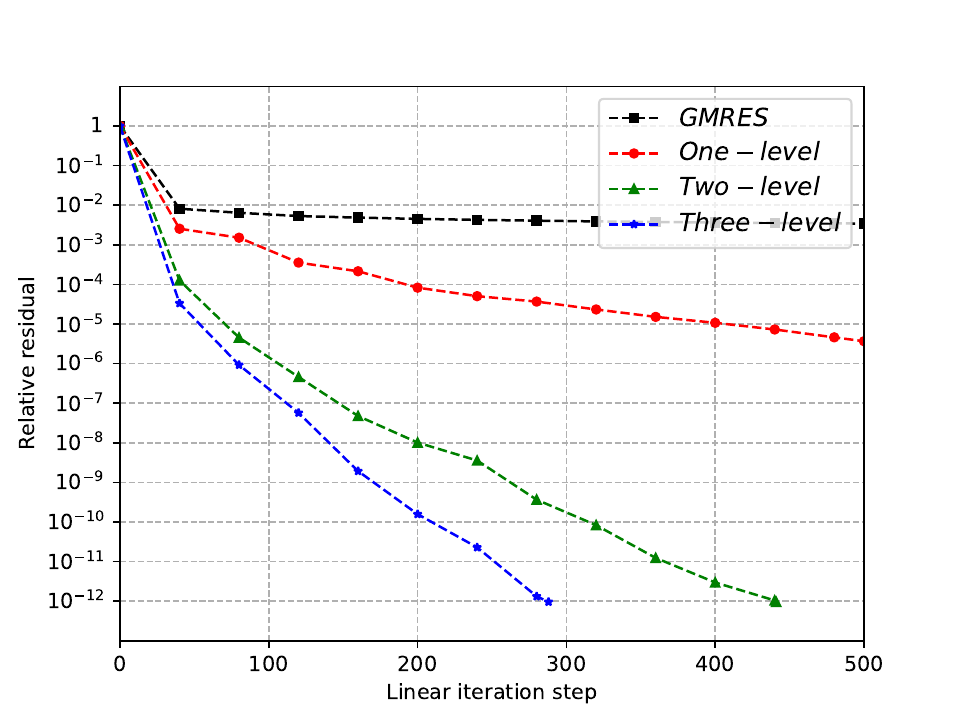}
		\end{minipage}
    }
    \caption{The first 500 iterations of GMRES iterative solver for the $\nu = 1/500$, $\nu = 1/1000$, and $\nu = 1/2000$ cases, respectively. The above results are all computed using 12 processors.}
    \label{fig: Stokes-Flow-Iteration}
\end{figure}

\begin{table}[H]
    \centering
    \caption{The coarsening time using $np$ =  3, 6, 9, and 12 processers, respectively; Comparison of GMRES iterations (denoted as ``$\hbox{N.It}$''), computing time (``Time(s)''), and parallel efficiency of the proposed Schwarz preconditioners with different levels.}
    \setlength{\tabcolsep}{3.8mm}{
    \center
    \begin{tabular} {|c|l|c|c|c|c|}
        \hline
            &$np$ &3 &6 &9 &12 \\\cline{1-6}
        $1_{st}$ coarsening &Time &0.394($100\%$) &0.258($76.36\%$) &0.239($54.95\%$) &0.213($46.24\%$) \\ \hline
        $2_{nd}$ coarsening &Time &0.207($100\%$) &0.151($68.54\%$) &0.139($49.64\%$) &0.138($37.50\%$) \\ \hline
        \multicolumn{6}{l}{$\nu = 1/500$: }\\ \hline
        \multirow{2}{*}{One-level} &$\hbox{N.It}$ &1364 &1324 &1385 &1438 \\\cline{2-6}
		&Time &30.67(100\%)  &16.81(91.23\%) &13.06(78.28\%) &11.35(67.56\%) \\\cline{1-6}
        \multirow{2}{*}{Two-level} &$\hbox{N.It}$ &245 &245 &246 &245 \\\cline{2-6}
		&Time &15.59(100\%)  &8.30(93.92\%) &6.60(78.74\%) &4.97(78.42\%) \\\hline
        \multirow{2}{*}{Three-level} &$\hbox{N.It}$ &160 &154 &151 &150 \\\cline{2-6}
		&Time &13.43(100\%)  &6.93(96.90\%) &4.92(90.99\%) &3.76(89.30\%) \\\hline
        
        \multicolumn{6}{l}{$\nu = 1/1000$: }\\ \hline
        \multirow{2}{*}{One-level} &$\hbox{N.It}$ &2077 &2042 &2140 &2054 \\\cline{2-6}
		&Time(s) &48.94(100\%)  &24.91(98.23\%) &20.89(78.09\%) &15.81(77.39\%) \\\cline{1-6}
        \multirow{2}{*}{Two-level} &$\hbox{N.It}$ &311 &306 &321 &313 \\\cline{2-6}
		&Time(s) &20.27(100\%)  &9.87(102.7\%) &8.48(79.68\%) &5.70(88.90\%) \\\hline
        \multirow{2}{*}{Three-level} &$\hbox{N.It}$ &211 &206 &203 &202 \\\cline{2-6}
		&Time(s) &18.77(100\%)  &8.66(108.4\%) &7.12(87.87\%) &4.84(96.95\%) \\\hline
        
        \multicolumn{6}{l}{$\nu = 1/2000$: }\\ \hline
        \multirow{2}{*}{One-level} &$\hbox{N.It}$ &3306 &3420 &3215 &3236 \\\cline{2-6}
		&Time(s) &77.81(100\%)  &39.57(98.32\%) &31.02(83.61\%) &24.73(78.66\%) \\\cline{1-6}
        \multirow{2}{*}{Two-level} &$\hbox{N.It}$ &426 &429 &431 &441 \\\cline{2-6}
		&Time(s) &28.44(100\%)  &13.52(105.2\%) &11.29(83.97\%) &8.34(85.25\%) \\\hline
        \multirow{2}{*}{Three-level} &$\hbox{N.It}$ &293 &288 &290 &288 \\\cline{2-6}
		&Time(s) &24.82(100\%)  &11.35(109.3\%) &9.51(87.00\%) &6.51(95.31\%) \\\hline
    \end{tabular}
    }
    \label{tab: convergence}
\end{table}

\ 

\subsubsection{3D Case}
Analogously, we consider the 3D case, in which $\Omega$ is the 3D vessel \citep{arterymodel} as shown in Figure~\ref{fig: Stokes-Flow 3D}(a).  In this case, we focus on the following boundary conditions, 
\begin{equation}
    \left \{
    \begin{aligned}
    & \bm u = -10\bm n,\  \texttt{on}\ \ \Gamma_{I},\\
    & \bm u =  \bm 0,\ \ \ \ \ \  \texttt{on}\ \ \Gamma_{W},\\
    & \bm \sigma \cdot \bm n = \bm 0,\ \ \texttt{on}\ \ \Gamma_{O}.
    \end{aligned}
    \right.
\end{equation}
Analogous to the 2D case, the Taylor-Hood element $P_{2}-P_{1}$ \citep{taylor1973numerical} is utilized to discretize this problem, and the DoFs are distributed as shown in Figure~\ref{multiphysics}(d).
Figures~\ref{fig: Stokes-Flow 3D}(b-d) present the computational results for the pressure $p$ and the velocity magnitude $|\bm u|$ of the Stokes flow in the 3D vessel, verifying that the numerical algorithm yields reasonable results. 
Figures~\ref{fig: vessel-coarsening}(a-b), (c-d), and (e-f) represent the fine to coarse meshes $\mathcal{M}_0$, $\mathcal{M}_1$, and $\mathcal{M}_2$, respectively. 
\begin{figure}[H]
    \centering
    \subfigure[domain $\Omega$]{
		\begin{minipage}[]{0.23\linewidth}
		  \centering
            \begin{tikzpicture}
                \node[anchor=south west,inner sep=0] (image) at (0,0) {\includegraphics[width=1.0\textwidth]{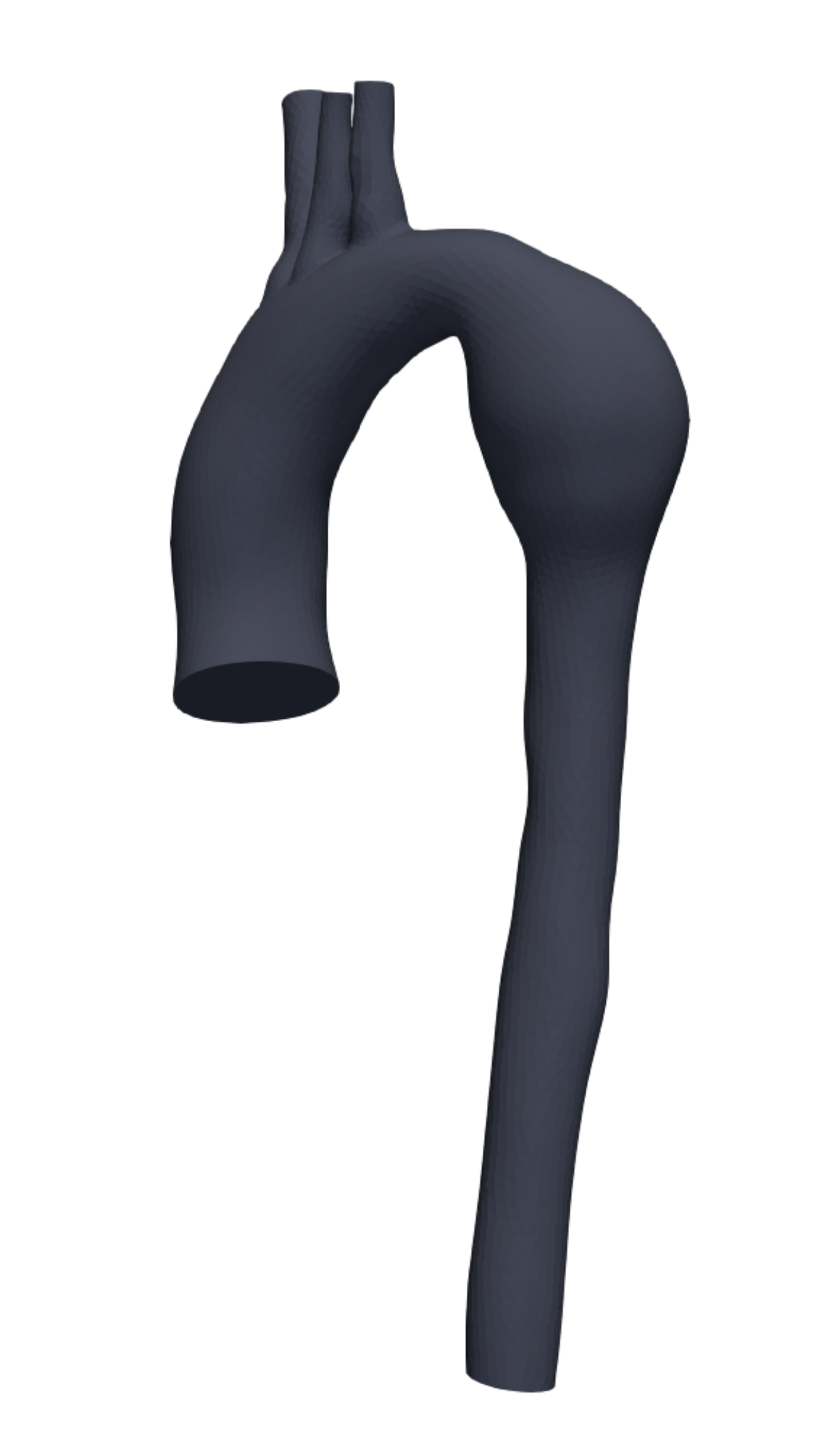}};
                \node at (1.2,3.15) {$\Gamma_{I}$};
                \node at (1.6,6.5) {$\Gamma_{O}$};
                \node at (1.9,0.3) {$\Gamma_{O}$};
                \node at (2.8,5.7) {$\Gamma_{W}$};
            \end{tikzpicture}
        \end{minipage}
    } \subfigure[pressure]{
		\begin{minipage}[]{0.23\linewidth}
		  \centering
		  \includegraphics[width=1.0\linewidth]{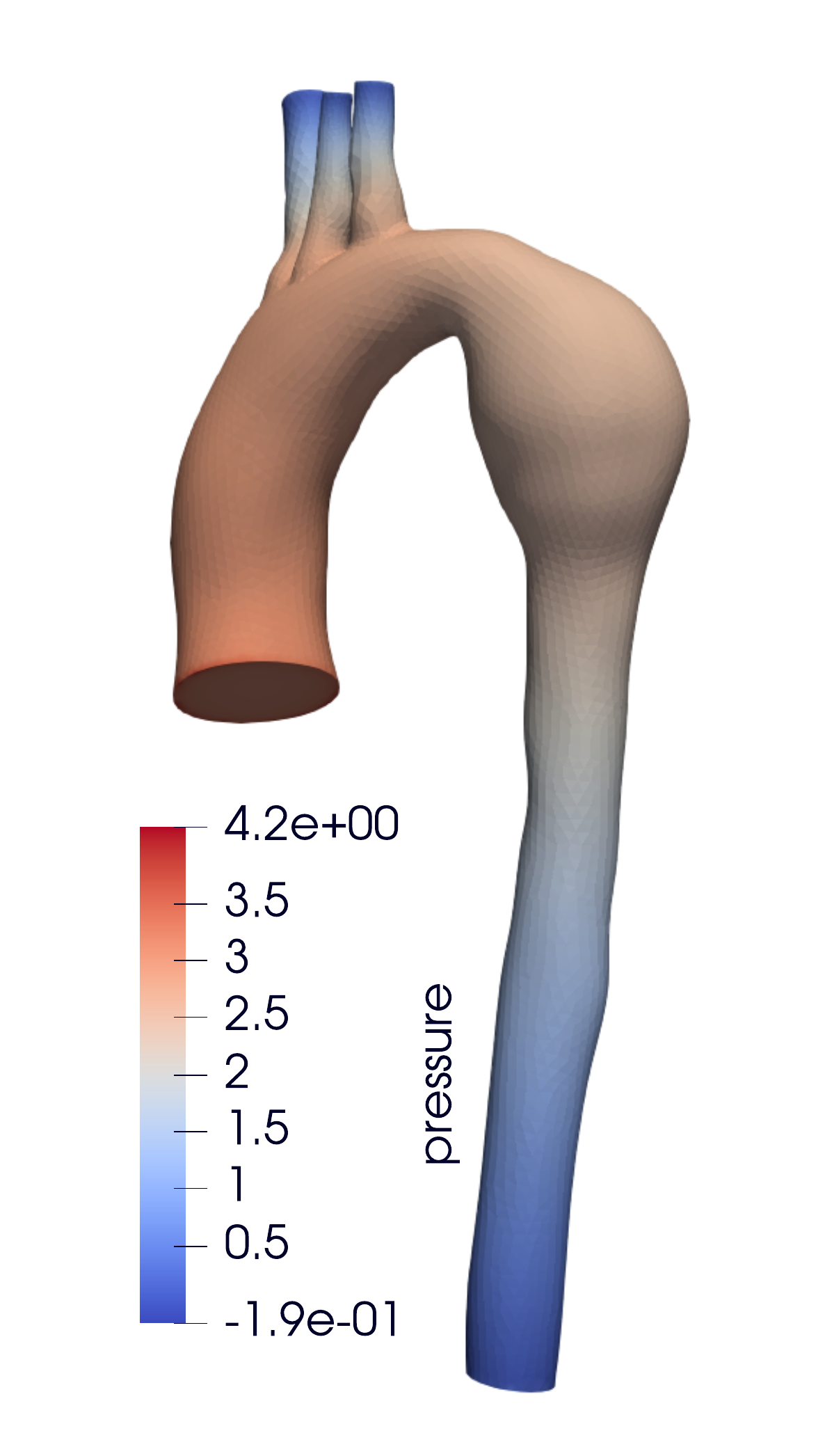}
		\end{minipage}
    } \subfigure[velocity - cut 1]{
		\begin{minipage}[]{0.23\linewidth}
		  \centering
		  \includegraphics[width=1.0\linewidth]{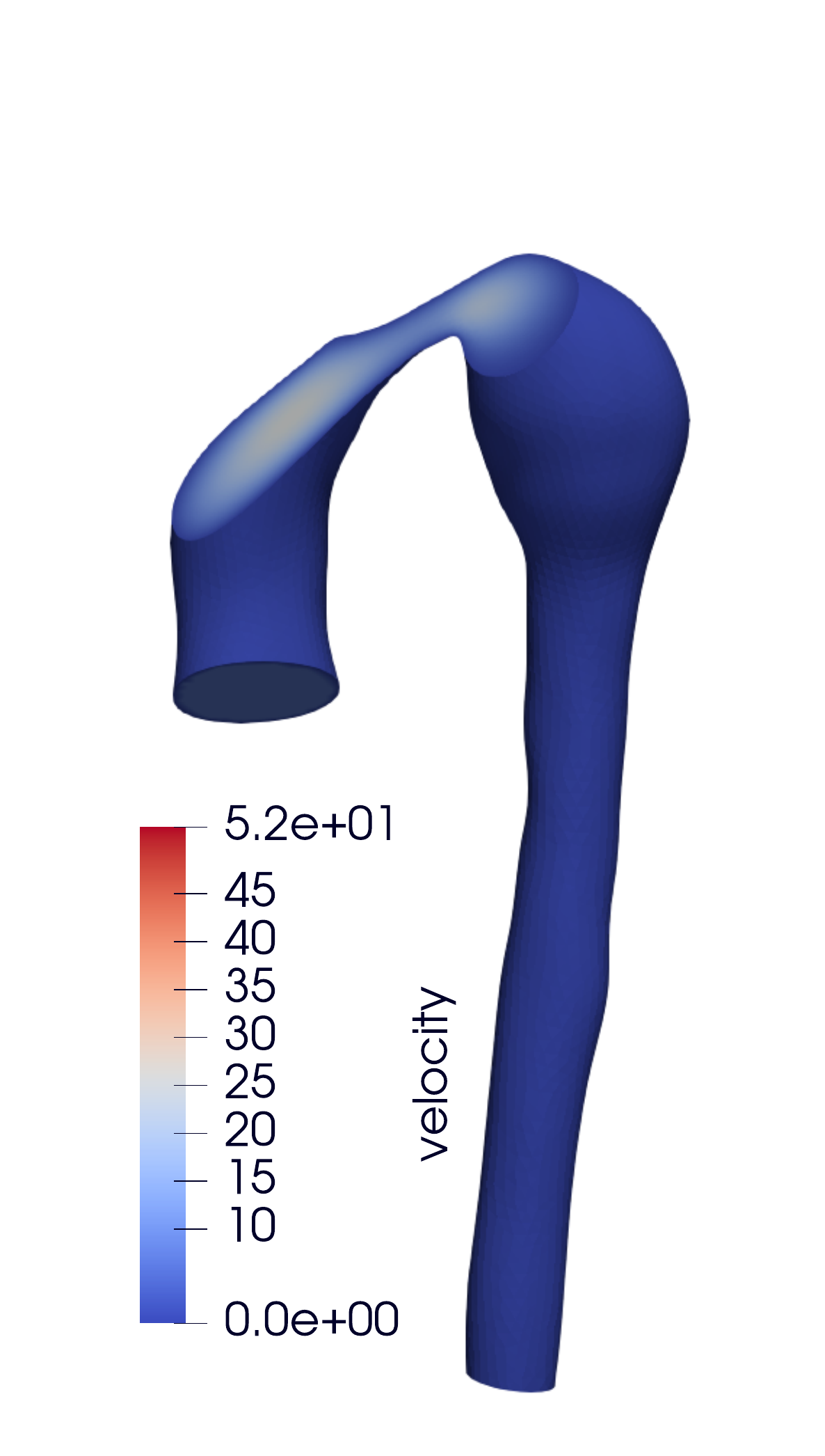}
		\end{minipage}
    } \subfigure[velocity - cut 2]{
		\begin{minipage}[]{0.23\linewidth}
		  \centering
		  \includegraphics[width=1.0\linewidth]{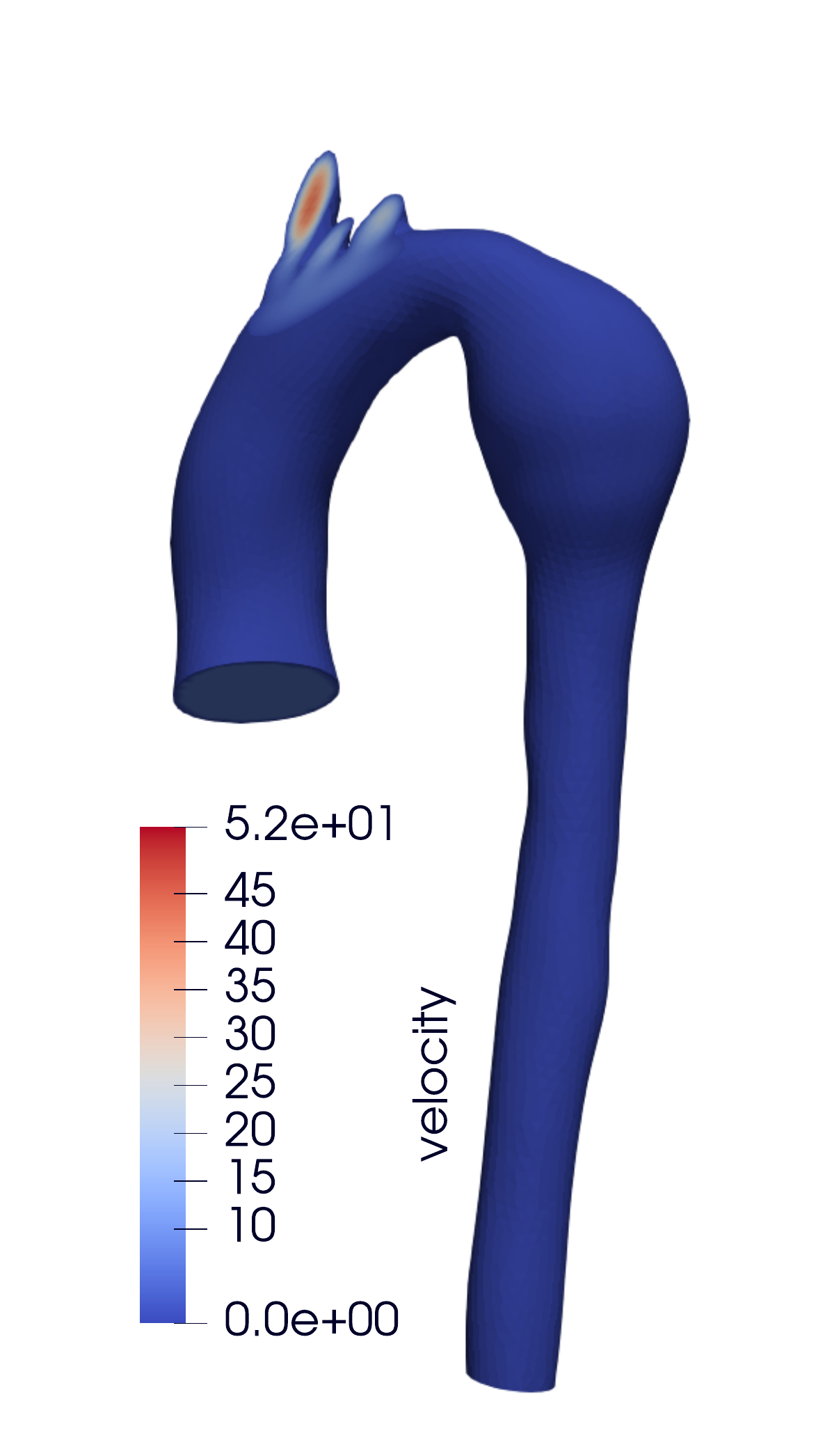}
		\end{minipage}
    } 
    \caption{(a) The 3D vessel for the computation of Stokes flow; (b) The pressure in the computed results for the $\nu = 1/1000$ case; (c) - (d) The magnitude of velocity for the $\nu = 1/1000$ case in the (c) artery, and (d) branches, respectively; }
    \label{fig: Stokes-Flow 3D}
\end{figure}

\begin{figure}[H]
    \centering
    \subfigure[$\mathcal{M}_0$($y < 1.5$)]{
		\begin{minipage}[]{0.135\linewidth}
		  \centering
		  \includegraphics[width=1.0\linewidth]{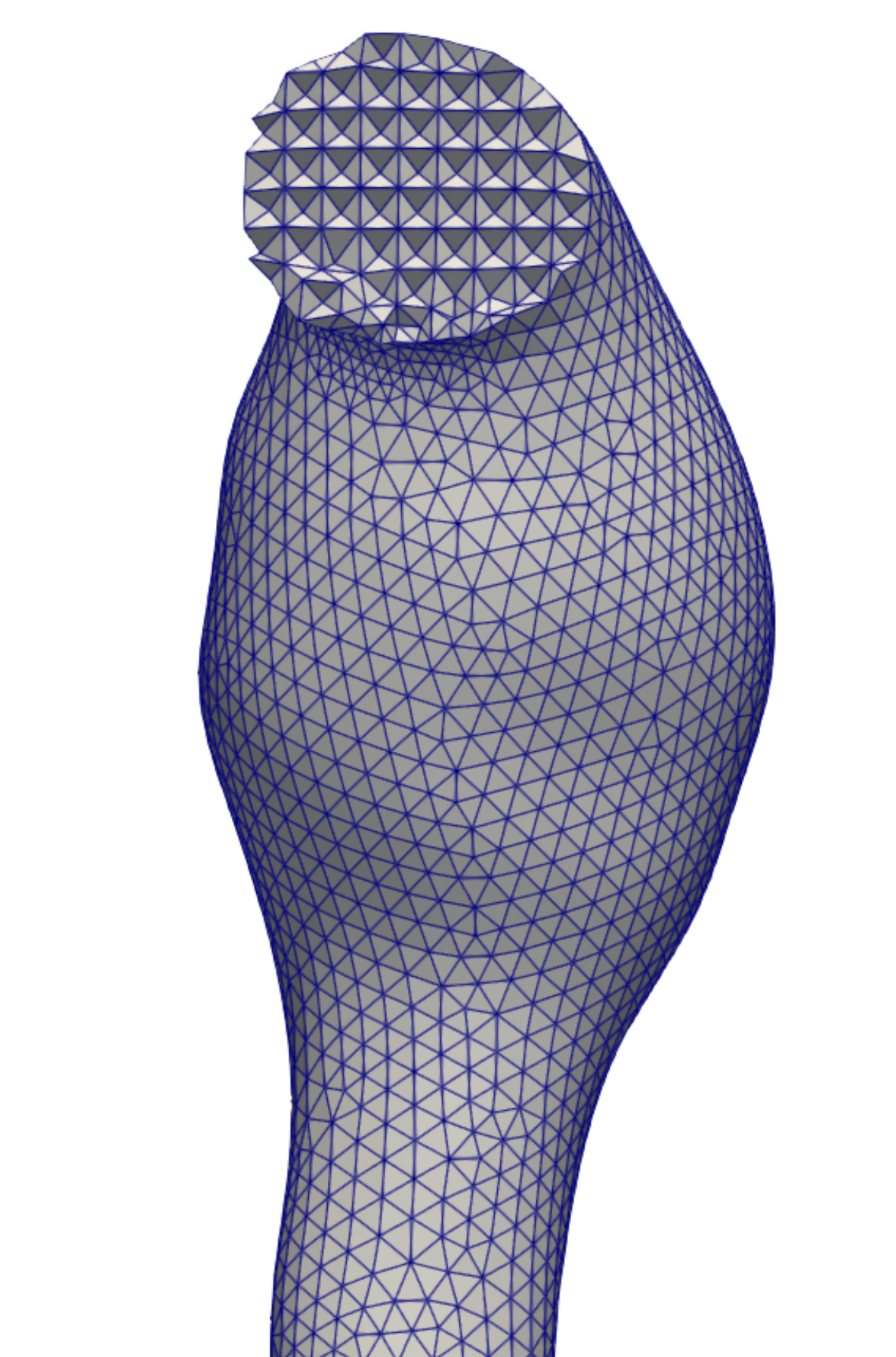}
		\end{minipage}
    } \subfigure[$\mathcal{M}_0$($z < 3.0$)]{
		\begin{minipage}[]{0.135\linewidth}
		  \centering
		  \includegraphics[width=1.0\linewidth]{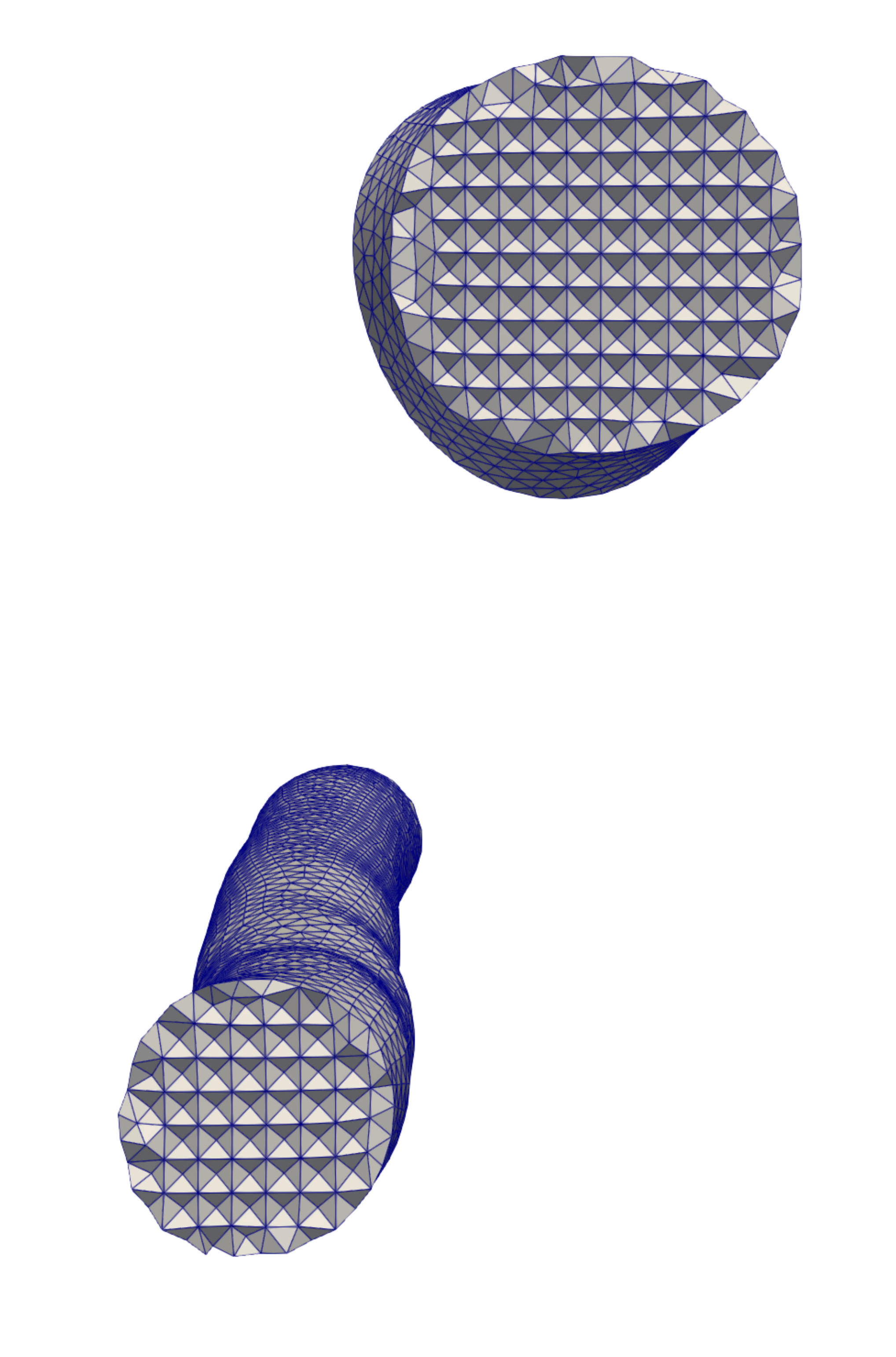}
		\end{minipage}
    } \subfigure[$\mathcal{M}_1$($y < 1.5$)]{
		\begin{minipage}[]{0.135\linewidth}
		  \centering
		  \includegraphics[width=1.0\linewidth]{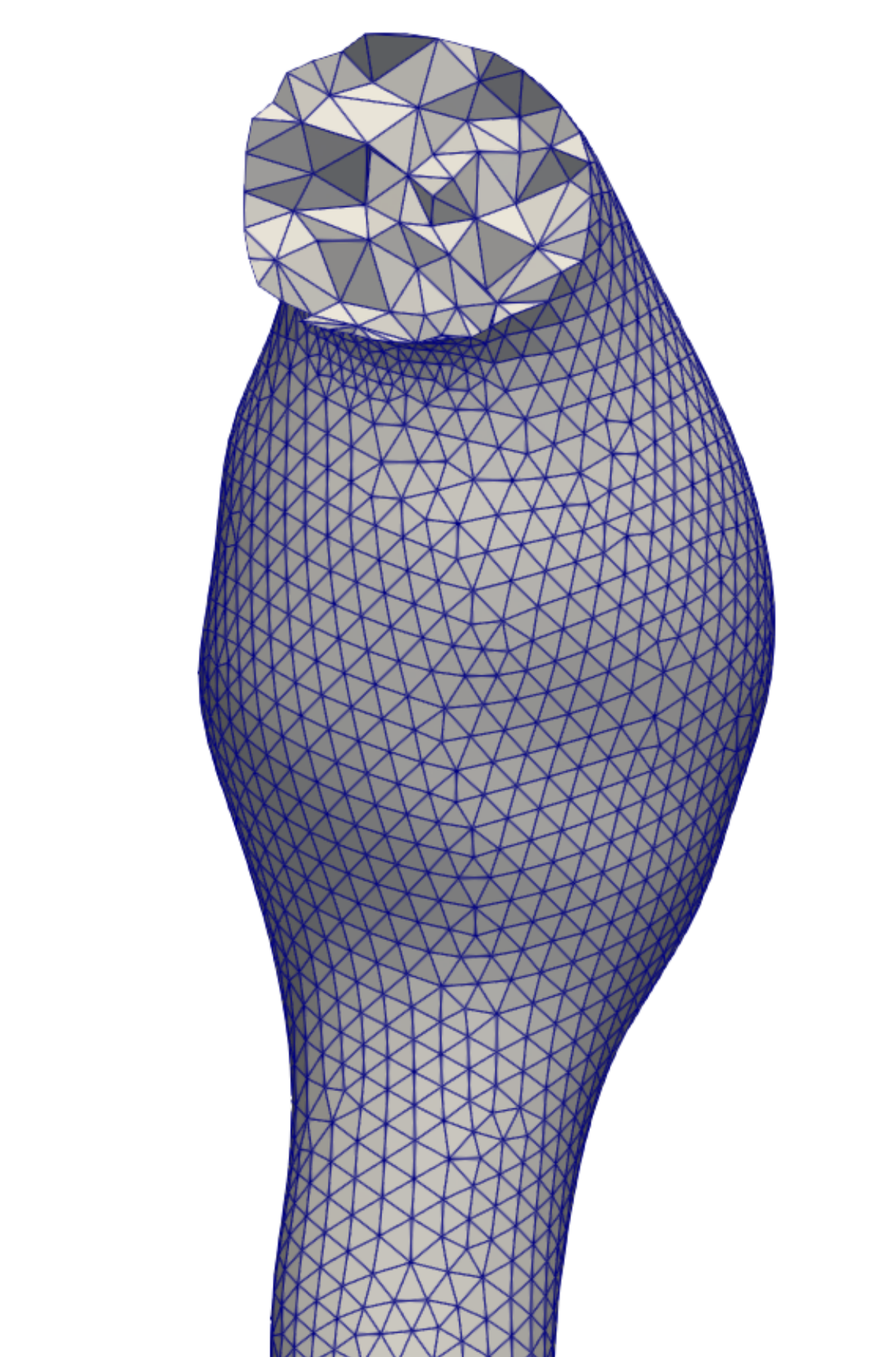}
		\end{minipage}
    } \subfigure[$\mathcal{M}_1$($z < 3.0$)]{
		\begin{minipage}[]{0.135\linewidth}
		  \centering
		  \includegraphics[width=1.0\linewidth]{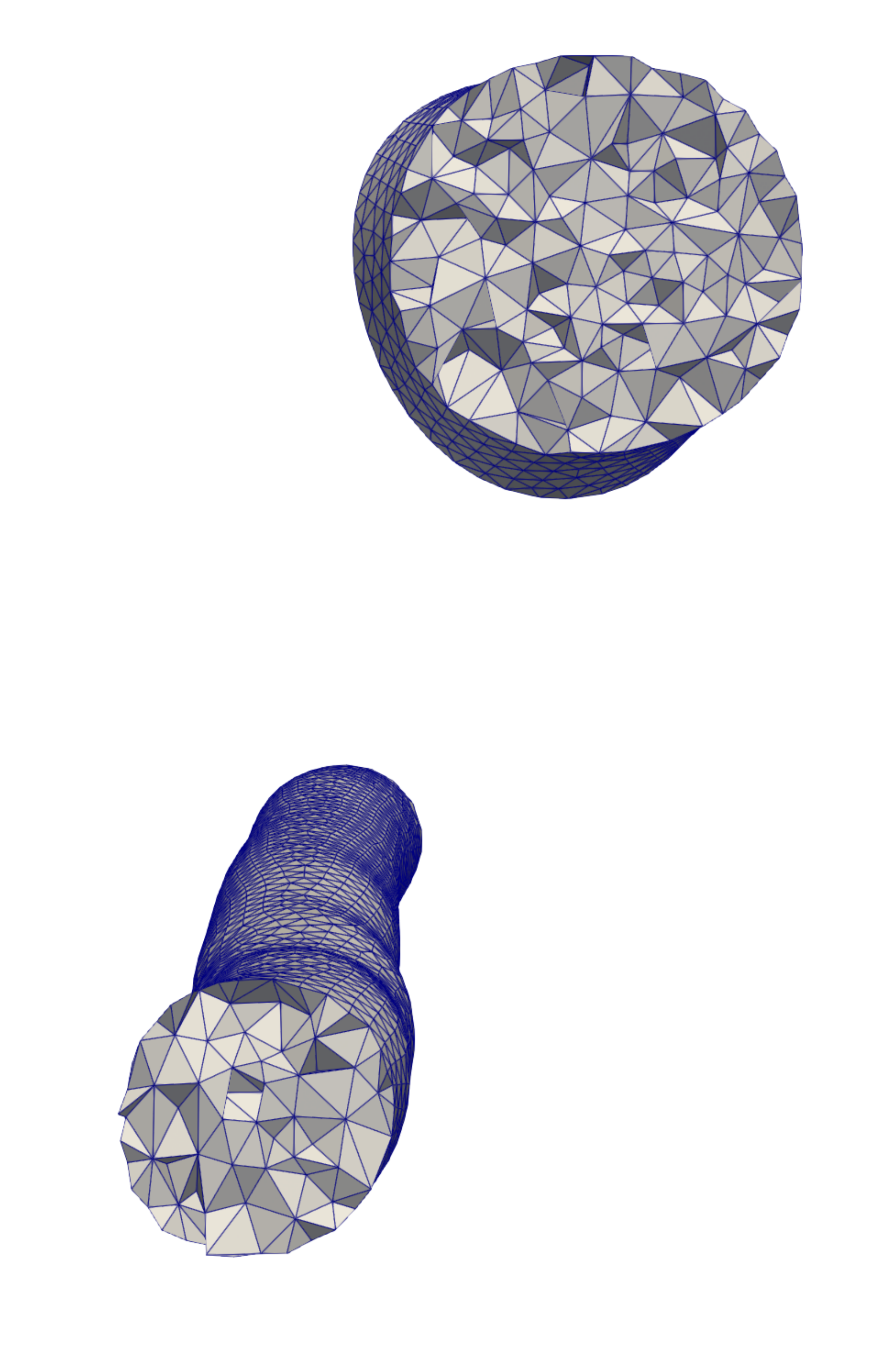}
		\end{minipage}
    } \subfigure[$\mathcal{M}_2$($y < 1.5$)]{
		\begin{minipage}[]{0.135\linewidth}
		  \centering
		  \includegraphics[width=1.0\linewidth]{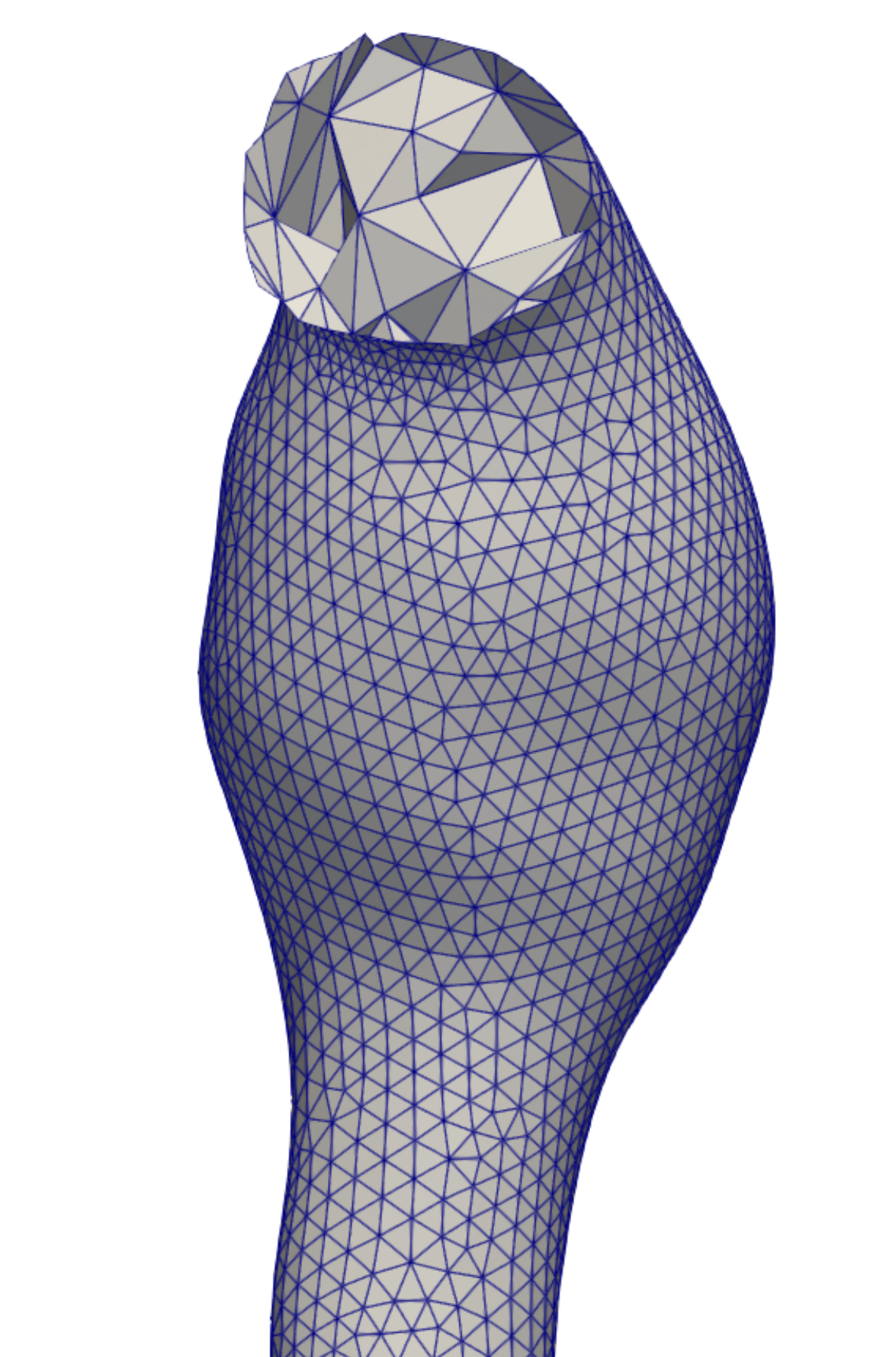}
		\end{minipage}
    } \subfigure[$\mathcal{M}_2$($z < 3.0$)]{
		\begin{minipage}[]{0.135\linewidth}
		  \centering
		  \includegraphics[width=1.0\linewidth]{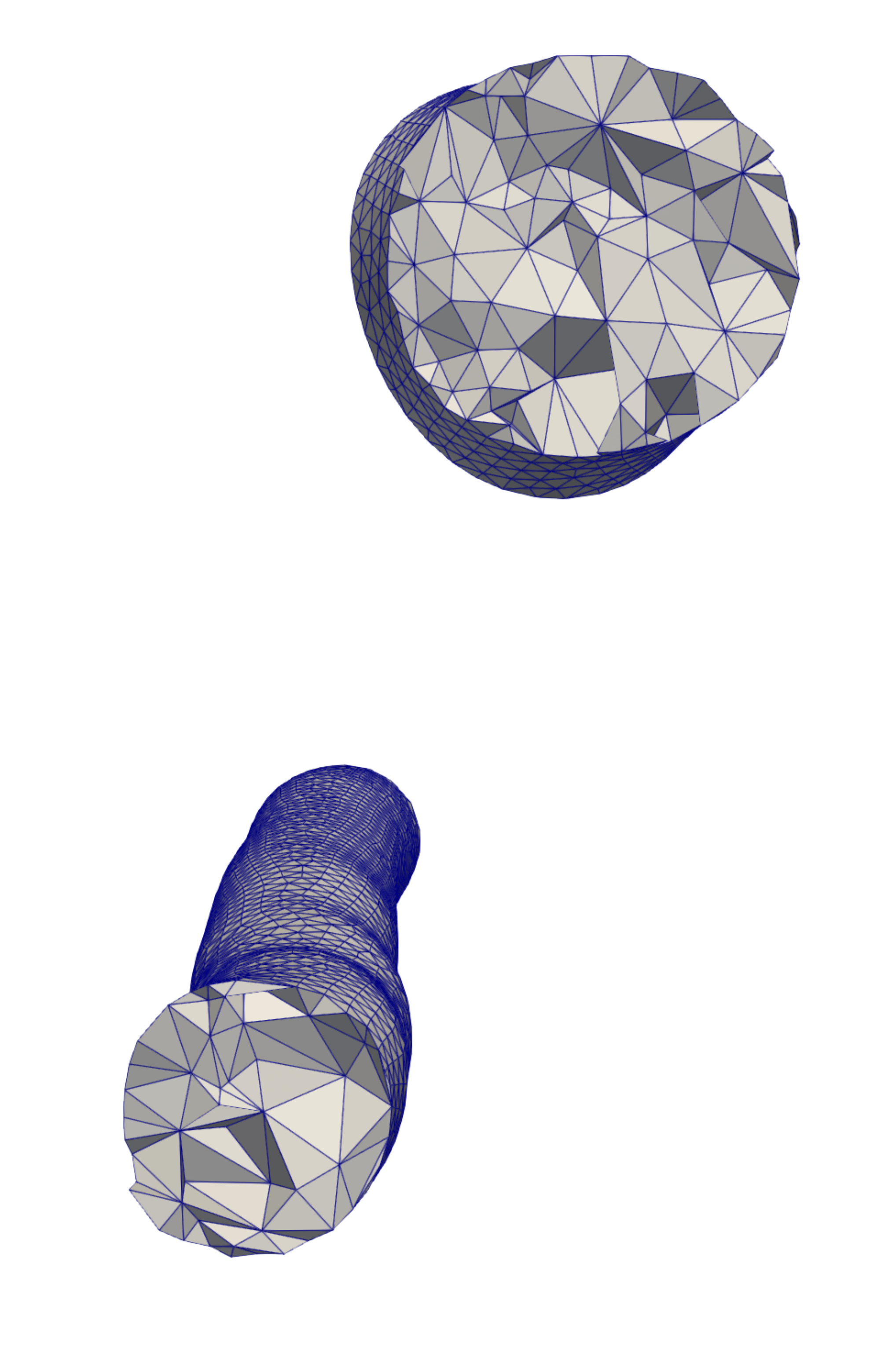}
		\end{minipage}
    } 
    \caption{Cross-sectional views of the meshes $\mathcal{M}_{0}$, $\mathcal{M}_{1}$, and $\mathcal{M}_{2}$ in the y- and z-directions; (a-b) $\mathcal{M}_{0}$: 144,880 elements and 28,193 vertices; (c-d) $\mathcal{M}_{1}$: 80,093 elements and 16,422 vertices; (e-f) $\mathcal{M}_{2}$: 43,869 elements and 10,923 vertices;}
    \label{fig: vessel-coarsening}
\end{figure}

In Figure~\ref{fig: 3DStokes-Flow-Iteration}, we depict the convergence history of the GMRES iterative solver with and without the proposed multilevel preconditioner. These results demonstrate that the proposed multilevel preconditioner significantly decreases the number of GMRES iterations. In this 3D problem, the three-level preconditioner performs slightly better than the two-level preconditioner, both of which are substantially more effective than the one-level case.

Table~\ref{tab: convergence-Stokes-Flow-3D} presents the number of GMRES iterations, computing time, and parallel efficiency for experiments with various $\nu$. Due to the high parallelism of the coarsening algorithm, the coarsening time consistently remains below $1\%$ of the simulation time as the number of processors increases from 3 to 12. An interesting phenomenon observed is that the more challenging the equation is to solve (with a smaller $\nu$), the more apparent the advantages of our multilevel preconditioner become. For $\nu = 1/2000$, the three-level preconditioner decreases the number of iterations to less than $40\%$ of that required by the one-level case, while reducing the computing time to about $70\%$. Moreover, the proposed multilevel preconditioner is highly scalable in the sense that the parallel efficiency does not decrease much when the number of processors is increased, which exhibits a parallel efficiency of $90\%$ for the three-level case at 12 processors. The outstanding parallel efficiency implies that we can utilize more processors to further accelerate the solution of this problem, which is demonstrated in the next subsection.

\begin{figure}[H]
    \centering
    \subfigure[$\nu = 1/500$]{
		\begin{minipage}[]{0.313\linewidth}
		  \centering
		  \includegraphics[width=1.0\linewidth]{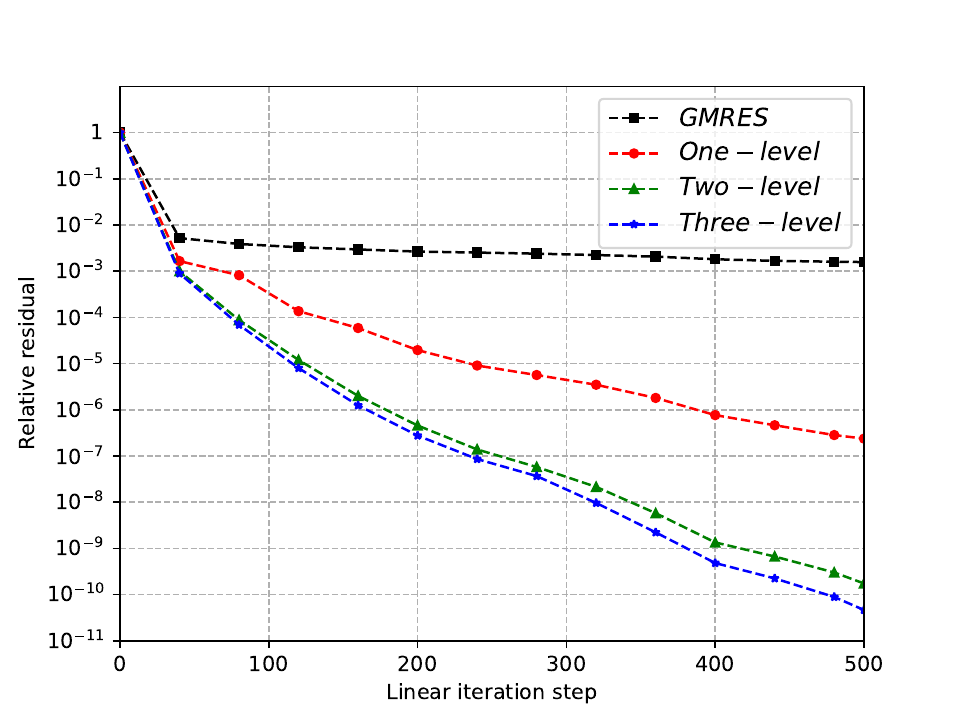}
		\end{minipage}
    } \subfigure[$\nu = 1/1000$]{
		\begin{minipage}[]{0.313\linewidth}
		  \centering
		  \includegraphics[width=1.0\linewidth]{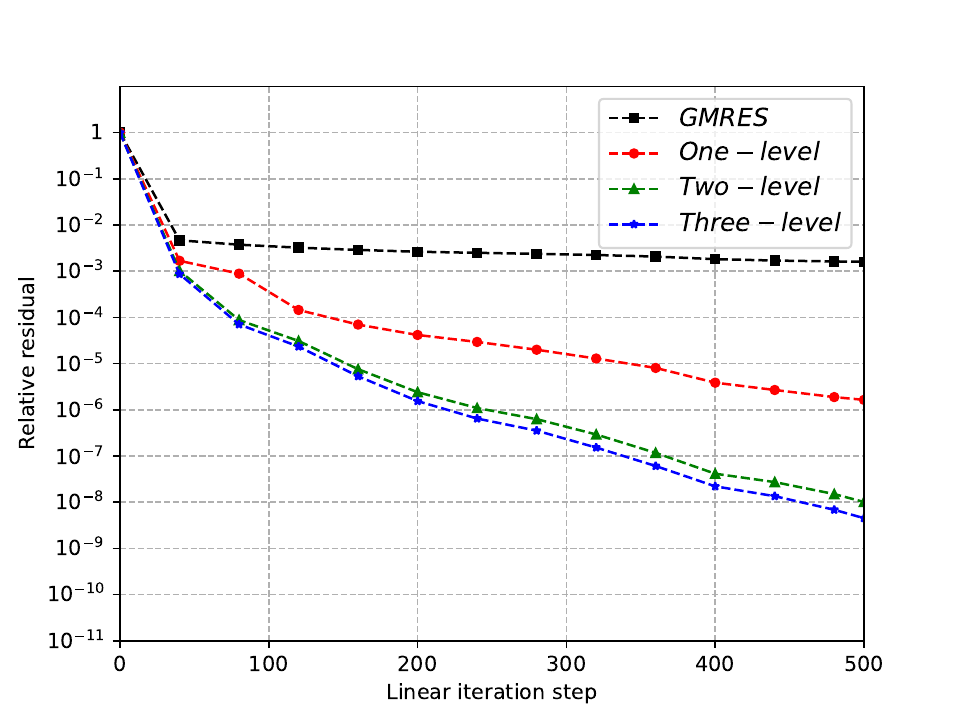}
		\end{minipage}
    } \subfigure[$\nu = 1/2000$]{
		\begin{minipage}[]{0.313\linewidth}
		  \centering
		  \includegraphics[width=1.0\linewidth]{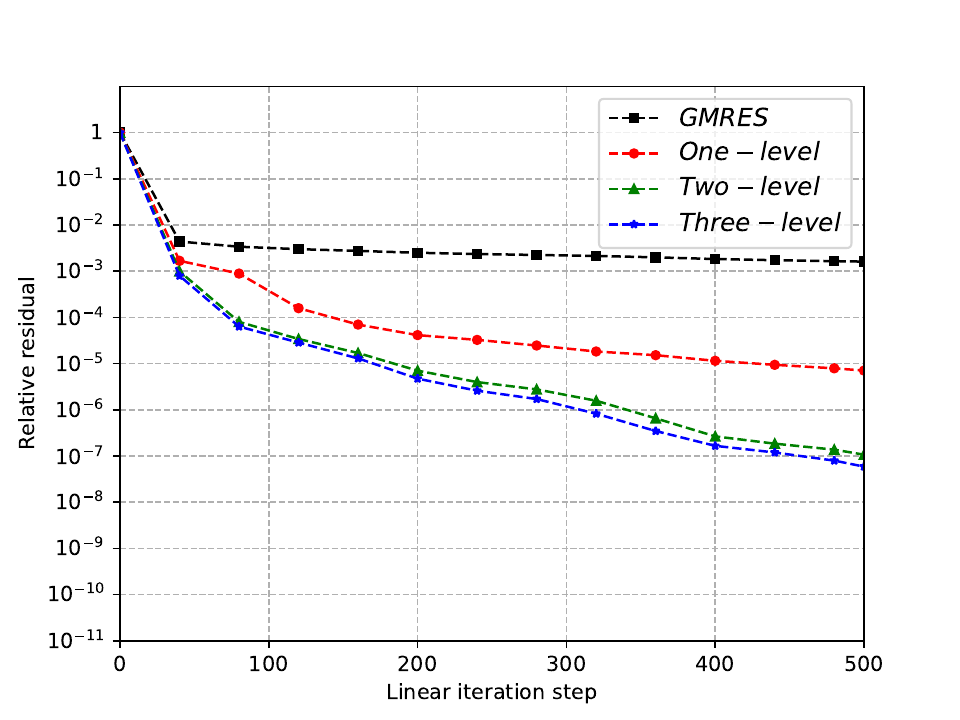}
		\end{minipage}
    }
    \caption{The first 500 iterations of the GMRES iterative solver for the $\nu = 1/500$, $\nu = 1/1000$, and $\nu = 1/2000$ cases, respectively. The above results are all computed using 12 processors. }
    \label{fig: 3DStokes-Flow-Iteration}
\end{figure}

\begin{table}[H]
    \centering
    \caption{The coarsening time for 3D vessel using $np$ = 3, 6, 9, 12 processors, respectively; Comparison of GMRES iterations (denoted as ``$\hbox{N.It}$''), computing time (``Time(s)''), and parallel efficiency of the proposed Schwarz preconditioners with different levels.}
    \setlength{\tabcolsep}{2.6mm}{
    \center
    \begin{tabular} {|c|l|c|c|c|c|}
        \hline
            &$np$ &3 &6 &9 &12 \\\cline{1-6}
        $1_{st}$ coarsening &Time &33.96($100\%$) &18.27($92.94\%$) &12.58($89.98\%$) &9.88($85.93\%$) \\ \hline
        $2_{nd}$ coarsening &Time &21.12($100\%$) &12.49($84.55\%$) &8.81($79.91\%$) &7.08($74.58\%$) \\ \hline
        \multicolumn{6}{l}{$\nu = 1/500$: }\\ \hline
        \multirow{2}{*}{One-level} &$\hbox{N.It}$ &1756 &1643 &1600 &1623 \\\cline{2-6}
		&Time &6111.17(100\%) &3299.29(92.61\%) &2386.04(85.37\%) &1926.23(79.32\%) \\\cline{1-6}
        \multirow{2}{*}{Two-level} &$\hbox{N.It}$ &749 &693 &662 &670 \\\cline{2-6}
		&Time &5925.57(100\%)  &3175.68(93.30\%) &2076.78(95.11\%) &1778.95(83.27\%) \\\hline
        \multirow{2}{*}{Three-level} &$\hbox{N.It}$ &718 &637 &606 &604 \\\cline{2-6}
		&Time &5963.82(100\%) &3083.64(96.70\%) &2009.92(98.91\%) &1666.89(89.45\%) \\\hline
        \multicolumn{6}{l}{$\nu = 1/1000$: }\\ \hline
        \multirow{2}{*}{One-level} &$\hbox{N.It}$ &3013 &2861 &2716 &2752 \\\cline{2-6}
		&Time &10077.0(100\%) &5223.27(96.46\%) &3782.85(88.80\%) &3069.36(82.08\%)\\\cline{1-6}
        \multirow{2}{*}{Two-level} &$\hbox{N.It}$ &1168 &1070 &996 &1031 \\\cline{2-6}
		&Time &9049.07(100\%)  &4693.81(96.39\%) &2953.36(102.1\%) &2562.25(88.29\%) \\\hline
        \multirow{2}{*}{Three-level} &$\hbox{N.It}$ &1104 &986 &925 &937 \\\cline{2-6}
		&Time &9634.09(100\%)  &4915.34(98.00\%) &3207.37(100.1\%) &2501.12(96.30\%) \\\hline
        \multicolumn{6}{l}{$\nu = 1/2000$: }\\ \hline
        \multirow{2}{*}{One-level} &$\hbox{N.It}$ &4531 &4735 &4548 &4649 \\\cline{2-6}
		&Time &14503.3(100\%) &8531.67(85.00\%) &5836.12(82.84\%) &5109.30(70.97\%) \\\cline{1-6}
        \multirow{2}{*}{Two-level} &$\hbox{N.It}$ &1802 &1649 &1539 &1600 \\\cline{2-6}
		&Time &12700.5(100\%)  &6718.97(94.51\%) &4228.81(100.1\%) &3745.97(84.76\%) \\\hline
        \multirow{2}{*}{Three-level} &$\hbox{N.It}$ &1681 &1461 &1357 &1399 \\\cline{2-6}
		&Time &13294.5(100\%)  &6632.56(100.2\%) &4049.31(109.4\%) &3582.78(92.77\%) \\\hline
    \end{tabular}
    }
    \label{tab: convergence-Stokes-Flow-3D}
\end{table}


\ 

\subsection{Parallel scalability}
We then focus on studying the parallel scalability of the proposed multilevel smoothed Schwarz preconditioner in strong scaling senses. Here, the restart value of GMRES is fixed at 200, and the absolute and relative tolerances for the GMRES iterative solver are set to $10^{-6}$. In this experiment, the RAS preconditioner with $\delta = 1$ serves as the smoother on each level. Then, we select ILU(2) as the subdomain solver on the finest mesh, while ILU(0) is utilized on the coarse mesh hierarchies $\mathcal{M}_{i}$ for $i \geq 1$. We take the case of $\nu = 1/1000$ for Stokes flow in 3D vessel (Figure~\ref{fig: Stokes-Flow 3D}) as the benchmark test case. In this simulation, the total number of elements is $1.16 \times 10^{6}$, and the total number of DoFs amounts to $5.21 \times 10^{6}$. Correspondingly, the numbers of elements for coarse mesh hierarchies $\mathcal{M}_{1}$ and $\mathcal{M}_2$ are $8.22 \times 10^{5}$ and $6.12 \times 10^{5}$, respectively. 

Figure~\ref{fig: scaling} illustrates (a) the number of iterations and (b) the total computing time and speedup of the simulation using preconditioners with different levels. The results show that the number of GMRES iterations does not increase, and the total computing time decreases proportionally with the increase in the number of processors, indicating good scalability of the proposed algorithm. Compared to the one-level preconditioner, the two-level and three-level preconditioners provide significant improvements in computing time, number of iterations, and parallel efficiency. Specifically, when the number of processors reaches 1,024, the parallel efficiencies of the one-level, two-level, and three-level simulations are approximately $64.42\%$, $67.32\%$, and $73.25\%$, respectively.

\begin{figure}[H]
    \centering
    \subfigure[Number of iterations]{
		\begin{minipage}[]{0.482\linewidth}
		  \raggedleft
            \includegraphics[width=1.0\linewidth]{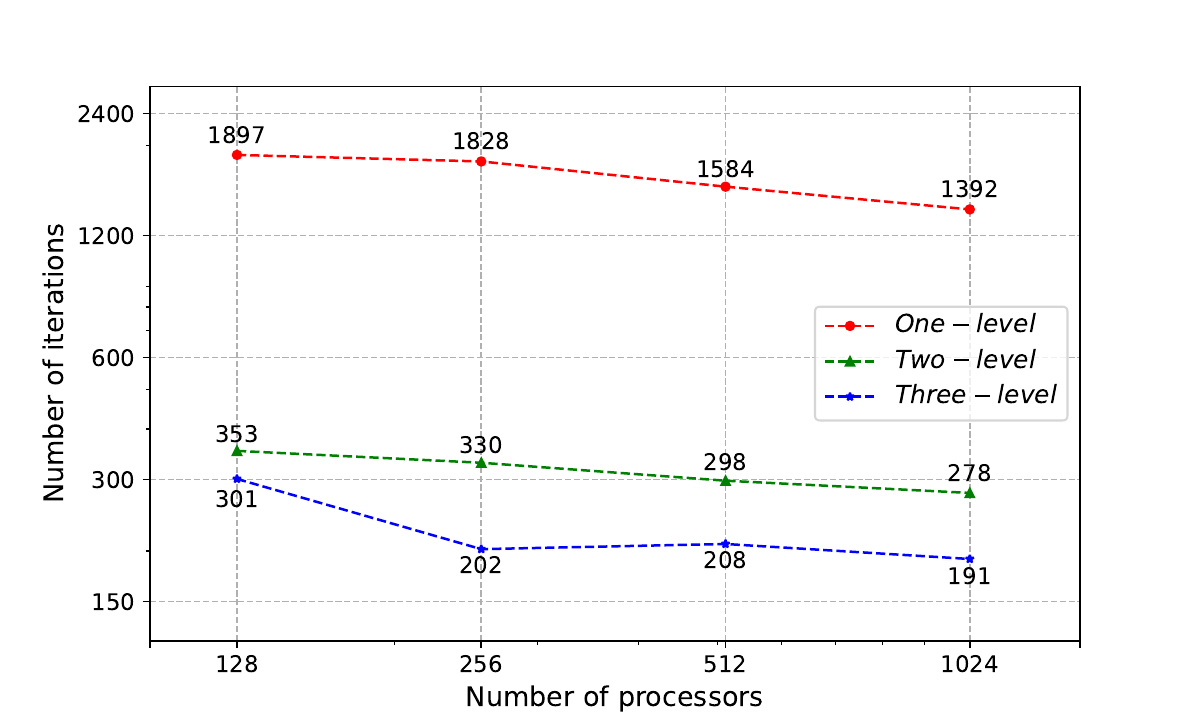}
		\end{minipage}
    } \subfigure[Total computing time and speedup]{
		\begin{minipage}[]{0.482\linewidth}
		  \raggedright
            \includegraphics[width=1.0\linewidth]{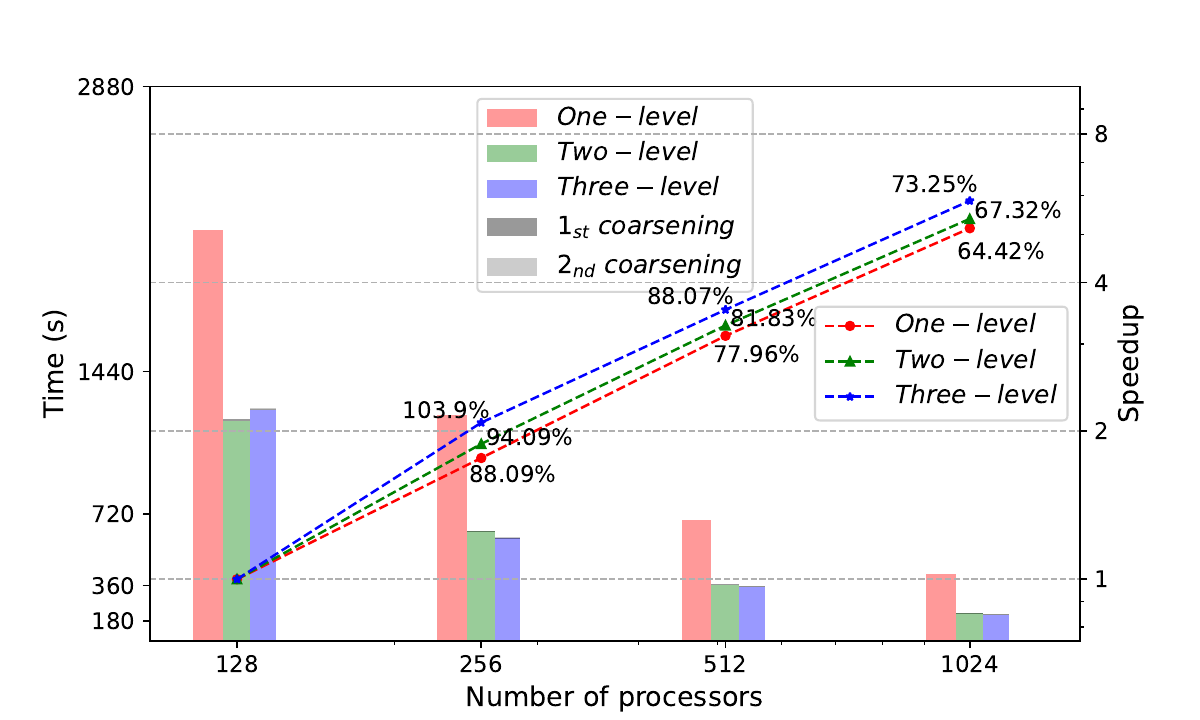}
		\end{minipage}
    }
    \caption{The parallel scalability of the multilevel preconditioner. (a) The number of iterations; (b) The total computing time and speedup; Here, the colors red, green, and blue respectively represent the one-level, two-level, and three-level preconditioners.}
    \label{fig: scaling}
\end{figure}

Table~\ref{tab: scalability-Stokes-Flow-3D} provides a detailed comparison of computing time for each simulation as shown in Figure~\ref{fig: scaling}(b). Notably, the coarsening algorithm scales efficiently from 128 to 1,024 processors, achieving a parallel efficiency of $51.29\%$ for the first coarsening and $55.26\%$ for the second coarsening. Due to the high parallelism of the coarsening algorithm, the coarsening time consistently remains below $1\%$ of the simulation time as the number of processors increases from 128 to 1,024.
\begin{table}[H]
    \centering
    \caption{The computing time (``Time(s)'') and parallel efficiency for each simulation. Here, $np$ = 128, 256, 512, and 1,024 processors are utilized. }
    \setlength{\tabcolsep}{4.7mm}{
    \center
    \begin{tabular} {|c|c|c|c|c|}
        \hline
        $np$ &128 &256 &512 &1,024 \\\cline{1-5}
        \multicolumn{5}{l}{\textbf{Coarsening:} }\\ \hline
        $1_{st}$ coarsening &6.713($100\%$) &3.638($92.26\%$) &2.493($67.32\%$) &1.636($51.29\%$)\\ \hline
        $2_{nd}$ coarsening &5.128($100\%$) &2.874($89.21\%$) &1.703($75.28\%$) &1.160($55.26\%$)\\ \hline
        \multicolumn{5}{l}{\textbf{Solving:} }\\ \hline
		One-level &2153.05($100\%$) &1222.08($88.09\%$) &690.47($77.96\%$) &417.74($64.43\%$) \\\hline
        Two-level &1191.50($100\%$) &633.07($94.10\%$) &363.58($81.93\%$) &220.86($67.44\%$) \\\hline
		Three-level &1244.16($100\%$) &597.76($104.1\%$) &352.32($88.28\%$) &211.54($73.52\%$) \\\hline
    \end{tabular}
    }
    \label{tab: scalability-Stokes-Flow-3D}
\end{table}

\section{Conclusions}
\label{sec: conclusions}
In this work, we systematically discussed the non-nested unstructured scenario of the parallel multilevel Schwarz preconditioner. The main contributions of this work include two components:
\begin{itemize}
    \item A new parallel mesh coarsening algorithm was proposed, which preserves geometric features while demonstrating excellent robustness.
    \item A new parallel interpolation operator for multiphysics problems on domain-decomposed non-nested meshes was designed. This parallel interpolation operator can be applied to various discrete stencils.
\end{itemize}
By integrating the above two algorithms into the V-cycle framework, a new multilevel smoothed Schwarz preconditioner was proposed. Several numerical simulations demonstrated that the proposed multilevel preconditioner exhibits excellent performance in large-scale unstructured mesh scenarios, achieving high convergence rates and effectively scaling up to one thousand processors. In the future, we plan to conduct further theoretical analysis of the proposed multilevel preconditioner.

\bibliographystyle{elsarticle-num-names} 
\bibliography{cas-refs}





\end{document}